% ****** Start of file aipsamp.tex ******
%
%   This file is part of the AIP files in the AIP distribution for REVTeX 4.
%   Version 4.1 of REVTeX, October 2009
%
%   Copyright (c) 2009 American Institute of Physics.
%
%   See the AIP README file for restrictions and more information.
%
% TeX'ing this file requires that you have AMS-LaTeX 2.0 installed
% as well as the rest of the prerequisites for REVTeX 4.1
% 
% It also requires running BibTeX. The commands are as follows:
%
%  1)  latex  aipsamp
%  2)  bibtex aipsamp
%  3)  latex  aipsamp
%  4)  latex  aipsamp
%
% Use this file as a source of example code for your aip document.
% Use the file aiptemplate.tex as a template for your document.
\documentclass[%
% aip,
 jmp,
% bmf,
% sd,
% rsi,
 amsmath,amssymb,
%preprint,
reprint, onecolumn %%%%% Comment onecolumn to override one column format %%%%%%%
%author-year,%
%author-numerical,%
% Conference Proceedings
]{revtex4-1}

\usepackage{graphicx}% Include figure files
\usepackage{dcolumn}% Align table columns on decimal point
\usepackage{bm}% bold math
%\usepackage[mathlines]{lineno}% Enable numbering of text and display math
%\linenumbers\relax % Commence numbering lines

\usepackage[utf8]{inputenc}
\usepackage[T1]{fontenc}
\usepackage{mathptmx}
\usepackage{etoolbox}
\usepackage{amsmath, amsthm, amsfonts, amssymb}

%% Apr 2021: AIP requests that the corresponding 
%% email to be moved after the affiliations
\makeatletter
\def\@email#1#2{%
 \endgroup
 \patchcmd{\titleblock@produce}
  {\frontmatter@RRAPformat}
  {\frontmatter@RRAPformat{\produce@RRAP{*#1\href{mailto:#2}{#2}}}\frontmatter@RRAPformat}
  {}{}
}%
\makeatother

\newcommand\Ex{{\mathbb E}}

\newcommand\Prob{{\mathbb P}}
\newcommand\chitilde{\tilde{\chi}}
\newcommand\Normal{{\mathcal N}}

%mathcal c+

\newcommand\cF{{\mathcal F}}

\newcommand\cL{{\mathcal L}}
\newcommand\cM{{\mathcal M}}

\newcommand\cP{{\mathcal P}}

%mathbb

\newcommand\R{{\mathbb R}}

\newcommand\C{{\mathbb C}}

%mathfrank

%mathbf

\newcommand\dto{\overset{d}{\to }}
\newcommand\wto{\overset{w}{\to }}

\newcommand\norm[1]{\|#1\|}

\newcommand\bra[1]{\langle #1 \rangle}

\newcommand\as{\text{ as }}

\DeclareMathOperator{\Gam}{Gamma}

\DeclareMathOperator{\Beta}{Beta}

\DeclareMathOperator{\supp}{supp}

\newtheorem{theorem}{Theorem}[section]

\newtheorem{lemma}[theorem]{Lemma}
\newtheorem{proposition}[theorem]{Proposition}

\theoremstyle{definition}
\newtheorem{definition}[theorem]{Definition}

\theoremstyle{remark}
\newtheorem{remark}[theorem]{Remark}

\begin{document}

%\preprint{AIP/123-QED}

\title[Classical beta ensembles and the Markov--Krein transform]{Classical beta ensembles and related eigenvalues processes at high temperature and the Markov--Krein transform}
% Force line breaks with \\
\author{F. Nakano}
 %\email{fumihiko.nakano.e4@tohoku.ac.jp.}
 \affiliation{Mathematical Institute, Tohoku University, Sendai,  Japan.}%Lines break automatically or can be forced with \\

\author{H.D. Trinh}%
 %\email{thdung.hus@gmail.com.}
\affiliation{Faculty of Mathematics Mechanics Informatics, University of Science, Vietnam National University, Hanoi, Vietnam.%\\This line break forced with \textbackslash\textbackslash
}%

\author{K.D. Trinh}
\email{trinh@aoni.waseda.jp}
 %\homepage{http://www.Second.institution.edu/~Charlie.Author.}
\affiliation{%
Global Center for Science and Engineering, Waseda University, Japan.%\\This line break forced% with \\
}%

\date{\today}% It is always \today, today,
             %  but any date may be explicitly specified

\begin{abstract}
The aim of this paper is to identify the limit in a high temperature regime of classical beta ensembles on the real line and related eigenvalue processes by using the Markov--Krein transform. We show that the limiting measure of Gaussian beta ensembles (resp.\ beta Laguerre ensembles and beta Jacobi ensembles) is the inverse Markov--Krein transform of the Gaussian distribution (resp.\ the gamma distribution and the beta distribution). At the process level, we show that the limiting probability measure-valued process is the inverse Markov--Krein transform of a certain $1$d stochastic process. 
\end{abstract}

\maketitle

\section{Introduction}
A probability measure $\nu$ on $\R$ and a probability measure $\mu$ on $\R$ with 
\(
	\int_\R \log(1 + |u|) d\mu(u) < \infty,
\)
are linked by the Markov--Krein relation (MKR) with parameter $c > 0$ if, for $z \in \C \setminus \R$,
\begin{equation}
	\int_\R \frac{1}{(z - t)^c} d\nu(t) = \exp\left(	- c \int_\R \log(z - u) d\mu(u)\right).
\end{equation}
Here the function $\log z$  and the function $z^\alpha$, for $\alpha \in \R$, are defined on $\C \setminus (-\infty, 0]$ as follows: if $z = r e^{i \theta}$, with $r > 0, -\pi < \theta < \pi$, then 
\[
	\log z = \log r + i \theta, \quad	z^{\alpha} = e^{\alpha \log z} = r^{\alpha} e^{i \alpha \theta}.
\]
Note that the relation between $\nu$ and $\mu$ uniquely determines each other \cite{Mergny-Potters-2022}. We call $\nu$ the Markov--Krein transform (MKT) of $\mu$ with parameter $c$ and write $\nu = \cM_{c}(\mu)$, and $\mu$ the inverse Markov--Krein transform (IMKT) of $\nu$ and write $\mu =\cM_{c}^{-1}(\nu)$. The MKT is related to Dirichlet processes \cite{CR90, Jam05}, representations of the symmetric group \cite{Ker03} and the Markov moment problem \cite{KN77}, to name a few. The case $c=1$ was intensively studied in a seminal work of Kerov \cite{Kerov-1988} in which the measure $\mu$ needs not be positive.

A motivated example for this work is related to the Dirichlet distribution. Let $w = (w_1, \dots, w_N)$ have the Dirichlet distribution with parameters $\tau = (\tau_1, \dots, \tau_N),  (\tau_i > 0)$, that is, a random vector $w = (w_1, \dots, w_N)$ belonging to the standard $(N-1)$ simplex, 
\(
	 \{w = (w_1, \dots, w_N) : \sum_{i=1}^N w_i = 1,  (w_i \ge 0)\},
\)
has the following joint probability density function with respect to the Lebesgue measure on $\R^{N-1}$
\[
	\frac{\Gamma(\tau_1 + \cdots + \tau_N)}{\prod_{i=1}^N \Gamma(\tau_i)}  \prod_{i=1}^N w_i^{\tau_i - 1}.
\]
For $a = (a_1, \dots, a_N) \in \R^N$, let $\nu$ be the distribution of $(w_1 a_1 + \cdots + w_N a_N)$, and $\mu = \frac{1}{|\tau|}\sum_{i=1}^N \tau_i \delta_{a_i}$ be a probability measure on $\R$. Here $|\tau| = \tau_1 + \cdots + \tau_N$ and $\delta_\lambda$ denotes the Dirac measure. Then $\nu$ and $\mu$ are linked by the MKR with parameter $c = |\tau|$, that is, 
\begin{equation}
	\int_\R \frac{1}{(z - t)^{c}} d\nu(t) = \Ex[(z - (w_1 a_1 + \cdots + w_N a_N))^{-c}] =  \prod_{i=1}^N \left(\frac1{z - a_i}\right)^{\tau_i}.
\end{equation}
The proof of the above relation can be found in \cite{Fourati-2011}.

In a particular case when $w = (w_1, \dots, w_N)$ has the symmetric Dirichlet distribution with parameter $c/N$ (that is, the Dirichlet distribution with parameters $(\frac{c}N, \dots, \frac cN)$), the measure $\mu$ coincides with the empirical distribution of $a$, 
\(
	\mu = L_N =  \frac1N \sum_{i=1}^N \delta_{a_i}.
\)
We call a random probability measure 
\(
	\nu_N = \sum_{i=1}^N w_i \delta_{a_i}
\)
the spectral measure of $a$. Then the above MKR can be expressed as
\begin{equation}\label{MKR-spectral-measure}
	\Ex[(z - \bra{\nu_N, x})^{-c}] = \exp(-c \bra{L_N, \log(z - x)}), \quad z \in \C \setminus \R.
\end{equation}
Here $\bra{\mu, f}$ denotes the integral $\int f d\mu$ of an integrable function $f$ with respect to a probability measure $\mu$.

Next we see how that example is related to classical beta ensembles at a high temperature regime. Three classical beta ensembles on the real line (Gaussian beta ensembles, beta Laguerre ensembles and beta Jacobi ensembles)  are families of joint probability density functions proportional to 
\begin{equation}\label{joint-density}
	 \prod_{i < j}|\lambda_j - \lambda_i|^\beta \prod_{l = 1}^N w(\lambda_l) ,\quad (\beta > 0),
\end{equation}
with weights $w$ given by
\begin{equation}\label{weights}
	w(\lambda) = 
	\begin{cases}
		e^{-\lambda^2/2},\quad &\lambda \in (-\infty, \infty), \quad \text{[Gaussian],}\\
		\lambda^{\alpha-1}e^{-\lambda}, \quad &\lambda \in (0, \infty),\quad (\alpha > 0),\quad \text{[Laguerre],}\\
		\lambda^a (1 - \lambda)^b, \quad &\lambda \in (0,1), \quad(a>-1,b>-1),\quad \text{[Jacobi]}.
	\end{cases}
\end{equation}
The parameter $\beta > 0$ is viewed as the inverse temperature parameter. Note that the weights are proportional to the density of the standard Gaussian distribution, the gamma distribution $\Gam(\alpha, 1)$ and the beta distribution $\Beta(a+1, b+1)$, respectively.
In a high temperature regime where $\beta = 2c/N$ with $c \in (0, \infty)$ being given, it is known that the empirical distribution of the eigenvalues $\{\lambda_i\}_{i=1}^N$,
\[
	L_N = \frac1N \sum_{i=1}^N \delta_{\lambda_i}
\] 
converges weakly to a probability measure $\mu_c$, almost surely. Equivalently, for any bounded continuous function $f$, 
\[
	\bra{L_N, f} \to \bra{\mu_c, f}, \quad \text{almost surely as } N \to \infty.
\]
The convergence also holds for continuous functions of polynomial growth \cite{Trinh-Trinh-Jacobi, Trinh-Trinh-2021, Trinh-2019}. 
The limiting probability measures  in the Gaussian, Laguerre and Jacobi cases are related to associated Hermite, Laguerre and Jacobi polynomials, respectively \cite{DS15, Trinh-Trinh-Jacobi, Trinh-Trinh-2021}.

The three classical beta ensembles are now realized as the eigenvalues of certain tridiagonal random matrices for any $\beta > 0$ \cite{DE02, Killip-Nenciu-2004}. That is to say, there are random matrices $J_N$ of the form
\[
	J_N = \begin{pmatrix}
		a_1	&b_1\\
		b_1	&a_2		&b_2\\
		&\ddots	&\ddots	&\ddots\\
		&&b_{N-1}	&a_N
	\end{pmatrix},\quad a_i \in \R, b_j > 0,
\]
called Jacobi matrices, such that their eigenvalues are distributed according to the joint density~\eqref{joint-density}.
The spectral measure of $J_N$ is defined to be the unique probability measure $\nu_N$ on $\R$ such that 
\[
	\bra{\nu_N, x^k} = (J_N)^{k}(1,1), \quad k = 0,1,\dots.
\]
It turns out that $\nu_N$ has a simple expression 
\[
	\nu_N = \sum_{i=1}^N w_i \delta_{\lambda_i},
\]
where $w_i = v_i(1)^2$ with $\{v_i\}_{i=1}^N$ the corresponding normalized eigenvectors of $J_N$. For all three ensembles, the vector of weights $w = (w_1, \dots, w_N)$ has the symmetric Dirichlet distribution with parameter $\beta/2 = c/N$, and is independent of the eigenvalues $(\lambda_1, \dots, \lambda_N)$.

Because of the independence, the relation~\eqref{MKR-spectral-measure} implies that
\begin{equation}\label{MKR-classical}
\Ex[(z - \bra{\nu_N, x})^{-c}] = \Ex[ \exp(-c \bra{L_N, \log(z - x)})], \quad z \in \C \setminus \R.
\end{equation}
Note that $\bra{\nu_N, x} = a_1 = a_1^{(N)}$. Here we add the super-script to indicate the dependence on $N$. Then, we get the following relation for all three classical beta ensembles
\begin{equation}\label{MKR-classical2}
\Ex[(z - a_1^{(N)})^{-c}] = \Ex[ \exp(-c \bra{L_N, \log(z - x)})], \quad z \in \C \setminus \R.
\end{equation}
From random matrix models $J_N$, we see that $a_1^{(N)}$ converges in distribution to the standard Gaussian distribution, the gamma distribution and the beta distribution, respectively. Thus, taking the limit as $N$ tends to infinity, we obtain the following result on the MKT of the limiting measures in a high temperature regime. 
\begin{theorem}\label{thm:MKT-beta-ensembles}
Denote by $\mu_c^{(G)}$, $\mu_c^{(L)}$ and $\mu_c^{(J)}$ the limiting measure in the high temperature regime $\beta = 2c/N$ of the empirical distribution of Gaussian beta ensembles, beta Laguerre ensembles and beta Jacobi ensembles~\eqref{joint-density}, respectively. Here $c\in (0, \infty)$ is given. Then the following hold.
\begin{itemize}
	\item[\rm(i)] The standard Gaussian distribution $\Normal(0,1)$ and $\mu_c^{(G)}$ are linked by the MKR with parameter $c$.
	\item[\rm(ii)] The gamma distribution $\Gam(\alpha + c, 1)$ and $\mu_c^{(L)}$ are linked by the MKR with parameter $c$.

	\item[\rm(iii)] The beta distribution $\Beta(a+c+1, b+c+1)$ and $\mu_c^{(J)}$ are linked by the MKR with parameter $c$.

\end{itemize}
\end{theorem}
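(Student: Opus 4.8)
### Proof Proposal

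The plan is to pass to the limit $N \to \infty$ in the exact identity~\eqref{MKR-classical2}, which holds for each of the three classical beta ensembles. Fix $z \in \C \setminus \R$. The right-hand side $\Ex[\exp(-c\bra{L_N, \log(z-x)})]$ involves the empirical measure $L_N$, which by the results cited in the introduction converges weakly almost surely to the deterministic limit $\mu_c^{(\bullet)}$, with convergence also for continuous functions of polynomial growth. Since $\log(z - x)$ is continuous in $x$ on the relevant supports and grows only logarithmically, $\bra{L_N, \log(z-x)} \to \bra{\mu_c^{(\bullet)}, \log(z-x)}$ almost surely; boundedness of $\exp(-c\,\Real\bra{L_N,\log(z-x)})$ (which follows because $\Real\log(z-x) = \tfrac12\log|z-x|^2$ is bounded below on compact sets and the $L_N$ are tight) then gives, via dominated convergence, that the RHS converges to $\exp(-c\bra{\mu_c^{(\bullet)}, \log(z-x)})$. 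For the left-hand side, the random matrix models $J_N$ show that $a_1^{(N)}$ has an explicit distribution (a Gaussian with variance $1/\beta$ scaled appropriately, a gamma variable, or a beta variable in the three cases) and converges in distribution to $\Normal(0,1)$, $\Gam(\alpha+c,1)$, and $\Beta(a+c+1,b+c+1)$ respectively; since $t \mapsto (z-t)^{-c}$ is bounded and continuous on the supports (compact for Jacobi, and with sufficient decay/uniform integrability estimates for Gaussian and Laguerre), the LHS converges to $\int_\R (z-t)^{-c}\, d\nu(t)$ where $\nu$ is the corresponding limiting distribution.

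Equating the two limits yields, for every $z \in \C \setminus \R$,
\[
	\int_\R \frac{1}{(z-t)^{c}}\, d\nu(t) = \exp\!\left(-c \int_\R \log(z - u)\, d\mu_c^{(\bullet)}(u)\right),
\]
which is precisely the Markov--Krein relation with parameter $c$ between $\nu$ and $\mu_c^{(\bullet)}$. Specializing $\nu$ to $\Normal(0,1)$, $\Gam(\alpha+c,1)$, $\Beta(a+c+1,b+c+1)$ in the three cases gives assertions (i), (ii), (iii). One should also remark that $\mu_c^{(\bullet)}$ indeed satisfies the integrability condition $\int \log(1+|u|)\,d\mu_c^{(\bullet)}(u) < \infty$, which is automatic for the compactly supported Jacobi case and follows from the known tail/moment bounds on the limiting measures in the Gaussian and Laguerre cases.

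The main obstacle is justifying the interchange of limit and expectation on both sides, i.e.\ establishing the requisite uniform integrability. On the LHS this requires controlling $\Ex[|z - a_1^{(N)}|^{-c}]$ uniformly in $N$; since $\Image z \neq 0$, the integrand is bounded by $|\Image z|^{-c}$, so this is in fact immediate and the convergence follows directly from convergence in distribution of $a_1^{(N)}$ together with the bounded continuous function $t\mapsto (z-t)^{-c}$. On the RHS the subtlety is that $\log(z-x)$ is unbounded as $|x|\to\infty$; one handles this by splitting $\bra{L_N,\log(z-x)} = \bra{L_N, \log(z-x)\one_{|x|\le R}} + \bra{L_N,\log(z-x)\one_{|x|>R}}$ and using the polynomial-growth convergence together with uniform moment bounds on $L_N$ (available from the cited works) to show the tail term is negligible uniformly in $N$; alternatively one invokes the already-known polynomial-growth convergence $\bra{L_N, f}\to\bra{\mu_c^{(\bullet)}, f}$ directly, noting $|\log(z-x)| \le C(z)(1+|x|)$, so that $\Real\bra{L_N,\log(z-x)}$ is bounded below and $\exp(-c\bra{L_N,\log(z-x)})$ is dominated. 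In the Jacobi case all supports are contained in $[0,1]$ and every issue evaporates; the Gaussian and Laguerre cases need only the standard a priori estimates on the empirical measures, which are cited in the introduction.
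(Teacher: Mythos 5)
Your proposal is correct and follows essentially the same route as the paper: pass to the limit in the identity~\eqref{MKR-classical2}, handle the right-hand side via the known almost-sure convergence $\bra{L_N,f}\to\bra{\mu_c,f}$ for continuous $f$ of polynomial growth together with the uniform bound $|\exp(-c\bra{L_N,\log(z-x)})|=\prod_i|z-\lambda_i|^{-c/N}\le|\Image z|^{-c}$, and handle the left-hand side via convergence in distribution of $a_1^{(N)}$ against the bounded continuous function $t\mapsto(z-t)^{-c}$. The only cosmetic difference is that your initial justification of the domination on the right-hand side (tightness plus lower bounds on compacts) is more roundabout than necessary — the simple pointwise bound $|z-x|\ge|\Image z|$ for all real $x$, which you invoke later, already suffices.
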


We now turn to investigate eigenvalue processes related to the three beta ensembles which are called beta Dyson's Brownian motions, beta Laguerre processes and beta Jacobi processes. For those eigenvalue processes at high temperature, the mean-field limit, or the limiting behavior of the empirical measure process, has been studied \cite{NTT-2023, Trinh-Trinh-Jacobi,Trinh-Trinh-BLP}. It has been shown by a moment method that moment processes of the limiting probability measure-valued process $\mu_t$ are defined recursively by certain ordinary differential equations. The main goal of this paper is to identify the limit $\mu_t$ by using the MKT. In each of the three cases, we propose a 1d stochastic process whose marginal distribution at time $t$ is the MKT of $\mu_t$ with parameter $c$. 

Let us introduce the result in the Gaussian case  in more detail. The 1d stochastic process here is given by $(\xi + b_t)$, where $\xi$ is a random variable with distribution $\cM_{c}(\mu_0)$ and $b_t$ is a standard Brownian motion independent of $\xi$. Let us relate the result to the case of fixed temperature. When $\beta$ is fixed, the empirical measure process of beta Dyson's Brownian motions converges to a deterministic process of probability measures which can be written as the free convolution of the initial measure and a scaling of the standard semi-circle distribution (see \cite[\S4.3 and \S4.6]{Anderson-book}). In a high temperature regime, the limiting process can be characterized by an integro-differential equation \cite{Cepa-Lepingle-1997}. The construction of 1d stochastic process in this work is equivalent to the statement that the limiting process is the $c$-convolution of the initial measure and a scaling of $\mu_c^{(G)}$ (Theorem~\ref{thm:G-MKT}). Here the concept of $c$-convolution will be defined in Definition~\ref{defn:c-convolution} based on the MKT which continuously interpolates the classical convolution ($c \to 0$) and the free convolution ($c \to \infty$) \cite{Mergny-Potters-2022}.

Motivated by a recent result on  the local scaling of beta Jacobi processes when $\beta$ is fixed \cite{AVW2024}, we also investigate the local scaling in the Laguerre case and the Jacobi case. Roughly speaking, under suitable scaling, the limiting process in the Gaussian case can be found in both the Laguerre case and the Jacobi case. The limiting process in the Laguerre case can be found in the Jacobi case as well.  

The paper is organized as follows. In Section~\ref{sect:classical}, we give the proof of Theorem~\ref{thm:MKT-beta-ensembles}. Section~\ref{sect:MKT} introduces several properties of the MKT needed for the rest of the paper. The limiting behavior of three eigenvalue processes will be studied sequentially in Section~\ref{sect:G}, Section~\ref{sect:L} and Section~\ref{sect:J}.

\section{Classical beta ensembles at high temperature}\label{sect:classical}
We give the proof of three parts in Theorem~\ref{thm:MKT-beta-ensembles} in the next three subsections. As mentioned in the introduction, Theorem~\ref{thm:MKT-beta-ensembles} follows by letting $N \to \infty$ in the relation~\eqref{MKR-classical2}. To do that, we need the following result.

\begin{lemma}\label{lem:log}
Assume that as $N \to \infty$,
\[
	\bra{L_N, f} \to \bra{\mu_c, f} \quad \text{almost surely},
\]
for any continuous function $f$ of polynomial growth. Then for $z \in \C \setminus \R$,
\[
	\Ex[ \exp(-c \bra{L_N, \log(z - x)})] \to  \exp\left(-c  \bra{\mu_c, \log(z - x)} \right)\quad \text{as} \quad N \to \infty.
\]
\end{lemma}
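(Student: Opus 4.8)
The plan is to reduce the assertion to the bounded convergence theorem, the only subtlety being the (mild, logarithmic) growth of $\log(z-x)$ in $x$. Fix $z \in \C \setminus \R$ and write $z - x = |z-x|\, e^{i\arg(z-x)}$ with $\arg(z-x) \in (-\pi,\pi)$, so that
\[
	\log(z-x) = \log|z-x| + i\arg(z-x).
\]
Since $z \notin \R$ we have $|z - x|^2 = (\Real z - x)^2 + (\Image z)^2 \ge (\Image z)^2 > 0$ for all real $x$, so $x \mapsto \log|z-x| = \tfrac12 \log\big((\Real z - x)^2 + (\Image z)^2\big)$ is continuous on $\R$ and grows only logarithmically as $|x| \to \infty$, hence is of polynomial growth; and $x \mapsto \arg(z-x)$ is continuous and bounded by $\pi$. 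Applying the hypothesis separately to these real-valued functions gives, almost surely,
\[
	\bra{L_N, \log(z-x)} \longrightarrow \bra{\mu_c, \log(z-x)} \qquad (N \to \infty),
\]
where the right-hand side is a well-defined finite complex number: $\int_\R \log(1+|u|)\,d\mu_c(u) < \infty$ follows from the hypothesis applied to $x \mapsto \log(1+|x|)$ (or to $x\mapsto |x|$), and $|\arg(z-u)| \le \pi$ is integrable trivially.

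First I would invoke continuity of $w \mapsto e^{-cw}$ on $\C$ to upgrade the above to
\[
	\exp\!\big(-c\bra{L_N, \log(z-x)}\big) \longrightarrow \exp\!\big(-c\bra{\mu_c, \log(z-x)}\big) \qquad \text{almost surely.}
\]
Next I would record a deterministic, $N$-independent bound on the integrand. Writing $L_N = \tfrac1N \sum_{i=1}^N \delta_{\lambda_i}$, the real part of $-c\bra{L_N,\log(z-x)}$ equals $-c\bra{L_N,\log|z-x|}$, so
\[
	\big|\exp(-c\bra{L_N,\log(z-x)})\big| = \exp\!\big(-c\bra{L_N,\log|z-x|}\big) = \prod_{i=1}^N |z-\lambda_i|^{-c/N} \le |\Image z|^{-c},
\]
using $|z-\lambda_i| \ge |\Image z|$ for every $i$ together with the monotonicity of $t \mapsto t^{-c/N}$ on $(0,\infty)$.

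Having both the almost sure convergence and the uniform bound $|\Image z|^{-c}$, the claim follows from the dominated convergence theorem. The closest thing to an obstacle is the preliminary observation that $\log(z-x)$, though unbounded in $x$, grows slowly enough to fall under the polynomial-growth hypothesis, and that the modulus of the exponential integrand telescopes to $\prod_i |z-\lambda_i|^{-c/N}$, which is controlled uniformly in $N$ precisely because $z \notin \R$; the remaining steps are routine.
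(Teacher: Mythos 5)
Your proof is correct and follows essentially the same route as the paper's: establish almost sure convergence of $\bra{L_N,\log(z-x)}$ from the polynomial-growth hypothesis, pass through the continuous map $w\mapsto e^{-cw}$, and conclude by bounded convergence using the uniform bound $\prod_{i=1}^N|z-\lambda_i|^{-c/N}\le|\Image z|^{-c}$. The paper states the boundedness without writing out the explicit bound and does not spell out the real/imaginary decomposition of $\log(z-x)$, but these are exactly the details your write-up supplies.
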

\begin{remark}
	We note here that the assumption in Lemma~\ref{lem:log} holds for all three classical beta ensembles at high temperature \cite{B-GP15, Nakano-Trinh-2018, Trinh-Trinh-Jacobi, Trinh-Trinh-2021}.
\end{remark}
\begin{proof}
	For $z \in \C \setminus \R$, note that the function $\log(z-x)$ is continuous of polynomial growth. Then by the assumption, 
\[
	\bra{L_N, \log(z - x)} \to \bra{\mu_c, \log(z - x)}, \quad \text{almost surely as $N \to \infty$}.
\]
It now follows from the continuous mapping theorem that 
\[
	 \exp(-c \bra{L_N, \log(z - x)}) \to \exp(-c\bra{\mu_c, \log(z - x)}), \quad \text{almost surely as $N \to \infty$}.
\]
The mean value also converges because
\[
	 \exp(-c \bra{L_N, \log(z - x)}) = \prod_{i=1}^N (z - \lambda_i )^{-\frac cN}
\]
is bounded. The proof is complete.
\end{proof}

%\begin{theorem}\label{thm:MKT-beta-ensembles}
%For $z \in \C \setminus \R$, as $N \to \infty$ with $\beta = 2c/N$, 
%\[
%	\Ex[(z - \bra{\mu_N, x})^{-c}] \to \exp(-c \bra{\mu_c, \log(z - x)}).
%\]
%\end{theorem}
%\begin{proof}
%For $z \in \C \setminus \R$, the real part of $\log(z - x)$ is continuous, bounded below and of polynomial growth. The image part of $\log(z - x)$ is bounded continuous function. Thus, 
%\[
%	\bra{L_N, \log(z - x)} \to \bra{\mu_c, \log(z - x)} \quad \almostsurely \as N \to \infty.
%\] 
%It then follows that 
%\[
%	\Ex[\exp(-c \bra{L_N, \log(z - x)})] \to \exp(-c \bra{\mu_c, \log(z - x)}).
%\]
%by the bounded convergence theorem. The desired result immediately follows.
%\end{proof}

\subsection{Gaussian beta ensembles}
The random matrix model for Gaussian beta ensembles was introduced in \cite{DE02}. Let
\[
	J_N^{(G)} = \begin{pmatrix}
		a_1	&b_1	\\
		b_1	&a_2		&b_2\\
		&	\ddots	&\ddots	&\ddots\\
			&&b_{N-1}	&a_N
	\end{pmatrix}
	\sim
	\begin{pmatrix}
		\Normal(0,1)	&\chitilde_{(N-1)\beta}	\\
		\chitilde_{(N-1)\beta}	&\Normal(0,1)		&\chitilde_{(N-2)\beta}\\
		&	\ddots	&\ddots	&\ddots\\
			&&\chitilde_{\beta}	&\Normal(0,1)
	\end{pmatrix}.
\]
Here $\chitilde_{k} = \frac1{\sqrt 2} \chi_k$ denotes the chi distribution with $k$ degrees of freedom, divided by $\sqrt 2$.
To be more precise, the random variables $a_1, \dots, a_N, b_1, \dots, b_{N-1}$ are independent with $a_i \sim \Normal(0,1), i = 1, \dots, N$ and $b_i \sim \chitilde_{(N-i)\beta}, i = 1, \dots, N-1$.
Then the eigenvalues $\lambda_1, \dots, \lambda_N$ of $J_N^{(G)}$ follow the Gaussian beta ensemble, that is,  
\begin{equation}\label{GbE}
	(\lambda_1, \dots, \lambda_N) \propto \prod_{i<j}|\lambda_j - \lambda_i|^{\beta} \exp\left(-\frac12 (\lambda_1^2 + \cdots + \lambda_N^2) \right).
\end{equation}

Since the distribution of $a_1\sim \Normal(0,1)$ does not change as $N$ tends to infinity, taking the limit in the relation~\eqref{MKR-classical2}, we get the desired result: 
\[
	\Ex[(z - \Normal(0,1))^{-c}] = \exp\left(-c \int_\R \log(z - u)d\mu_c^{(G)}(u) \right) , \quad z \in \C \setminus \R.
\]
Theorem~\ref{thm:MKT-beta-ensembles}(i) is proved. We remark here that another proof of this part can be found in \cite{BGCG2022, Mergny-Potters-2022}.

\subsection{Beta Laguerre ensembles}
The random matrix model for beta Laguerre ensembles was introduced in \cite{DE02}.
Let $J_N^{(L)} := B_N (B_N)^\top$ be a Jacobi matrix, where $B_N$ is a bidiagonal matrix consisting of independent random variables with distributions
\[
	B_N = \begin{pmatrix}
		\chitilde_{2\alpha + \beta(N-1)}	\\
		\chitilde_{\beta (N-1)}	&\chitilde_{2\alpha + \beta(N-2)}		\\
		&\ddots	&\ddots	\\
		&&\chitilde_{\beta}	&\chitilde_{2\alpha}
	\end{pmatrix}.
\]
Here $(B_N)^\top$ denotes the transpose of $B_N$.
Recall that $\alpha, \beta  > 0$.
Then the joint density of the eigenvalues of $J_N^{(L)}$ is proportional to 
\begin{equation}\label{bLE}
	 \prod_{i<j}|\lambda_j - \lambda_i|^{\beta}\prod_{l = 1}^N \left(\lambda_l^{\alpha-1} e^{- \lambda_l} \right),\quad  (\lambda_i > 0).
\end{equation}
In this case, as $N \to \infty$ with $\beta = 2c/N$,
\[
	a_1^{(N)}\sim \chitilde_{2\alpha + \beta(N-1)}^2 = \Gam\left(\alpha + c - \frac cN, 1\right) \dto \Gam(\alpha + c, 1),
\]
where `$\dto$' denotes the convergence in distribution. Letting $N\to\infty$ in equation~\eqref{MKR-classical2}, we get 
\[
	\Ex[(z -\Gam(\alpha + c, 1))^{-c}] = \exp\left(-c \int_{(0, \infty)} \log(z - u)d\mu_c^{(L)}(u) \right) , \quad z \in \C \setminus \R.
\]
Here Lemma~\ref{lem:log} has been used.
In other words, the gamma distribution $\Gam(\alpha + c, 1)$ and the limiting measure $\mu_c^{(L)}$ are linked by the MKR with parameter $c$. Theorem~\ref{thm:MKT-beta-ensembles}(ii) is proved.

\subsection{Beta Jacobi ensembles}
Let $p_1, \dots, p_N$ and $q_1, \dots, q_{N - 1}$ be independent random variables having beta distributions with parameters
\begin{align*}
	p_n &\sim \Beta((N - n) \tfrac{\beta}2 + a + 1, (N - n) \tfrac{\beta}2 + b + 1),\\
	q_n &\sim \Beta((N - n) \tfrac{\beta}2, (N - n - 1) \tfrac{\beta}2 + a + b + 2),
\end{align*}
where $a, b > -1$ and $\beta > 0$. 
Define
\begin{align*}
	s_n &= {p_n(1 - q_{n - 1})},\quad n = 1, \dots, N, \quad (q_0 = 0),\\
	t_n &= {q_n(1 - p_n)}, \quad n = 1, \dots, N - 1,
\end{align*}
and form a Jacobi matrix
\[
	J_{N}^{(J)} = \begin{pmatrix}
		\sqrt{s_1}	\\
		\sqrt{t_1}	&\sqrt{s_2}		\\
		&\ddots	&\ddots \\
		&& \sqrt{t_{N - 1}}	& \sqrt{s_N}
	\end{pmatrix}
	 \begin{pmatrix}
		\sqrt{s_1} 	&\sqrt{t_1}	\\
			&\sqrt{s_2}		&\sqrt{t_2}		\\
		&&\ddots	&\ddots \\
		&&& \sqrt{s_N}
	\end{pmatrix}.
\]
Then the eigenvalues $\lambda_1, \lambda_2, \dots, \lambda_N$ of $J_{N}^{(J)}$ are distributed according to the following beta Jacobi ensemble \cite{Killip-Nenciu-2004}
\begin{equation}\label{bJE}
(\lambda_1, \dots, \lambda_N) \propto   \prod_{i < j} |\lambda_j - \lambda_i|^\beta \prod_{l = 1}^N \lambda_l^a (1 - \lambda_l)^b, \quad (\lambda_i \in (0,1)).
\end{equation}

In this case, 
\[
	a_1^{(N)}   \sim \Beta((N - n) \tfrac{\beta}2 + a + 1, (N - n) \tfrac{\beta}2 + b + 1)  \dto \Beta(c+a+1, c+b+1).
\]
The third statement in Theorem~\ref{thm:MKT-beta-ensembles} follows, which completes the proof.\qed

\section{Moment relations in the MKR}\label{sect:MKT}

Denote the moments of $\nu$ and $\mu$ by 
\[
	h_m = \int_\R t^m d\nu(t), \quad p_m = \int_\R t^m d\mu(t),
\]
in case the two probability measures have all finite moments. When they are compactly supported, by expanding in power series both sides of the Markov--Krein relation, we obtain the following moment relation \cite{Faraut-Fourati-2016}.
\begin{proposition}\label{prop:MKT}
	Two probability measures $\nu$ and $\mu$ with compact support are linked by the Markov--Krein relation with parameter $c > 0$, if and only if the moments satisfy
\[
	\sum_{m=0}^\infty \frac{(c)_m}{m!} h_m w^m = \exp \left(c	\sum_{m=1}^\infty \frac{p_m}{m}w^m	\right),
\]
for sufficiently small $w$. It follows that $h_m$ can be written as a polynomial in $p_1, \dots, p_m$:
\begin{equation}\label{moment-relation}
	h_m = \frac{m!}{(c)_m} \sum_{k=1}^m \frac {c^k}{k!}	\sum_{\alpha_i \ge 1; \alpha_1 + \cdots + \alpha_k = m} \frac{p_{\alpha_1}}{\alpha_1} \cdots \frac{p_{\alpha_k}}{\alpha_k}.
\end{equation}
Here $(c)_m $  is the Pochhammer symbol 
\[
	(c)_m = \begin{cases}
		c(c + 1) \cdots (c + m - 1), &m \ge 1,\\
		1	& m=0.
	\end{cases}
\]
\end{proposition}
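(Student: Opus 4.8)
The plan is to evaluate both sides of the Markov--Krein relation near $z=\infty$ and compare Taylor coefficients. Since $\nu$ and $\mu$ are compactly supported, fix $R>0$ with $\supp\nu\cup\supp\mu\subset[-R,R]$ and set $w=1/z$. For $\Real z$ large, every $t,u$ in the supports satisfies $|wt|,|wu|<1$, so the binomial series $(1-wt)^{-c}=\sum_{m\ge0}\frac{(c)_m}{m!}(wt)^m$ and the series $\log(z-u)=\log z-\sum_{m\ge1}\frac1m(wu)^m$ converge uniformly over the supports. Using $(z-t)^{-c}=z^{-c}(1-wt)^{-c}$ and integrating term by term, the left-hand side of the Markov--Krein relation equals $z^{-c}\sum_{m\ge0}\frac{(c)_m}{m!}h_m w^m$; integrating the logarithmic series against $\mu$ and exponentiating, the right-hand side equals $z^{-c}\exp\!\big(c\sum_{m\ge1}\frac{p_m}{m}w^m\big)$. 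Cancelling $z^{-c}$ gives the asserted generating-function identity for small $w$, and this already proves the forward implication.

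For the converse I would argue by analytic continuation. The maps $w\mapsto\int_\R(1-wt)^{-c}\,d\nu(t)$ and $w\mapsto\exp\!\big({-c}\int_\R\log(1-wu)\,d\mu(u)\big)$ are analytic on $\{|w|<1/R\}$ with Taylor expansions $\sum_m\frac{(c)_m}{m!}h_m w^m$ and $\exp(c\sum_{m\ge1}\frac{p_m}{m}w^m)$ respectively; if these power series coincide, the two functions coincide, hence the two sides of the Markov--Krein relation coincide for $\Real z$ large. Both sides are analytic on the slit plane $\C\setminus(-\infty,R]$ (with the branch conventions fixed in the paper), which is connected and contains $\C\setminus\R$, so the identity theorem upgrades the equality to all of $\C\setminus\R$, which is exactly what the Markov--Krein relation demands.

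It remains to read off the explicit formula~\eqref{moment-relation}. Expanding the exponential and then each power,
\[
\exp\!\Big(c\sum_{m\ge1}\tfrac{p_m}{m}w^m\Big)=\sum_{k\ge0}\frac{c^k}{k!}\Big(\sum_{j\ge1}\tfrac{p_j}{j}w^j\Big)^k=\sum_{k\ge0}\frac{c^k}{k!}\sum_{\alpha_1,\dots,\alpha_k\ge1}\frac{p_{\alpha_1}}{\alpha_1}\cdots\frac{p_{\alpha_k}}{\alpha_k}\,w^{\alpha_1+\cdots+\alpha_k},
\]
and matching the coefficient of $w^m$ (for which only $k=1,\dots,m$ contribute) against $\frac{(c)_m}{m!}h_m$ and dividing yields~\eqref{moment-relation}; the $m=0$ term is just $h_0=1$.

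I do not anticipate a real obstacle here: the argument is a formal power-series computation. The two points that need a careful word are the interchange of summation and integration, which is licensed by uniform convergence on the compact supports for $|w|<1/R$, and the branch-cut bookkeeping required to display both sides of the Markov--Krein relation as genuine analytic functions on one connected domain reaching to $\infty$, so that equality of the $w$-Taylor coefficients is truly equivalent to the Markov--Krein relation on $\C\setminus\R$.
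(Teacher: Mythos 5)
Your proposal is correct and follows the same route the paper indicates (expanding both sides of the Markov--Krein relation in powers of $w=1/z$ and matching coefficients); the paper itself only sketches this in one sentence and defers to Faraut--Fourati, whereas you supply the details, including the uniform convergence on compact supports, the identity-theorem argument for the converse on the connected slit plane $\C\setminus(-\infty,R]$, and the extraction of~\eqref{moment-relation} from the exponential series.
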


We also have the following recurrence relation for moments.
\begin{lemma}\label{lem:moment-MKT}
The moment relation~\eqref{moment-relation} is equivalent to the following  recurrence relation
\begin{equation}\label{moment-MKR}
	(c)_n h_n = c \sum_{i=0}^{n-1}  \frac{(n-1)!}{i!} (c)_i h_i p_{n-i}, \quad n = 1, 2,\dots, \quad h_0 = 1.
\end{equation}

\end{lemma}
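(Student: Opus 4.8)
The cleanest route is to pass to generating functions and use the first-order differential relation they satisfy. Set $H(w) = \sum_{m \ge 0} \frac{(c)_m}{m!} h_m w^m$ and $P(w) = \sum_{m \ge 1} \frac{p_m}{m} w^m$, regarded as formal power series in $w$. Expanding $\exp(cP(w)) = \sum_{k \ge 0} \frac{c^k}{k!} P(w)^k$ and grouping monomials according to the number $k$ of factors $\frac{p_{\alpha_j}}{\alpha_j}$ and the sizes $\alpha_1, \dots, \alpha_k \ge 1$ of the parts with $\alpha_1 + \cdots + \alpha_k = m$, one sees that the coefficient of $w^m$ in $\exp(cP(w))$ equals $\frac{(c)_m}{m!}\, h_m$ precisely when $h_m$ is given by \eqref{moment-relation}. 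Thus \eqref{moment-relation} (for all $m \ge 1$, together with $h_0 = 1$) is equivalent to the identity $H(w) = \exp(cP(w))$ of formal power series, which is the statement appearing in Proposition~\ref{prop:MKT}.

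Next I would invoke the standard fact that, since $P(0) = 0$, the identity $H = \exp(cP)$ with $H(0) = 1$ is equivalent, in the ring of formal power series, to the relation $H'(w) = c\,P'(w)\,H(w)$: differentiating $H = \exp(cP)$ gives one direction, and conversely the relation $H' = cP'H$ together with the normalization $H(0) = 1$ determines every coefficient of $H$ recursively, so it admits $\exp(cP)$ as its unique solution. It therefore remains only to identify $H' = cP'H$ with the recurrence \eqref{moment-MKR}.

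Finally I would read off coefficients. From the definitions, $H'(w) = \sum_{n \ge 1} \frac{(c)_n}{(n-1)!} h_n w^{n-1}$ (using $\frac{(c)_n}{n!} \cdot n = \frac{(c)_n}{(n-1)!}$) and $P'(w) = \sum_{m \ge 1} p_m w^{m-1}$, so that the coefficient of $w^{n-1}$ in $H'(w) = c\,P'(w)H(w)$ reads
\[
	\frac{(c)_n}{(n-1)!}\, h_n = c \sum_{i=0}^{n-1} \frac{(c)_i}{i!}\, h_i\, p_{n-i},
\]
the term $p_{n-i}\, w^{n-i-1}$ from $cP'$ multiplying the term $\frac{(c)_i}{i!} h_i w^i$ from $H$. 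Multiplying by $(n-1)!$ gives exactly \eqref{moment-MKR}; conversely \eqref{moment-MKR} with $h_0 = 1$ is nothing but this family of coefficient identities, hence equivalent to $H' = cP'H$. Chaining the three equivalences proves the lemma.

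There is essentially no hard step here; the only points requiring a little care are the formal justification that $H = \exp(cP)$ is equivalent to $H' = cP'H$ in the formal power series ring (so that no convergence or compactness hypothesis is needed beyond what Proposition~\ref{prop:MKT} already supplies) and the Pochhammer bookkeeping under differentiation. An alternative, purely combinatorial proof is also available: substitute \eqref{moment-relation} for $h_i$ into the right-hand side of \eqref{moment-MKR}, set $\alpha_{k+1} = n - i \ge 1$ to absorb $p_{n-i}$ into the composition of $n$, and use the symmetry of the resulting sum over compositions of $n$ to replace $\alpha_{k+1}$ by $n/(k+1)$; this recovers \eqref{moment-relation} for $h_n$.
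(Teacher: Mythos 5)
Your proposal is correct and follows essentially the same route as the paper: both arguments differentiate the identity $\phi = \exp(\psi)$ once to obtain the first-order relation $\phi' = \phi\,\psi'$ and then extract the $n$-th coefficient (the paper via Leibniz's rule for the $(n-1)$-st derivative evaluated at $w=0$, you by reading off the coefficient of $w^{n-1}$ in the product of formal power series, which is the same computation). Your explicit remark that $H' = cP'H$ with $H(0)=1$ uniquely determines $H$, so the equivalence runs in both directions, is a point the paper leaves implicit, but it is not a different method.
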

\begin{proof}
We use an idea from \cite{Smith-1995}.
Let 
\[
	\phi(w) = \sum_{m=0}^\infty \frac{(c)_m}{m!} h_m w^m, \quad \psi(w)  = c	\sum_{m=1}^\infty \frac{p_m}{m}w^m.
\]
Then  
\(
	\phi(w) = \exp(\psi(w)),
\)
for sufficiently small $w$. 
By taking the derivative of both sides, we get that ${\rm D}(\phi(w)) = \phi (w) {\rm D}(\psi(w))$. Now by using a binomial formula for the $(n-1)$st derivative of the product of two functions, we arrive at
\[
	{\rm D}^n(\phi(w)) = \sum_{i=0}^{n-1} \binom{n-1}{i} {\rm D}^{n-i}(\psi(w)) {\rm D}^i(\phi(w)).
\]
This identity at $w=0$ gives us the desired relation. The proof is complete.
\end{proof}

\begin{theorem}\label{thm:MKT-moment}
	Let $\mu$ be a probability measure on $\R$ with all finite moments
\[
	p_m = \int u^m d\mu(u) < \infty, \quad m = 0, 1,2,\dots.
\]
For $c > 0$,
define the sequence $\{h_n\}_{n \ge 0}$ by the moment relation~\eqref{moment-MKR}.
Then $\nu = \cM_{c}(\mu)$ has moments $\{h_n\}$. Consequently, when two probability measures $\nu$ and $\mu$ are determined by moments, the above moment relation holds, if and only if $\nu = \cM_{c}(\mu)$.
\end{theorem}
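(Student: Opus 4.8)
The plan is to reduce to compactly supported $\mu$, where the assertion is essentially Proposition~\ref{prop:MKT} read through Lemma~\ref{lem:moment-MKT}, and then to remove the support hypothesis by a truncation-and-weak-limit argument. \emph{The compactly supported case.} If $\mu$ has compact support, recall that $\cM_{c}(\mu)$ exists and is supported on the convex hull of $\supp\mu$, hence is itself compactly supported (see \cite{Fourati-2011, Faraut-Fourati-2016}; this is already visible in the Dirichlet computation~\eqref{MKR-spectral-measure}, where $\nu_N$ lives on the convex hull of $\{a_1,\dots,a_N\}$). Thus $\nu=\cM_{c}(\mu)$ and $\mu$ are two compactly supported measures linked by the Markov--Krein relation with parameter $c$, so Proposition~\ref{prop:MKT}, combined with Lemma~\ref{lem:moment-MKT}, shows that the moments of $\nu$ coincide with the sequence $\{h_n\}$ generated by~\eqref{moment-MKR}.

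\emph{Truncation.} For a general $\mu$ with all finite moments, set $\mu^{(k)}:=\mu|_{[-k,k]}/\mu([-k,k])$ for $k$ large enough that $\mu([-k,k])>1/2$. Each $\mu^{(k)}$ is a compactly supported probability measure, $\mu^{(k)}\to\mu$ weakly, and by dominated convergence $p_m^{(k)}:=\bra{\mu^{(k)},x^m}\to p_m$ for every $m$. Write $\nu^{(k)}=\cM_{c}(\mu^{(k)})$ and $h_n^{(k)}=\bra{\nu^{(k)},x^n}$. By the previous paragraph, $\{h_n^{(k)}\}$ is obtained from $\{p_m^{(k)}\}$ through the recurrence~\eqref{moment-MKR}; since~\eqref{moment-MKR} expresses $h_n$ as a fixed polynomial in $c,p_1,\dots,p_n$, we get $h_n^{(k)}\to h_n$ for every $n$, where $\{h_n\}$ is the sequence attached to $\mu$ by~\eqref{moment-MKR}. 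In particular $\sup_k h_{2m}^{(k)}<\infty$, and, since $|x|^n\le 1+x^{2n}$, also $\sup_k\bra{\nu^{(k)},|x|^n}<\infty$, for every $m,n$.

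\emph{Weak limit and conclusion.} From $\sup_k\bra{\nu^{(k)},x^2}<\infty$ the family $\{\nu^{(k)}\}$ is tight. Let $\nu^{(k_j)}\to\nu^{\ast}$ be any weakly convergent subsequence; I pass to the limit in the Markov--Krein relation for $(\nu^{(k_j)},\mu^{(k_j)})$. For fixed $z\in\C\setminus\R$, the map $t\mapsto(z-t)^{-c}$ is bounded and continuous on $\R$ (indeed $|(z-t)^{-c}|=|z-t|^{-c}\le|\Image z|^{-c}$), so $\bra{\nu^{(k_j)},(z-x)^{-c}}\to\bra{\nu^{\ast},(z-x)^{-c}}$; and from $|\log(z-u)|\le C_z(1+\log(1+|u|))$ one gets $\sup_k\bra{\mu^{(k)},|\log(z-x)|^2}\le C'_z(1+\bra{\mu,|x|})<\infty$, so $\log(z-x)$ is uniformly integrable along $\{\mu^{(k)}\}$, whence, with $\mu^{(k)}\to\mu$ weakly, $\bra{\mu^{(k_j)},\log(z-x)}\to\bra{\mu,\log(z-x)}$. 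Therefore $\nu^{\ast}$ and $\mu$ are linked by the Markov--Krein relation with parameter $c$, and since $\int_\R\log(1+|u|)\,d\mu(u)\le\int_\R|u|\,d\mu(u)<\infty$, the uniqueness of the transform \cite{Mergny-Potters-2022} forces $\nu^{\ast}=\cM_{c}(\mu)=:\nu$. Hence $\cM_{c}(\mu)$ exists and $\nu^{(k)}\to\nu$ weakly. Finally, $x\mapsto|x|^n$ being nonnegative and continuous, $\bra{\nu,|x|^n}\le\liminf_k\bra{\nu^{(k)},|x|^n}<\infty$, so $\nu$ has all moments; and $\{|x|^n\}$ is uniformly integrable along $\{\nu^{(k)}\}$ because $\sup_k\bra{\nu^{(k)},x^{2n}}<\infty$, so $h_n^{(k)}=\bra{\nu^{(k)},x^n}\to\bra{\nu,x^n}$. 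Comparing with $h_n^{(k)}\to h_n$ gives $\bra{\nu,x^n}=h_n$, the main assertion. The ``consequently'' part is then immediate: when $\nu$ is moment-determinate, \eqref{moment-MKR} holds for $(\nu,\mu)$ iff $\nu$ shares the moments of $\cM_{c}(\mu)$ iff $\nu=\cM_{c}(\mu)$.

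The step I expect to be the real obstacle is the last one: upgrading tightness of $\{\nu^{(k)}\}$ to the identification of \emph{every} subsequential limit with $\cM_{c}(\mu)$, which hinges on the uniform-integrability bound for $\log(z-x)$ under the truncated measures together with the uniqueness of the Markov--Krein transform. A secondary point to be careful about is the claim used in the compactly supported case that $\cM_{c}$ maps compactly supported measures to compactly supported ones, which is what allows Proposition~\ref{prop:MKT} to be applied to each $\mu^{(k)}$.
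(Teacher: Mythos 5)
Your proof is correct, but it follows a genuinely different route from the paper's. You truncate $\mu$ to $\mu^{(k)}=\mu|_{[-k,k]}/\mu([-k,k])$ and invoke, as an external input, the fact that $\cM_{c}$ of a compactly supported measure exists and is supported in the convex hull of the support, so that Proposition~\ref{prop:MKT} and Lemma~\ref{lem:moment-MKT} apply to each $\mu^{(k)}$; the limit $k\to\infty$ is then handled by tightness, identification of every subsequential limit through the uniqueness of the Markov--Krein correspondence, and uniform integrability of $x^n$ and of $\log(z-x)$. The paper instead approximates $\mu$ by empirical measures $\mu_N=N^{-1}\sum\delta_{a_i}$ of an i.i.d.\ sample, for which $\cM_{c}(\mu_N)$ is \emph{explicitly constructed} as the Dirichlet spectral measure $\sum w_i\delta_{a_i}$ of~\eqref{MKR-spectral-measure}; it then uses the recurrence~\eqref{moment-MKR} to show moment convergence of $\nu_N$ and passes to the limit in the MKR. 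The trade-off is exactly the one you flag at the end: the paper's argument is self-contained (existence of the transform, and the support-preservation recorded in the Remark after the theorem, come out of the proof rather than going into it), whereas your argument outsources the compactly supported existence statement to \cite{Fourati-2011, Faraut-Fourati-2016}. That statement is indeed available there, so this is a legitimate citation rather than a gap, but within the logic of the paper it is a consequence of this very theorem, so one should be explicit that no circularity is intended. Two smaller remarks: your uniform-integrability argument for $\log(z-x)$ can be bypassed, since $\bra{\mu^{(k)},\log(z-x)}=\mu([-k,k])^{-1}\int_{[-k,k]}\log(z-u)\,d\mu(u)$ converges to $\bra{\mu,\log(z-x)}$ by dominated convergence directly; and your identification of \emph{all} subsequential limits via uniqueness of the transform is slightly cleaner than the paper's extraction of a single convergent subsequence.
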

\begin{proof}
We claim that there is a sequence of probability measures 
\[
	\mu_N = \frac{1}{N} \sum_{i=1}^N \delta_{a_i}
\]
converging weakly to $\mu$ such that the convergence of moments holds, that is, for any $n$
\[
	\int x^n d\mu_N(x) \to \int x^n d\mu(x) \quad \text{as}\quad N \to \infty.
\]
Indeed, let $\{\xi_k\}$ be an i.i.d.\ sequence  of random variables defined on some probability space $(\Omega, \cF, \Prob)$ with the common distribution $\mu$. Then for almost surely $\omega \in \Omega$, the empirical distribution 
\begin{equation}\label{empirical}
	\frac 1N \sum_{i=1}^N \delta_{\xi_i(\omega)} \text{ converges weakly to $\mu$.}
\end{equation}
Moreover, for each $n$, since $\Ex[(\xi_k)^n] =  \int x^n d\mu(x) < \infty$, by the strong law of large numbers, almost surely,
\begin{equation}\label{moments}
	\frac{(\xi_1(\omega))^n + \cdots + (\xi_N(\omega))^n}{N}  \to \int x^n d\mu(x) \quad \text{as} \quad N \to \infty.
\end{equation}
Take an $\omega \in \Omega$ such that both \eqref{empirical} and \eqref{moments} hold, and set $a_i = \xi_i(\omega)$, we get the desired sequence $\mu_N = N^{-1} \sum_{i=1}^N \delta_{a_i}$.

Let $\nu_N = \cM_{c}(\mu_N)$ be the MKT of $\mu_N$. Then using the moment relation in Lemma~\ref{lem:moment-MKT}, we deduce by induction that for any $n = 1,2,\dots,$
\begin{equation}\label{moment-nun}
	\lim_{N \to \infty} \int_\R x^n d\nu_N(x) = h_n.
\end{equation}
It follows that the sequence $\{\nu_N\}$ is tight. Let $\nu$ be a weak limit of some subsequence, that is,
\[
	\nu_{N_k} \wto \nu \quad \as \quad k  \to \infty,
\] 
for some subsequence $\{N_k\}_k$.

For $z \in \C \setminus \R$, since the function $(z - t)^{-c}$ is a (complex-valued) bounded continuous function, 
\[
	\int_\R \frac{1}{(z - t)^c} d\nu_{N_k}(t) \to \int_\R \frac{1}{(z - t)^c} d\nu(t) \quad \as \quad k \to \infty.
\]
Now, the convergence of moments is equivalent to the statement that for any polynomial $p$, 
\[
	\bra{\mu_N, p} \to \bra{\mu, p} \quad \text{as} \quad N \to \infty,
\]
which implies that for any continuous function $f$ of polynomial growth \cite[Lemma~2.1]{Trinh-ojm-2018}, 
\[
	\bra{\mu_N, f} \to \bra{\mu, f} \quad \text{as} \quad N \to \infty.
\]
Consequently, 
\[
	\int_\R \log(z - u) d\mu_N(u) \to \int_\R \log(z - u) d\mu(u) \quad \as \quad N \to \infty.
\]
Therefore, the probability measure $\nu$ and $\mu$ are linked by the MKR with parameter $c$, or $\nu = \cM_{c}(\mu)$.

Finally, it follows from the condition~\eqref{moment-nun} that each moment of $\nu_{N_k}$ converges to the corresponding moment of $\nu$, implying moments of $\nu$ coincide with the sequence $\{h_n\}$. The proof is complete.
\end{proof}

\begin{remark}
From the construction in the proof of Theorem~\ref{thm:MKT-moment}, we observe that when $\supp \mu  \subset [a, b]$, then $\supp \cM_{c}(\mu) \subset [a,b]$. In particular, when $\mu$ is a probability measure on $[0, \infty)$ (resp.\ on $[0,1]$), the MKT $ \cM_{c}(\mu)$ is also a probability on $[0, \infty)$ (resp.\ on $[0,1]$).
\end{remark}

\begin{lemma}\label{lem:phi-psi}
Let $\{h_n\}_{n \ge 0}$ and $\{p_n\}_{n \ge 0}$ be two sequences with $h_0 = p_0 = 1$. For $c>0$, let 
\[
	\phi(z) = \sum_{n=0}^\infty \frac{(c)_n}{n!} h_n \frac{1}{z^{c+n}},  \quad
	\psi(z) = \sum_{n=0}^\infty{p_n} \frac{1}{z^{n+1}}
\]
be formal power series. Then the relation~\eqref{moment-MKR} is equivalent to the following relation of formal power series
\[
	\psi = -\frac1c \frac{\phi_z}{\phi}, \quad \text{or} \quad \phi_z = -c \phi \psi.
\]
Here $\phi_z$ denotes the formal derivative with respect to $z$,
\[
	\phi_z(z) = \sum_{n=0}^\infty \frac{-(c)_{n+1}}{n!} h_n \frac{1}{z^{c+n+1}}.
\]
\end{lemma}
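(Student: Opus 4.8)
The plan is to verify that the formal power series identity $\phi_z = -c\,\phi\,\psi$ is, coefficient by coefficient, exactly the recurrence~\eqref{moment-MKR}. First I would write down the Cauchy product $\phi(z)\psi(z)$ explicitly. Since $\phi(z) = \sum_{i \ge 0} \frac{(c)_i}{i!} h_i z^{-(c+i)}$ and $\psi(z) = \sum_{j \ge 0} p_j z^{-(j+1)}$, the product is
\[
	\phi(z)\psi(z) = \sum_{n=0}^\infty \left( \sum_{i=0}^{n} \frac{(c)_i}{i!} h_i\, p_{n-i} \right) \frac{1}{z^{c+n+1}}.
\]
On the other hand, the stated formula for $\phi_z$ gives the coefficient of $z^{-(c+n+1)}$ as $-\frac{(c)_{n+1}}{n!} h_n$. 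Multiplying $\phi\psi$ by $-c$ and equating coefficients of $z^{-(c+n+1)}$ for each $n \ge 0$ yields
\[
	-\frac{(c)_{n+1}}{n!} h_n = -c \sum_{i=0}^{n} \frac{(c)_i}{i!} h_i\, p_{n-i}.
\]
I would then isolate the $i=n$ term on the right (which is $c \cdot \frac{(c)_n}{n!} h_n p_0 = \frac{(c)_n}{(n-1)!} h_n$, using $p_0 = 1$), move it to the left, and observe that $\frac{(c)_{n+1}}{n!} - \frac{(c)_n}{(n-1)!} = \frac{(c)_n}{n!}\big( (c+n) - n \big) = \frac{(c)_n}{n!} \cdot c = \frac{(c)_n \, c}{n!}$. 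Wait — that does not immediately match; rather, I should keep the $i=n$ term on the right and simply rewrite the identity by multiplying through by $n!$ and using $(c)_{n+1} = (c)_n (c+n)$. Comparing with~\eqref{moment-MKR}, which reads $(c)_n h_n = c \sum_{i=0}^{n-1} \frac{(n-1)!}{i!} (c)_i h_i p_{n-i}$, I would check that splitting off the $i=n$ term from the sum $\sum_{i=0}^n$ in the power-series identity and simplifying the resulting coefficient of $h_n$ reproduces precisely the left side $(c)_n h_n$ of~\eqref{moment-MKR}, while the remaining sum $\sum_{i=0}^{n-1}$ matches the right side after cancelling common factors of $n!$ versus $(n-1)!$. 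Both directions (moment relation $\Rightarrow$ series relation, and conversely) follow from this equality of coefficients since the correspondence $h_n \leftrightarrow$ coefficient is a bijection.

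The computation is entirely routine; there is no real obstacle beyond bookkeeping with the Pochhammer symbols and the factorials, and the only point requiring a moment's care is confirming that the $i=n$ boundary term, once separated, combines with the left-hand coefficient $-\frac{(c)_{n+1}}{n!} h_n$ in the right way — concretely that $\frac{(c)_{n+1}}{n!} - \frac{c\,(c)_n}{n!} = \frac{(c)_n}{n!}(c+n-c) = \frac{n\,(c)_n}{n!} = \frac{(c)_n}{(n-1)!}$, so after dividing by $n$ one recovers the factor $\frac{(n-1)!}{i!}$ appearing in~\eqref{moment-MKR}. I would present the proof as: expand $-c\phi\psi$, equate with $\phi_z$ coefficientwise, and show the resulting family of scalar identities (for $n \ge 1$; the $n=0$ case gives $h_0 = p_0 = 1$ consistently) is term-by-term the recurrence~\eqref{moment-MKR}.
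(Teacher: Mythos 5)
Your proposal is correct and is precisely the ``straightforward calculation'' the paper omits: equate coefficients of $z^{-(c+n+1)}$ in $\phi_z=-c\,\phi\,\psi$, split off the $i=n$ term of the Cauchy product, use $(c)_{n+1}=(c)_n(c+n)$ so that $\tfrac{(c)_{n+1}}{n!}-\tfrac{c\,(c)_n}{n!}=\tfrac{(c)_n}{(n-1)!}$, and multiply through by $(n-1)!$ to recover \eqref{moment-MKR}, with the $n=0$ coefficient holding automatically since $h_0=p_0=1$. The only blemishes are prose slips you already self-correct (your first evaluation of the $i=n$ term, and ``dividing by $n$'' where you mean multiplying by $(n-1)!$); the underlying algebra is right, and both directions of the equivalence follow from coefficientwise equality of formal series.
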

\begin{proof}
We omit detailed arguments which can be done by a straightforward calculation.
\end{proof}

Note that when $\{h_n\}_{n \ge 0}$ and $\{p_n\}_{n \ge 0}$ are moments of two probability measures $\nu$ and $\mu$ with compact support, respectively, the two formal power series are convergent, provided that $z \in \C$ is large enough, and we get
\[
	\phi(z) = \int \frac{d\nu(x)}{(z - x)^c}, \quad \psi(z) = \int \frac{d\mu(u)}{z - u}, \quad \text{for $z$ large enough}.
\]

\begin{definition}\label{defn:c-convolution}
	The probability measure $\mu$ is the $c$-convolution of two probability measures $\mu_1$ and $\mu_2$ if 
\[
	\cM_{c}(\mu) = \cM_{c}(\mu_1) * \cM_{c}(\mu_2).
\]
Here `$*$' denotes the (classical) convolution of two probability measures.
We write $\mu = \mu_1 \boxplus_c \mu_2$. Note that we have implicitly assumed that all the MKTs exist.
\end{definition}

\begin{remark}
The $c$-convolution interpolates classical convolution ($c \to 0$) and free convolution ($c \to \infty$) \cite{BGCG2022, Mergny-Potters-2022}.
We recall here an important open question mentioned in \cite{BGCG2022, Mergny-Potters-2022}: whether the $c$-convolution of two probability measures is again a probability measure? In other words, for two probability measures $\mu_1$ and $\mu_2$, is $\cM_{c}(\mu_1) * \cM_{c}(\mu_2)$ the MKT of a probability measure?
\end{remark}

Let $\cP_G$ be a set of probability measures on $\R$ with all finite moments such that for $\mu \in \cP_G$, there is a $\Lambda > 0$ such that 
\[
	\left| \int_\R x^n d\mu(x) \right| \le (\Lambda n)^n, \quad n =0,1,2,\dots.
\]
We recall well-known Carleman's condition which states that the condition 
\begin{equation}
	\sum_{n=1}^\infty m_{2n}^{-\frac1{2n}} = +\infty
\end{equation}
is sufficient for $\mu$ to be determined by moments $\{m_n\}_{n \ge 0}$. Thus, probability measures in $\cP_G$ are determined by moments. We claim that $\cP_G$ is closed under the MKT, that is, for $\mu \in \cP_G$, the MKT $\cM_{c}(\mu) \in \cP_G$. It is a consequence of the following result.
\begin{lemma}\label{lem:moment-bound}
Let $c>0$ be given.
Assume that moments $\{h_n\}_{n \ge 0}$ and $\{p_n\}_{n \ge 0}$ satisfy the relation~\eqref{moment-MKR}.
Assume that $|p_n| \le (\Lambda n)^n, n = 0,1,2,\dots$, for some $\Lambda > 0$. Then there is an $M>0$ such that 
\[
	|h_n| \le (M n)^n, \quad n = 0,1,2,\dots.
\]
\end{lemma}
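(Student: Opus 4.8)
The plan is to bypass the recurrence~\eqref{moment-MKR} itself and work with the closed form of $h_m$. By Lemma~\ref{lem:moment-MKT} the hypothesis~\eqref{moment-MKR} is equivalent to the polynomial identity~\eqref{moment-relation}, so
\[
	h_m = \frac{m!}{(c)_m} \sum_{k=1}^m \frac{c^k}{k!} \sum_{\alpha_i \ge 1;\, \alpha_1 + \cdots + \alpha_k = m} \frac{p_{\alpha_1}}{\alpha_1} \cdots \frac{p_{\alpha_k}}{\alpha_k}.
\]
Taking absolute values and inserting $|p_n| \le (\Lambda n)^n$ gives
\[
	|h_m| \le \frac{m!}{(c)_m}\, \Lambda^m \sum_{k=1}^m \frac{c^k}{k!} \sum_{\alpha_i \ge 1;\, \alpha_1 + \cdots + \alpha_k = m} \alpha_1^{\alpha_1 - 1} \cdots \alpha_k^{\alpha_k - 1},
\]
and the whole task is to estimate the right-hand side.

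First I would bound the innermost sum. For any composition $\alpha_1 + \cdots + \alpha_k = m$ with $\alpha_i \ge 1$ each part satisfies $\alpha_i \le m$, hence $\prod_{i=1}^k \alpha_i^{\alpha_i - 1} \le m^{\sum_i(\alpha_i - 1)} = m^{m-k}$; moreover the number of such compositions is $\binom{m-1}{k-1} \le 2^{m-1}$. This yields
\[
	|h_m| \le \frac{m!}{(c)_m}\, (2\Lambda)^m m^m \sum_{k=1}^m \frac{(c/m)^k}{k!}.
\]
Next I would use $\sum_{k=1}^m (c/m)^k/k! \le e^{c/m} - 1 \le (c/m)e^{c}$ (valid for $m \ge 1$), together with $(c)_m = c(c+1)\cdots(c+m-1) \ge c(m-1)!$, hence $m!/(c)_m \le m/c$. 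Combining these,
\[
	|h_m| \le \frac{m}{c}\,(2\Lambda)^m m^m \cdot \frac{c e^c}{m} = e^c (2\Lambda)^m m^m, \qquad m \ge 1.
\]

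Finally, since $e^c \le (e^c)^m$ for every $m \ge 1$, this gives $|h_m| \le (2 e^c \Lambda\, m)^m$ for $m \ge 1$, while $|h_0| = 1 = (2e^c\Lambda\cdot 0)^0$, so the lemma holds with $M = 2 e^c \Lambda$. I do not expect a real obstacle here: once the explicit formula~\eqref{moment-relation} is used, the argument is pure bookkeeping, and the only point to watch is that the constant and (sub)exponential prefactors appearing along the way ($m!/(c)_m$, the $2^{m-1}$ from counting compositions, the factor $e^c$) are all tame enough to be absorbed into the $m$th power, using the elementary facts that $C \le C^m$ when $C \ge 1$ and $m^d \le (e^d)^m$. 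If one wanted sharper constants one could replace the crude bound $\prod_i \alpha_i^{\alpha_i-1} \le m^{m-k}$ by $\sum_\alpha \prod_i \alpha_i^{\alpha_i-1} \le k\, m^{m-k-1}$, which follows from the generalized Cayley formula for rooted labelled forests, but this refinement is not needed for the statement as given.
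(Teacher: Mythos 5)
Your proof is correct, but it takes a genuinely different route from the paper's. The paper works directly with the recurrence~\eqref{moment-MKR}: it bounds the coefficient $c\,\frac{(n-1)!}{i!}\,\frac{(c)_i}{(c)_n}$ by $c/n$ when $c\ge 1$ and by $1$ when $c\in(0,1)$, which reduces the problem to the convolution inequality $|h_n|\le \frac cn\sum_{i=0}^{n-1}|h_i|\,|p_{n-i}|$ (resp.\ $|h_n|\le\sum_{i=0}^{n-1}|h_i|\,|p_{n-i}|$) and then closes by induction, obtaining $M=c\Lambda$ (resp.\ $M=2\Lambda$). You instead pass to the closed form~\eqref{moment-relation} --- legitimately, via the equivalence recorded in Lemma~\ref{lem:moment-MKT} --- and estimate it term by term. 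All of your individual bounds check out: $\prod_i\alpha_i^{\alpha_i-1}\le m^{m-k}$, at most $\binom{m-1}{k-1}\le 2^{m-1}$ compositions, $\sum_{k\ge 1}(c/m)^k/k!\le (c/m)e^{c}$, and $m!/(c)_m\le m/c$ because $(c)_m\ge c\,(m-1)!$; these combine to give the explicit constant $M=2e^{c}\Lambda$ with no induction and no case split on $c$. What the paper's argument buys is brevity and a constant that does not grow exponentially in $c$; what yours buys is a single self-contained computation with a fully explicit $M$. One small caveat: your closing aside claiming $\sum_\alpha\prod_i\alpha_i^{\alpha_i-1}\le k\,m^{m-k-1}$ is false as stated (for $k=1$ the left-hand side is $m^{m-1}$ while the right-hand side is $m^{m-2}$), but since you explicitly do not use that refinement, it does not affect the validity of the proof.
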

\begin{proof}
	Let us first consider the case $c \ge 1$. If follows from the following estimate
	\[
		c   \frac{(n-1)!}{i!} \frac{(c)_i}{(c)_n} = 
			c\frac{(i+1)\cdots (n-1)}{(c+i)\cdots (c+n-1)} \le \frac cn,
	\]
that 
\[
	|h_n| \le \frac cn \sum_{i=0}^{n-1} |h_i| |p_{n-i}|.
\]
From which, by induction we get $|h_n| \le (c\Lambda n)^n$, implying the desired estimate with $M = c \Lambda$.

Next we consider the case $c \in (0,1)$. In this case, we have
	\[
		c   \frac{(n-1)!}{i!} \frac{(c)_i}{(c)_n} = 
			c\frac{(i+1)\cdots (n-1)}{(c+i)\cdots (c+n-1)} \le 1,
	\]
which implies 
\[
	|h_n| \le \sum_{i=0}^{n-1} |h_i| |p_{n-i}|.
\]
Then the desired estimate with $M=2\Lambda$ can be proved by induction. The proof is complete.
\end{proof}

For the Laguerre case, we consider probability measures on $[0, \infty)$. Carleman's condition in this case is as follows: a probability measure $\mu$ on $[0, \infty)$ is uniquely determined by moments $\{m_n\}$, if 
\[
	\sum_{n=1}^\infty m_n^{-\frac1{2n}} = \infty.
\]
Now let $\cP_L$ be the set  of probability measures $\mu$ on $[0, \infty)$ with moments $\{m_n\}_{n \ge 0}$  such that 
\[
	m_n \le (\Lambda n)^{2n}, \quad n = 0,1,2,\dots,
\]
for some $\Lambda > 0$. Then probability measures in $\cP_L$ are determined by moments and $\cP_L$ is also closed under the MKT.

\section{Beta Dyson's Brownian motions}\label{sect:G}
This section deals with beta Dyson's Brownian motions which are related to Gaussian beta ensembles.
They are defined to be the strong solution of the following system of stochastic differential equations (SDEs)
\begin{equation}
	d\lambda_i(t)= d  b_i(t)  + \dfrac\beta2 \sum\limits_{j : j \neq i} \dfrac{1}{\lambda_i(t) - \lambda_j(t)} dt,\quad i = 1, \dots, N,
\end{equation}
with given initial condition under the constraint that $\lambda_1(t) \le \lambda_2(t) \le \cdots \le \lambda_N(t)$ for all $t > 0$ \cite{Cepa-Lepingle-1997}. Here $\{b_i(t)\}_{i=1}^N$ are independent standard Brownian motions. Under the trivial initial condition, that is, $\lambda_i(0) = 0, i = 1, \dots, N$, it is known that for any $t > 0$, the joint distribution of $\{\lambda_i(t)/\sqrt t\}_{i=1}^N$ is the ordered Gaussian beta ensemble
\begin{equation*}
	(\lambda_1, \dots, \lambda_N) \propto \prod_{i<j}|\lambda_j - \lambda_i|^{\beta} \exp\left(-\frac12 (\lambda_1^2 + \cdots + \lambda_N^2) \right), \quad \lambda_1 \le \lambda_2 \le \cdots \le \lambda_N.
\end{equation*}

In a high temperature regime where $\beta = 2c/N$, the mean-field limit has been studied in \cite{Cepa-Lepingle-1997} by a standard method. It has been also studied in \cite{NTT-2023} by a moment method under the trivial initial condition. We reproduce the moment method here to study the limiting behavior of the scaled processes $x_i(t) = \sigma \lambda_i(t)$, for fixed $\sigma > 0$. The SDEs for $\{x_i(t)\}$ are given by  
\begin{equation}\label{SDE-G-sigma}
	dx_i(t)= \sigma d  b_i(t)  + \sigma^2\dfrac{c}N \sum\limits_{j : j \neq i} \dfrac{1}{x_i(t) - x_j(t)} dt, \quad i = 1,2,\dots, N.
\end{equation}

From now on, we denote the empirical measure process of $\{x_i(t)\}_{i=1}^N$ by
\begin{equation}
	\mu_t^{(N)} = \frac1N \sum_{i=1}^N \delta_{x_i(t)},
\end{equation}
and its moment processes by
\begin{equation}
	S_n^{(N)}(t) = \bra{\mu_t^{(N)}, x^n} = \frac1N \sum_{i=1}^N x_i(t)^n,\quad n = 0,1,\dots.
\end{equation}

For $f \in C^2(\R)$,  by It\^o's formula \cite{Cepa-Lepingle-1997, NTT-2023},
\begin{align}
	d\bra{\mu^{(N)}_t, f} 
	&=\sigma \frac1N\sum_{i=1}^N f'(x_i(t))  d b_i(t)  + \sigma^2 \frac c2 \iint \frac{f'(x) - f'(y)}{x - y}d\mu_t^{(N)}(x) d\mu_t^{(N)}(y)dt \notag\\
	&\quad + \sigma^2 \bra{\mu_t^{(N)}, \frac12 f''} dt  - \sigma^2 \frac{c}{2N} \bra{\mu_t^{(N)}, f''}dt. \label{Ito-f}
\end{align}
Note that the above formula holds when $x_1(t), \dots, x_N(t)$ are all distinct, which occurs almost surely for almost every $t \in \R$ \cite{Cepa-Lepingle-1997}.

With $f = x^n$, the formula~\eqref{Ito-f} implies a recurrence relation for moment processes
\begin{align*}
	dS_n^{(N)}(t) &=  	\sigma\frac{ n}N\sum_{i=1}^N x_i(t)^{n - 1}  d b_i(t) + \sigma^2\frac{c n}2 \sum_{j = 0}^{n-2} S_{j}^{(N)}(t) S_{n-2-j}^{(N)}(t) dt \\
	&\quad + \sigma^2\frac12 n(n-1) S_{n-2}^{(N)}(t) dt - \sigma^2\frac c{2N} n(n-1) S_{n-2}^{(N)}(t) dt,
\end{align*}
or in the integral form 
\begin{align}
	S_n^{(N)}(t) &=  S_n^{(N)}(0) + 	\sigma\frac{ n}N\sum_{i=1}^N \int_0^t x_i(u)^{n - 1}  d b_i(u) + \sigma^2\frac{c n}2 \sum_{j = 0}^{n-2} \int_0^t  S_{j}^{(N)}(u) S_{n-2-j}^{(N)}(u) du \notag \\
	&\quad +\sigma^2 \frac12 n(n-1) \int_0^t S_{n-2}^{(N)}(u) du - \sigma^2 \frac c{2N} n(n-1) \int_0^t S_{n-2}^{(N)}(u) du, \quad (n \ge 1).	\label{Gauss-Sn}
\end{align}

Consider the space $C([0, T])$ of continuous functions on $[0,T]$ endowed with the supremum norm $\norm{\cdot}_\infty$. Then moment processes $S^{(N)}_n$ are $C([0, T])$-valued random elements. By induction, we will show the convergence of $S^{(N)}_n$ to a deterministic limit as $N \to \infty$. 
\begin{definition}
A sequence $\{X^{(N)}\}_N$ of $C([0, T])$-valued random elements is said to converge in probability to a non-random element $x \in C([0, T])$ if for any $\varepsilon > 0$, 
\[
		\lim_{N \to \infty} \Prob(\norm{X^{(N)} - x}_\infty \ge \varepsilon) =0.
\] 
%We denote the convergence in probability by `$\Pto$'.
\end{definition}

\begin{theorem}\label{thm:G-moments}
Assume that each moment of the initial empirical measure $\mu_0^{(N)}$ converges to a limiting moment, that is, for each $n$,
\begin{equation}\label{moment-assumption}
	S_n^{(N)}(0) = \frac{1}{N} \sum_{i=1}^N x_i(0)^n \to a_n \quad \text{as} \quad N \to \infty.
\end{equation}
Then it holds that 
\[
	S_n^{(N)}(t) \to m_n(t) \quad \text{as} \quad N \to \infty,
\]
in probability as random elements in $C([0, T])$. Here the limiting moment processes are recursively defined by $m_0(t) = 1, m_1(t) = a_1,$
\begin{align}
	m_n(t) &= a_n + \sigma^2 \frac{cn}2 \sum_{j=0}^{n-2}\int_0^t m_j(u) m_{n-2-j}(u) du\notag\\
	&\quad +\sigma^2 \frac12 n(n-1) \int_0^t m_{n-2}(u) du, \quad (n \ge 2).\label{Gauussian-recurrence-moment}
\end{align}
\end{theorem}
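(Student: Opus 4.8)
The plan is to run an induction on $n$, reading off from \eqref{Gauss-Sn} the decomposition
\[
	S_n^{(N)}(t)=S_n^{(N)}(0)+M_n^{(N)}(t)+\sigma^2\tfrac{cn}{2}\sum_{j=0}^{n-2}\int_0^t S_j^{(N)}S_{n-2-j}^{(N)}\,du+\sigma^2\tfrac{n(n-1)}{2}\int_0^t S_{n-2}^{(N)}\,du-\sigma^2\tfrac{cn(n-1)}{2N}\int_0^t S_{n-2}^{(N)}\,du,
\]
where $M_n^{(N)}(t)=\sigma\frac nN\sum_{i=1}^N\int_0^t x_i(u)^{n-1}\,db_i(u)$ is a martingale with $\langle M_n^{(N)}\rangle_t=\sigma^2\frac{n^2}{N}\int_0^t S_{2n-2}^{(N)}(u)\,du$. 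The subtlety I would flag at the outset is that this quadratic variation involves the moment of order $2n-2>n$, so the induction on $n$ does not close on its own and must be fed by separate a priori moment bounds; organizing these is the main point.

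\textbf{Step 1: uniform pointwise moment bounds.} First I would take expectations in the displayed identity, which annihilates $M_n^{(N)}$. Using the elementary inequality, valid for the probability measure $\mu_u^{(N)}$ and $0\le j\le 2k-2$,
\[
	\bigl|S_j^{(N)}(u)\,S_{2k-2-j}^{(N)}(u)\bigr|\le S_{2k-2}^{(N)}(u),
\]
which follows from Jensen's (equivalently H\"older's) inequality applied to $\mu_u^{(N)}$ since $2k-2$ is even, and noting that the $1/N$-term carries a favourable sign, one obtains $\Ex[S_{2k}^{(N)}(t)]\le S_{2k}^{(N)}(0)+C_k\int_0^t\Ex[S_{2k-2}^{(N)}(u)]\,du$ for a constant $C_k=C_k(\sigma,c,k)$. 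Since \eqref{moment-assumption} forces $\sup_N S_{2k}^{(N)}(0)<\infty$, an induction on $k$ gives $\sup_N\sup_{t\le T}\Ex[S_{2k}^{(N)}(t)]<\infty$ for every $k$, and hence, by Cauchy--Schwarz, $\sup_N\sup_{t\le T}\Ex[|S_n^{(N)}(t)|]<\infty$ for every $n$.

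\textbf{Step 2: supremum bounds and tightness.} Next I would take $\sup_{s\le t}$ in the displayed identity (with $n=2k$), apply Doob's $L^2$-inequality to get $\Ex[\sup_{s\le T}|M_{2k}^{(N)}(s)|^2]\le 4\Ex[\langle M_{2k}^{(N)}\rangle_T]=O(1/N)$ by Step 1, and use the same product estimate inside the integrals, to arrive at $\Ex[\sup_{s\le t}S_{2k}^{(N)}(s)]\le A_k+C_k'\int_0^t\Ex[\sup_{s\le u}S_{2k-2}^{(N)}(s)]\,du$ with $\sup_N A_k<\infty$; induction on $k$ then yields $\sup_N\Ex[\sup_{t\le T}|S_n^{(N)}(t)|]<\infty$ for all $n$, so each $\{\sup_{t\le T}|S_n^{(N)}(t)|\}_N$ is bounded in probability. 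A parallel (and easier) induction on \eqref{Gauussian-recurrence-moment} shows each $m_n$ is a well-defined continuous, hence bounded, function on $[0,T]$.

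\textbf{Step 3: convergence.} Finally I would induct on $n$. The cases $n=0$ (trivial) and $n=1$ (where $S_1^{(N)}(t)-a_1=(S_1^{(N)}(0)-a_1)+\tfrac\sigma N\sum_i b_i(t)$, the last term $\to0$ in $C([0,T])$ in $L^2$) start the induction. Assuming $S_j^{(N)}\to m_j$ in probability in $C([0,T])$ for all $j\le n-1$, I subtract \eqref{Gauussian-recurrence-moment} from the displayed identity and write $S_j S_{n-2-j}-m_j m_{n-2-j}=(S_j-m_j)S_{n-2-j}+m_j(S_{n-2-j}-m_{n-2-j})$, bounding $\sup_{t\le T}|S_n^{(N)}(t)-m_n(t)|$ by: (i) $|S_n^{(N)}(0)-a_n|\to0$ by \eqref{moment-assumption}; (ii) $\sup_{t\le T}|M_n^{(N)}(t)|\to0$ in $L^2$ by Doob and Step 1; (iii) the product integrals, $\to0$ in probability since $\sup|S_j-m_j|\to0$ (induction), $\sup|S_{n-2-j}^{(N)}|$ is bounded in probability (Step 2), and $\sup|m_j|<\infty$; (iv) $\sigma^2\tfrac{n(n-1)}{2}T\sup_{t\le T}|S_{n-2}^{(N)}-m_{n-2}|\to0$ (induction); (v) $\sigma^2\tfrac{cn(n-1)}{2}\tfrac TN\sup_{t\le T}|S_{n-2}^{(N)}(t)|\to0$ in probability by Step 2. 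This closes the induction and proves Theorem~\ref{thm:G-moments}. The hard part is exactly the two-layer structure forced by the martingale's quadratic variation: one cannot induct on $n$ alone, but must first secure the self-contained expectation bounds of Step 1 for all moments before running the pathwise induction of Step 3.
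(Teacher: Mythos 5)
Your proposal is correct and follows essentially the same route as the paper: the paper's Lemma~\ref{lem:Gaussian-MTG} is exactly your combination of the a priori bounds $\sup_N\sup_{t\le T}\Ex[S_{2k}^{(N)}(t)]<\infty$ (proved via the same product estimate $S_j^{(N)}S_{n-2-j}^{(N)}\le S_{n-2}^{(N)}$ and induction on even moments) with Doob's inequality applied to the quadratic variation involving $S_{2n-2}^{(N)}$, after which the convergence induction on $n$ closes as in your Step~3. Your Step~2 supremum-in-time expectation bounds are slightly more than needed (boundedness in probability of $\sup_t|S_j^{(N)}|$ for $j\le n-2$ already follows from the induction hypothesis), but this is harmless.
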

\begin{proof}
	The proof is straightforward by induction with the help of Lemma~\ref{lem:Gaussian-MTG}.
\end{proof}

\begin{lemma}\label{lem:Gaussian-MTG}
	Under the moment assumption~\eqref{moment-assumption}, the martingale part
\[
	M_n^{(N)}(t) = \sigma\frac{ n}N\sum_{i=1}^N \int_0^t x_i(u)^{n - 1}  d b_i(u) 
\]
converges to $0$ in probability as random elements in $C([0, T])$.
\end{lemma}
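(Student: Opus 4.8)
The plan is to control $M_n^{(N)}$ in $L^2$, uniformly in $N$, strongly enough that its supremum over $[0,T]$ vanishes. Since $M_n^{(N)}(0)=0$ and the driving Brownian motions $b_i$ are independent, $M_n^{(N)}$ is a continuous, square-integrable martingale (after the localization discussed below) with quadratic variation
\[
	\langle M_n^{(N)}\rangle_t = \sigma^2\frac{n^2}{N^2}\sum_{i=1}^N\int_0^t x_i(u)^{2(n-1)}\,du = \sigma^2\frac{n^2}{N}\int_0^t S_{2(n-1)}^{(N)}(u)\,du .
\]
Doob's $L^2$ maximal inequality then gives
\[
	\Ex\!\left[\,\norm{M_n^{(N)}}_\infty^2\,\right] \le 4\,\Ex\!\left[\langle M_n^{(N)}\rangle_T\right] = 4\sigma^2\frac{n^2}{N}\,\Ex\!\left[\int_0^T S_{2(n-1)}^{(N)}(u)\,du\right],
\]
where $\norm{\cdot}_\infty$ is the supremum norm on $C([0,T])$. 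Hence, once we show that $\sup_{0\le u\le T}\Ex[S_{2m}^{(N)}(u)]$ is bounded by a constant independent of $N$ for every $m\ge 0$, the right-hand side is $O(1/N)$, and by Chebyshev's inequality this yields $M_n^{(N)}\to 0$ in $L^2$, hence in probability, in $C([0,T])$.

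The heart of the matter is therefore the a priori estimate
\[
	C_m := \sup_{N}\ \sup_{0\le t\le T}\Ex\big[S_{2m}^{(N)}(t)\big] < \infty, \qquad m=0,1,2,\dots,
\]
which I would establish by induction on $m$. The cases $m=0$ (where $S_0^{(N)}\equiv 1$) and $m=1$ (where, taking expectations in~\eqref{Gauss-Sn} with $n=2$, all moments on the right are $S_0^{(N)}\equiv1$) are immediate. For $m\ge 2$, take expectations in the integral recurrence~\eqref{Gauss-Sn} with $n=2m$: the stochastic integral has zero mean, so $\Ex[S_{2m}^{(N)}(t)]=\Ex[S_{2m}^{(N)}(0)]+\int_0^t\Ex[\mathrm{drift}(u)]\,du$, the drift being a combination---with coefficients bounded uniformly in $N\ge1$---of $\sum_{j=0}^{2m-2}S_j^{(N)}S_{2m-2-j}^{(N)}$ and of $S_{2m-2}^{(N)}$. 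The key elementary point is the pointwise bound $|S_j^{(N)}(u)S_{2m-2-j}^{(N)}(u)|\le S_{2m-2}^{(N)}(u)$ for $0\le j\le 2m-2$: indeed Jensen's inequality gives $|S_j^{(N)}|\le (S_{2m-2}^{(N)})^{j/(2m-2)}$ and likewise for $S_{2m-2-j}^{(N)}$, and the two exponents sum to $1$. Thus $|\mathrm{drift}(u)|\le K_m\,S_{2m-2}^{(N)}(u)$ with $K_m$ independent of $N$, whence
\[
	\Ex[S_{2m}^{(N)}(t)] \le A_{2m} + K_m\int_0^t\Ex[S_{2m-2}^{(N)}(u)]\,du \le A_{2m} + K_m T\,C_{m-1},
\]
where $A_{2m}:=\sup_N S_{2m}^{(N)}(0)<\infty$ by~\eqref{moment-assumption}. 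This is the claim with $C_m=A_{2m}+K_mTC_{m-1}$; note that no Gronwall argument is needed, because the interaction drift of an even moment is dominated by the preceding even moment rather than by the same one.

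Two points of rigor remain. To legitimately take expectations one first works with the processes stopped at $\tau_R=\inf\{t:\max_i|x_i(t)|\ge R\}$, for which all moments are finite and the stochastic integrals are genuine mean-zero martingales; the estimate above is uniform in $R$, so letting $R\to\infty$ and invoking Fatou's lemma recovers it for the original process, whose non-explosion is guaranteed by~\cite{Cepa-Lepingle-1997}. The same localization shows $\Ex[\langle M_n^{(N)}\rangle_T]<\infty$, so $M_n^{(N)}$ is a true $L^2$ martingale and Doob's inequality applies as used. I expect the a priori even-moment bound to be the step requiring the most care, but it closes cleanly precisely because Jensen's inequality prevents the nonlinear interaction term from producing super-linear feedback.
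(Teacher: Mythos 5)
Your proposal is correct and follows essentially the same route as the paper's proof: Doob's maximal inequality reduces the claim to showing $\Ex[M_n^{(N)}(T)^2]=O(1/N)$, which in turn rests on the uniform-in-$N$ bound on even moments $\Ex[S_{2m}^{(N)}(t)]\le C_m$, proved by induction using the same pointwise inequality $S_j^{(N)}S_{2m-2-j}^{(N)}\le S_{2m-2}^{(N)}$. Your added localization step and the explicit Jensen justification of that inequality are welcome points of rigor but do not change the argument.
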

\begin{proof}
The arguments here are similar to those used in proving Lemma 3.4 in \cite{Trinh-Trinh-BLP}. For the reader's convenience, we sketch key ideas. 

Step 1. By using Doob's martingale inequality, for any given $\varepsilon > 0$, it holds that 
\[
	\Prob\left( \sup_{0 \le t \le T} |M_n^{(N)}(t)| \ge \varepsilon \right) \le \frac{1}{\varepsilon^2} \Ex[M_n^{(N)}(T)^2].
\]
Thus, it suffices to show that $\Ex[M_n^{(N)}(T)^2] \to 0$ as $N \to \infty$.

Step 2. The mean value $\Ex[M_n^{(N)}(T)^2] $ can be expressed as
\[
	\Ex[M_n^{(N)}(T)^2] = \frac{ \sigma^2 n^2}{N} \Ex \left [\int_0^T \frac{\sum_{i=1}^N x_i(u)^{2n-2}}{N}\right ] =  \frac{ \sigma^2 n^2}{N} \int_0^T \Ex[S_{2n-2}^{(N)}(t)] dt.
\] 

Step 3. Under the moment assumption~\eqref{moment-assumption}, we show that for even $n$, 
\begin{equation}\label{uniform-bdn}
	\Ex[S_n^{(N)}(t)] \le C_n, \quad (t \in [0, T], N \ge 2),
\end{equation}
for some constant $C_n$ depending only on $n$. The proof is complete by combining the three steps. Now, it remains to show the estimate~\eqref{uniform-bdn}. Denote by $m_n^{(N)}(t) = \Ex[S_n^{(N)}(t)]$. When $n$ is even, we use the inequality
\begin{align*}
	S_{j}^{(N)}(u) S_{n-2-j}^{(N)}(u) &= \frac1{N^2} \left(\sum_{i=1}^N x_i(u)^j \right)\left(\sum_{i=1}^N x_i(u)^{n-2-j} \right) \\
	&\le  \frac1{N^2} \left(\sum_{i=1}^N |x_i(u)|^j \right)\left(\sum_{i=1}^N |x_i(u)|^{n-2-j} \right)\\
	&\le \frac1{N} \left(\sum_{i=1}^N |x_i(u)|^{n-2} \right) = S_{n-2}^{(N)}(u)
\end{align*}
to deduce that 
\[
	0 \le m_n^{(N)}(t) \le a_n + \sigma^2(c+1)\frac{n(n-1)}2 \int_0^t m_{n-2}^{(N)}(u)du.
\]
Then, the conclusion follows by induction.
\end{proof}

\begin{theorem}\label{thm:G-measures}
	Assume the moment condition~\eqref{moment-assumption}. Assume further that the limiting moments $\{a_n\}$ satisfy
\begin{equation}\label{Gaussian-Carleman-initial}
	|a_n| \le (\Lambda n)^n, \quad \text{for all $n \ge 1$}, 
\end{equation}
where $\Lambda > 0$ is a constant. Then for any $t \ge 0$, there is a unique probability measure $\mu_t$ having moments $\{m_n(t)\}_{n \ge 0}$. Moreover, the probability measure-valued process $\mu_t$ is continuous and the empirical measure process $(\mu_t^{(N)})_{0 \le t \le T}$ converges to $(\mu_t)_{0 \le t \le T}$ in probability under the topology of uniform convergence. 
\end{theorem}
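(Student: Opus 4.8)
The plan is to split Theorem~\ref{thm:G-measures} into four pieces: a deterministic Carleman‑type growth bound for the limiting moment processes (which gives uniqueness), existence of $\mu_t$, continuity of $t\mapsto\mu_t$, and the process‑level convergence via a tightness argument. First I would show, by induction on $n$ from the recursion~\eqref{Gauussian-recurrence-moment}, that for each $T>0$ there is a constant $M=M(\sigma,c,T,\Lambda)$ with $\sup_{0\le t\le T}|m_n(t)|\le (Mn)^n$ for all $n$. Feeding the hypothesis~\eqref{Gaussian-Carleman-initial} and the inductive bounds into~\eqref{Gauussian-recurrence-moment}, and estimating the convolution sum via $j^{\,j}(n-2-j)^{\,n-2-j}\le(n-2)^{\,n-2}$ and $n(n-1)\le n^2$ exactly as in the proof of Lemma~\ref{lem:moment-bound}, the induction closes once $M$ is large enough. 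Since then $m_{2n}(t)\le(2Mn)^{2n}$, Carleman's condition holds for $\{m_k(t)\}_k$ at every $t$, so any probability measure with moments $\{m_k(t)\}_k$ is unique. For existence, fix $t$: by Theorem~\ref{thm:G-moments}, $S_n^{(N)}(t)\to m_n(t)$ in probability for each $n$, and a diagonal extraction gives a subsequence $N_k$ along which $S_n^{(N_k)}(t)\to m_n(t)$ almost surely simultaneously for all $n$; on this event the Hankel matrices $[\,S_{i+j}^{(N_k)}(t)\,]_{i,j=0}^{L}$ — moment matrices of the probability measures $\mu_t^{(N_k)}$ — are positive semidefinite, hence so are their limits $[\,m_{i+j}(t)\,]_{i,j=0}^{L}$ for every $L$. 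Hamburger's theorem then yields a positive measure on $\R$ with moments $\{m_k(t)\}$, and $m_0(t)=1$ makes it a probability measure, which by uniqueness is the desired $\mu_t$.

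Continuity of $t\mapsto\mu_t$ follows from continuity of each $m_n(\cdot)$ on $[0,T]$ (immediate from~\eqref{Gauussian-recurrence-moment}) together with the uniform bound above: if $t_k\to t$, then $\sup_k m_2(t_k)<\infty$ makes $\{\mu_{t_k}\}$ tight, the uniform bounds on all even moments give uniform integrability, so any weak subsequential limit $\nu$ has moments $\lim_j m_n(t_{k_j})=m_n(t)$ and hence equals $\mu_t$; thus $\mu_{t_k}\to\mu_t$ weakly and $(\mu_t)_{0\le t\le T}\in C([0,T],\cP(\R))$.

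For the process‑level statement I would prove that $\{(\mu_t^{(N)})_{0\le t\le T}\}_N$ is tight in $C([0,T],\cP(\R))$ with the uniform metric, and that every subsequential limit is the deterministic path $(\mu_t)$. Uniform‑in‑$N$ compact containment comes from the $n=2$ case of~\eqref{Gauss-Sn}, namely $S_2^{(N)}(t)=S_2^{(N)}(0)+M_2^{(N)}(t)+\sigma^2(c+1-c/N)t$ with $M_2^{(N)}$ a martingale, so $\sup_{t\le T}S_2^{(N)}(t)\le S_2^{(N)}(0)+\sup_{t\le T}|M_2^{(N)}(t)|+\sigma^2(c+1)T$; by Doob and the $L^1$‑smallness of $M_2^{(N)}(T)^2$ noted in Lemma~\ref{lem:Gaussian-MTG}, $\sup_{t\le T}S_2^{(N)}(t)$ is bounded in probability uniformly in $N$, confining $\mu_t^{(N)}$ for all $t$ at once to a sublevel set of the lower semicontinuous map $\nu\mapsto\langle\nu,x^2\rangle$, which is compact. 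Path regularity comes from It\^o's formula~\eqref{Ito-f}: for $f$ in a fixed countable convergence‑determining subset of $C_b^2(\R)$, the increment $\langle\mu_t^{(N)},f\rangle-\langle\mu_s^{(N)},f\rangle$ is a martingale increment with quadratic variation $\le\sigma^2\|f'\|_\infty^2(t-s)/N$ plus a drift of modulus $\le C(\sigma,c)(\|f'\|_\infty+\|f''\|_\infty)(t-s)$ (using $|f'(x)-f'(y)|\le\|f''\|_\infty|x-y|$ for the mean‑field term), so a fourth‑moment Kolmogorov estimate gives tightness of $\{\langle\mu_\cdot^{(N)},f\rangle\}_N$ in $C([0,T])$. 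These two ingredients give tightness of $\{(\mu_\cdot^{(N)})\}_N$ in $C([0,T],\cP(\R))$. Any subsequential limit $(\tilde\mu_t)$ has, by Theorem~\ref{thm:G-moments} and the uniqueness of the first step, moments $\langle\tilde\mu_t,x^n\rangle=m_n(t)$ for all $n,t$, hence $\tilde\mu_t=\mu_t$; since the only limit point is the deterministic path $(\mu_t)$, the convergence holds in probability.

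I expect the real work to be in the last step. The delicate points are the uniform‑in‑$N$, uniform‑in‑$t$ second‑moment control giving compact containment, the verification of the precise tightness criterion for $C([0,T],\cP(\R))$‑valued random elements out of per‑test‑function tightness, and the use of the Carleman bound to force every subsequential weak limit to coincide with $(\mu_t)$ — which is what upgrades distributional convergence along subsequences to convergence in probability. By contrast, Steps~1–3 are essentially bookkeeping once the moment recursion and Theorem~\ref{thm:G-moments} are in hand.
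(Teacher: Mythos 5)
Your proposal is correct, and its first half coincides with the paper's: the inductive Carleman-type bound $\sup_{t\le T}|m_n(t)|\le (Mn)^n$ is exactly Lemma~\ref{lem:initial-moments} (the paper closes the induction only over even $n$, using $m_jm_{n-2-j}\le m_{n-2}$, which suffices for Carleman; your product bound $j^{\,j}k^{\,k}\le (j+k)^{j+k}$ handles all $n$ and is if anything cleaner), and your explicit existence step via positive semidefiniteness of the limiting Hankel matrices and Hamburger's theorem makes precise what the paper leaves implicit. Where you genuinely diverge is the process-level statement: the paper disposes of it in one sentence by invoking Appendix~A of \cite{Trinh-Trinh-BLP}, whose mechanism is the direct implication ``uniform-in-$t$ convergence of all moment processes plus moment determinacy of the limits $\Rightarrow$ uniform weak convergence'' (argued by contradiction along a sequence of times $t_N\to t^*$, using continuity of the $m_n$), with no path-space tightness needed. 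You instead rebuild the conclusion from scratch: compact containment from the second-moment process, per-test-function Kolmogorov tightness from the It\^o decomposition~\eqref{Ito-f}, tightness in $C([0,T],\cP(\R))$, and identification of every subsequential limit with the deterministic path via uniform integrability (supplied by boundedness in probability of $S_{2n}^{(N)}$, which Theorem~\ref{thm:G-moments} provides) and determinacy, after which convergence in distribution to a constant upgrades to convergence in probability. Your route is heavier but self-contained and reusable in settings where one does not already know that \emph{all} moment processes converge uniformly; the paper's route is shorter precisely because it exploits that stronger input, making the tightness machinery unnecessary. Both are sound.
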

\begin{proof}
We will show in Lemma~\ref{lem:initial-moments} that for each $t$, the sequence $\{m_n(t)\}_{n \ge 0}$ satisfies the same type of estimate as $\{a_n\}$ (equation~\eqref{Gaussian-Carleman-type}). Such estimate clearly implies Carleman's condition, and thus, the limiting moments $\{m_n(t)\}$ uniquely determine a probability measure $\mu_t$. In other words, $\mu_t \in \cP_G$.

Since each moment process $m_n(t)$ is continuous and $\mu_t$ is uniquely determined by moments, it follows that $\mu_t$ is a continuous process of probability measures and the empirical measure process $(\mu_t^{(N)})_{0 \le t \le T}$ converges to $(\mu_t)_{0 \le t \le T}$ in probability under the topology of uniform convergence
(see Appendix A in \cite{Trinh-Trinh-BLP}). The proof is complete.
\end{proof}

\begin{lemma}\label{lem:initial-moments}
Assume that the sequence of limiting moments $\{a_n\}$ satisfies the condition~\eqref{Gaussian-Carleman-initial}. Then for any $T > 0$, there is a constant $K > 0$ such that 
\begin{equation}\label{Gaussian-Carleman-type}
	|m_n(t)| \le (K n)^n, \quad \text{for all $t \in  [0, T]$, and for all $n \ge 0$}.
\end{equation}
\end{lemma}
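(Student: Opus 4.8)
The plan is a straightforward induction on $n$, exploiting the fact that the recursion~\eqref{Gauussian-recurrence-moment} is \emph{explicit}: the right-hand side of the equation for $m_n$ involves only $m_j$ with $j \le n-2$, so there is no self-dependence and no Gronwall-type argument is needed. Fix $T > 0$; I will choose a constant $K > 0$ (depending on $\Lambda, \sigma, c, T$ but not on $n$) only at the end, and prove $|m_n(t)| \le (Kn)^n$ for all $t \in [0,T]$ and all $n \ge 0$, with the convention $0^0 = 1$. The base cases are immediate: $m_0 \equiv 1$, and $|m_1| = |a_1| \le \Lambda \le K$ as soon as $K \ge \Lambda$.

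For the inductive step with $n \ge 2$, assuming the bound for all smaller indices, the one elementary inequality I would invoke is that for $0 \le j \le n-2$,
\[
	j^j (n-2-j)^{n-2-j} \le (n-2)^j (n-2)^{n-2-j} = (n-2)^{n-2} \le n^{n-2},
\]
so, by the inductive hypothesis, $|m_j(u)\, m_{n-2-j}(u)| \le K^{n-2} n^{n-2}$ for $u \in [0,T]$. Substituting into~\eqref{Gauussian-recurrence-moment}, bounding the sum (which has $n-1 \le n$ terms) term by term, bounding each time-integral by its length $\le T$, and using $|a_n| \le (\Lambda n)^n$ from~\eqref{Gaussian-Carleman-initial}, I expect to arrive at an estimate of the shape
\[
	|m_n(t)| \le \Lambda^n n^n + \frac{\sigma^2 (c+1) T}{2}\, K^{n-2} n^n ,
\]
where the surviving power $n^n$ arises from combining the polynomial prefactors $\tfrac{cn}{2}$ and $\tfrac{n(n-1)}{2}$ (each $O(n^2)$) with $(n-2)^{n-2} \le n^{n-2}$.

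It then remains to choose $K$ so that $\Lambda^n + \tfrac12 \sigma^2 (c+1) T\, K^{n-2} \le K^n$ for every $n \ge 1$. Taking $K \ge \max\{\, 2\Lambda,\ \sigma\sqrt{(c+1)T}\,\}$ does the job, since then $\Lambda^n \le 2^{-n} K^n \le \tfrac12 K^n$ while $\tfrac12 \sigma^2 (c+1) T\, K^{n-2} \le \tfrac12 K^n$, and the two contributions sum to at most $K^n$; this closes the induction and yields~\eqref{Gaussian-Carleman-type}. There is no serious obstacle of a conceptual nature here — the only point requiring attention is that $K$ must be selected once and for all, uniformly in $n$, and the reason this is possible is precisely the gap between the $K^{n-2}$ actually produced by one step of the recursion and the $K^n$ permitted by the target bound, which leaves exactly the room needed to absorb both the fixed constant $\tfrac12\sigma^2(c+1)T$ and the geometric factor coming from the hypothesis on $\{a_n\}$.
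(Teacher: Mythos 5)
Your proof is correct and rests on the same inductive skeleton as the paper's: same recursion, essentially the same choice of $K$ (the paper takes $K^2=\max\{\sigma^2(c+1)T,\,2\Lambda^2\}$, you take $K\ge\max\{2\Lambda,\sigma\sqrt{(c+1)T}\}$; both exploit the gap between the $K^{n-2}$ produced by one step and the $K^n$ allowed), and the same final absorption into $\tfrac{K^n}{2}n^n+\tfrac{K^n}{2}n^n$. The one genuine difference is how the convolution sum $\sum_j m_j m_{n-2-j}$ is controlled. The paper reuses the moment inequality $m_j m_{n-2-j}\le m_{n-2}$ from Step~3 of the martingale lemma, which relies on positivity of even moments and is therefore stated only for even $n$ (the odd case is left implicit, e.g.\ via $|m_n|\le m_{2n}^{1/2}$ at the cost of a constant). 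You instead bound each factor directly by the inductive hypothesis and use the elementary inequality $j^j(n-2-j)^{n-2-j}\le(n-2)^{n-2}$, which treats even and odd $n$ uniformly and requires no positivity of the $m_j$; this is arguably a cleaner and more self-contained way to close the induction, at the price of not reusing the estimate already established earlier in the paper. Both routes land on the identical intermediate bound $|m_n(t)|\le(\Lambda n)^n+\tfrac{\sigma^2(c+1)T}{2}K^{n-2}n^n$, so there is nothing to fix.
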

\begin{proof}
We take a constant $K$ as
\begin{equation}\label{K}
	K = \max\{\sigma^2 (c+1) T, 2\Lambda^2 \}^{1/2}, \quad\text{or} \quad K^2 = \max\{\sigma^2 (c+1) T, 2\Lambda^2 \}.
\end{equation}
(The reason for that will be clear from the argument below.) Since $m_0(t) = 1$ and $m_1(t) = a_1$, it is clear that for $k = 0, 1$,
\[
	|m_k(t)| \le (K k)^k, \quad \text{for all $t \in  [0, T]$}.
\]
We now show the estimate~\eqref{Gaussian-Carleman-type} by induction. Assume that the above estimate holds for any $k = 0,1,\dots, n-1, (n \ge 2)$. We use the definition of $m_n(t)$ to show that it holds for $k = n$. 

Similar to Step 3 in the proof of Lemma~\ref{lem:Gaussian-MTG}, for even $n$, we deduce that 
\begin{align*}
	0 \le m_n(t) &\le a_n +  \sigma^2(c+1)\frac{n(n-1)}2 \int_0^t m_{n-2}(u)du.
\end{align*}
Then using the condition on the initial values and the induction assumption, we get that
\begin{align*}
	0 \le m_n(t) 
	&\le  (\Lambda n)^n + \frac{\sigma^2 (c+1) t}{2} n(n-1) (K(n-2))^{n-2} \\	&\le  (\Lambda n)^n + \frac{\sigma^2 (c+1) T}{2} K^{n-2} n^n \\
	&\le \frac{K^n}2 n^n +  \frac{K^n}2 n^n = (Kn)^n.
\end{align*}
Here in the last line, we have used the condition~\eqref{K}. The proof is complete.
\end{proof}

\subsection{Identifying the limiting process}
For simplicity, let $\sigma = 1$ in this section. Under the assumption of Theorem~\ref{thm:G-measures}, it is clear that $\mu_t \in \cP_G$ for any $t > 0$. When the initial condition is trivial, that is, $\lambda_i(0) = 0, i = 1, \dots, N$, recall that the distribution of $\{\lambda_i(t) /\sqrt t\}_{i=1}^N$ is the Gaussian beta ensemble. Thus, as a consequence of Theorem~\ref{thm:MKT-beta-ensembles}, the Gaussian distribution $\Normal(0, t)$ is the MKT of the limiting measure $\mu_t$. Denote by $\rho_t^{(G)}$ the limiting measure process in this trivial case. The aim of this section is to show that 

\begin{theorem}\label{thm:G-MKT} Under the same assumption as in Theorem~{\rm\ref{thm:G-measures}}, it holds that
\[
	\mu_t = \mu_0 \boxplus_c \rho_t^{(G)}.
\]
\end{theorem}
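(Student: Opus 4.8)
The plan is to show that the moments $\{m_n(t)\}$ of $\mu_t$ satisfy the moment relation characterizing the $c$-convolution $\mu_0 \boxplus_c \rho_t^{(G)}$, and then appeal to the uniqueness coming from Carleman's condition (all three measures lie in $\cP_G$). By Definition~\ref{defn:c-convolution}, the assertion $\mu_t = \mu_0 \boxplus_c \rho_t^{(G)}$ is equivalent to $\cM_c(\mu_t) = \cM_c(\mu_0) * \cM_c(\rho_t^{(G)})$. From Theorem~\ref{thm:MKT-beta-ensembles}(i) applied with time scaling, $\cM_c(\rho_t^{(G)}) = \Normal(0,t)$, which is the law of $b_t$ for a standard Brownian motion. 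Let $\xi$ be a random variable with law $\cM_c(\mu_0)$, independent of $b$; then the target statement becomes: $\cM_c(\mu_t)$ is the law of $\xi + b_t$. So it suffices to prove that the moments $h_n(t)$ of the law of $\xi + b_t$ and the moments $m_n(t)$ of $\mu_t$ are linked by the recurrence~\eqref{moment-MKR} for every fixed $t$, since then Theorem~\ref{thm:MKT-moment} (together with the fact that the law of $\xi + b_t$ is moment-determinate, its moments growing at the $\cP_G$ rate by Lemma~\ref{lem:moment-bound}) yields $\cM_c(\mu_t) = \mathrm{Law}(\xi + b_t)$.

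First I would set up both sides as functions of $t$. On the analytic side, write $h_n(t) = \Ex[(\xi + b_t)^n]$; expanding by the binomial theorem and using the Gaussian moments $\Ex[b_t^{2k}] = (2k-1)!!\, t^k$ gives a clean formula, and differentiating in $t$ (or using the heat equation $\partial_t h_n = \tfrac12 n(n-1) h_{n-2}$) produces an ODE system for $\{h_n(t)\}$ with initial data $h_n(0) = \Ex[\xi^n]$, the moments of $\cM_c(\mu_0)$. On the measure side, the limiting moments $\{m_n(t)\}$ satisfy the recurrence~\eqref{Gaussian-recurrence-moment} from Theorem~\ref{thm:G-moments} (with $\sigma = 1$), namely $\dot m_n = \tfrac{cn}{2}\sum_{j=0}^{n-2} m_j m_{n-2-j} + \tfrac12 n(n-1) m_{n-2}$, with $m_n(0) = a_n$ the moments of $\mu_0$. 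The core of the proof is then a purely algebraic claim: if at some time $t$ the sequences $\{h_n(t)\}$ and $\{m_n(t)\}$ satisfy~\eqref{moment-MKR}, then their $t$-derivatives are consistent, i.e. the two ODE systems propagate the Markov--Krein relation. Since~\eqref{moment-MKR} holds at $t = 0$ (because $\cM_c(\mu_0)$ has moments $\{h_n(0)\}$ by hypothesis, and $a_n = m_n(0)$), it then holds for all $t$.

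To verify the propagation it is cleanest to work with the generating functions of Lemma~\ref{lem:phi-psi}. Let $\phi(z,t) = \sum_n \tfrac{(c)_n}{n!} h_n(t) z^{-c-n}$ and $\psi(z,t) = \sum_n m_n(t) z^{-n-1}$; Lemma~\ref{lem:phi-psi} says~\eqref{moment-MKR} at time $t$ is equivalent to $\phi_z = -c\,\phi\,\psi$. The heat-type ODE for $h_n$ translates into $\phi_t = \tfrac12 \phi_{zz}$ (the factor $n(n-1)$ and the shift by $2$ are exactly the second $z$-derivative on the series), while the recurrence~\eqref{Gaussian-recurrence-moment} translates into a Burgers-type equation for $\psi$, roughly $\psi_t = \tfrac12 \psi_{zz} + \tfrac{c}{?}\,\partial_z(\psi^2) + \dots$ — the convolution term $\sum m_j m_{n-2-j}$ being the coefficient of $\psi^2$. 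One then checks by direct differentiation that the relation $\phi_z = -c\phi\psi$ is preserved: compute $\partial_t(\phi_z + c\phi\psi)$, substitute the two evolution equations, and use $\phi_z = -c\phi\psi$ itself to collapse everything to zero. I expect the main obstacle to be exactly this bookkeeping step — matching the quadratic term in the $\psi$-evolution against $\phi_z\phi_z/\phi$ arising from differentiating the product, and getting the constants ($\tfrac12$ vs $\tfrac{cn}{2}$, the Pochhammer shifts) to cancel. Once that identity is in hand, uniqueness of solutions to the ODE systems (all moment sequences satisfy $\cP_G$-type bounds, via Lemma~\ref{lem:initial-moments} and Lemma~\ref{lem:moment-bound}) upgrades "the relation holds at $t=0$" to "the relation holds for all $t \ge 0$", and Theorem~\ref{thm:MKT-moment} finishes the identification $\cM_c(\mu_t) = \mathrm{Law}(\xi+b_t) = \cM_c(\mu_0)*\cM_c(\rho_t^{(G)})$, i.e. $\mu_t = \mu_0 \boxplus_c \rho_t^{(G)}$.
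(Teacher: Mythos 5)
Your proposal is correct and follows essentially the paper's second (formal power series) proof of Theorem~\ref{thm:G-MKT}: the same generating functions $\phi$ and $\psi$, the heat equation for $\phi$, the Burgers-type equation for $\psi$, the characterization $\phi_z=-c\phi\psi$ from Lemma~\ref{lem:phi-psi}, and the conclusion via Theorem~\ref{thm:MKT-moment}. The only organizational difference is the propagation step: rather than differentiating the defect $\phi_z+c\phi\psi$ and ``using the relation itself to collapse everything to zero'' (which, as phrased, is mildly circular), the paper sets $\tilde\psi:=-\phi_z/(c\phi)$, verifies that it satisfies the same Burgers equation, and invokes uniqueness of the resulting triangular ODE system with identical initial data---a cleaner way to close exactly the argument you sketch.
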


We formulate the result in a different way. Let $\xi$ be a random variable with distribution $\nu_0 = \cM_{c}(\mu_0)$. Let $b_t$ be a standard Brownian motion independent of $\xi$. Then the above statement is equivalent to the statement that 
\[
	\cL(\xi + b_t) = \cM_{c}(\mu_t).
\]
Here $\cL(X)$ denotes the law/distribution of a random variable $X$.

\subsubsection{A direct approach}
\begin{proof}[Proof of Theorem~\rm{\ref{thm:G-MKT}}]
Let
\[
	\psi(t, z) = \int_\R \frac{d\mu_t(u)}{z - u}, \quad t \ge 0, z \in \C_+
\]
be the Stieltjes transform of the limiting process $\mu_t$. For $z \in \C_+$, consider the complex valued function $f(x) = 1/(z-x)$. Note that $\psi(t, z) = \bra{\mu_t, f}$. By using the formula~\eqref{Ito-f} with that $f$, then taking the limit as $N \to \infty$, we arrive at the following partial differential equation (PDE)
\[
	\psi_t = -c \psi \psi_z + \frac12 \psi_{zz}.
\]
Here $\psi_t, \psi_z$ and $\psi_{zz}$ denote the partial derivative with respect to $t$, the partial derivate with respect to $z$ and the second derivative with respect to $z$, respectively. 
It is a complex Burger equation and it admits a unique solution \cite{Cepa-Lepingle-1997}.

Define 
\[
	u(t, z) = \Ex[(z - \xi - b_t)^{-c}], \quad t \ge 0, z \in \C_+.
\]
Then $u$ satisfies the heat equation
\(
	u_t = \frac12 u_{zz}.
\)
Next, we make the change of variables 
\[
	M(t, z) = -\frac1c (\log u)_z = -\frac1c \frac{u_z}{u}.
\]
We derive the PDE for $M$ as follows.
By a direct calculation, we get that
\[
	M_t = -\frac1c (\log u)_{zt} = -\frac1c \left(\frac {u_t}{u} \right)_z =  -\frac1{2c}\left(\frac {u_{zz}}{u} \right)_z,
\]
\[
	M_z = -\frac1c \left(\frac {u_z}{u} \right)_z = -\frac1c \left(\frac {uu_{zz} - u_z^2}{u^2} \right) = -\frac1c \frac {u_{zz}}{u} + cM^2,
\]
and
\[
	M_{zz} =  -\frac1c \left( \frac {u_{zz}}{u} \right)_z+ c(M^2)_z = 2M_t + 2c M M_z.
\]
Therefore,
\[
	M_t = -c M M_z + \frac12 M_{zz}.
\]

Note that by the construction, $M$ and $\psi$ have the same initial condition at $t = 0$. Since the Burger equation admits a unique solution, it follows that 
\[
	M = \psi, \quad \text{or}\quad \psi = -\frac1c \frac{u_z}{u}.
\]
This relation implies that $\cL(\xi + b_t) = \cM_{c}(\mu_t)$ by Lemma~\ref{lem:derivative} below. The proof of Theorem~\ref{thm:G-MKT} is complete.
\end{proof}

\begin{lemma}\label{lem:derivative}
Let $\nu$ and $\mu$ be two probability measures with
\(
	\int_\R \log(1 + |u|) d\mu(u) < \infty.
\)
Let
\[
	\phi(z) = \int_{\R}\frac{d\nu(x)}{(z - x)^c}, \quad \psi(z) =  \int_{\R}\frac{d\mu(x)}{z - x}, \quad z \in \C_+.
\]
Then the following two statements are equivalent
\begin{itemize}
	\item[\rm(i)] $\nu = \cM_{c}(\mu)$;
	\item[\rm(ii)] $\log(\phi)_z = -c \psi$.
\end{itemize}
\end{lemma}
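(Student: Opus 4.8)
The plan is to reduce the equivalence to the defining Markov--Krein relation by integrating (resp.\ differentiating) the identity in (ii) along the upper half-plane. First observe that both $\phi$ and $\psi$ are analytic on $\C_+$: the integrability condition $\int \log(1+|u|)\,d\mu(u) < \infty$ guarantees that $\int \log(z-u)\,d\mu(u)$ is well defined and analytic there (the branch of $\log$ being the one fixed in the introduction), and differentiating under the integral sign gives $\frac{d}{dz}\int \log(z-u)\,d\mu(u) = \psi(z)$. Likewise $\phi$ is analytic and, since $\nu$ is a probability measure, $\phi(z) \to 0$ as $z \to \infty$ in any Stolz angle while $z^c \phi(z) \to 1$; in particular $\phi$ is nonvanishing for $z$ in a neighbourhood of $\infty$, so $\log \phi$ is locally well defined there and $(\log\phi)_z = \phi_z/\phi$ makes sense as a meromorphic object, analytic wherever $\phi \neq 0$.

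For the implication (i) $\Rightarrow$ (ii): if $\nu = \cM_c(\mu)$ then by definition $\phi(z) = \exp\!\big(-c\int_\R \log(z-u)\,d\mu(u)\big)$ for $z \in \C\setminus\R$, so in particular $\phi$ never vanishes on $\C_+$, $\log\phi(z) = -c\int_\R \log(z-u)\,d\mu(u)$ is a genuine analytic logarithm, and differentiating in $z$ yields $(\log\phi)_z = -c\psi$, which is (ii). For the converse (ii) $\Rightarrow$ (i): assuming $(\log\phi)_z = -c\psi$ on $\C_+$, both sides are analytic (on the region where $\phi\neq 0$, which includes a neighbourhood of $\infty$ in $\C_+$), so $\log\phi(z)$ and $-c\int_\R \log(z-u)\,d\mu(u)$ have the same $z$-derivative there and hence differ by a constant $K$. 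To pin down $K = 0$ I would let $z \to \infty$ along the positive imaginary axis (or more conveniently let $z\to+\infty$ along the real axis from above, using continuity up to the boundary away from the supports): $\phi(z) = z^{-c}(1 + o(1))$ gives $\log\phi(z) = -c\log z + o(1)$, while $\int_\R \log(z-u)\,d\mu(u) = \log z + o(1)$ by dominated convergence using the $\log(1+|u|)$ bound; hence $K=0$, so $\log\phi(z) = -c\int_\R\log(z-u)\,d\mu(u)$, i.e.\ $\phi(z) = \exp(-c\int_\R\log(z-u)\,d\mu(u))$, which is exactly the Markov--Krein relation, so $\nu = \cM_c(\mu)$ by uniqueness.

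The main obstacle is the bookkeeping around branch cuts and the constant of integration: one must make sure that $\log\phi$ really is a single-valued analytic function on the relevant part of $\C_+$ (it is, near $\infty$, because $\phi$ is nonvanishing and $\C_+$ minus a large ball is simply connected, but in principle $\phi$ could have zeros in $\C_+$), and that the asymptotic expansions $\phi(z)\sim z^{-c}$ and $\int\log(z-u)\,d\mu(u)\sim\log z$ are justified uniformly enough to extract $K=0$. A clean way to finesse the zero issue is to work only on a neighbourhood of $\infty$ to establish the identity $\phi = \exp(-c\int\log(z-\cdot)\,d\mu)$ there, and then invoke analytic continuation: the right-hand side is analytic and nonvanishing on all of $\C\setminus\R$, so the identity propagates, and in particular $\phi\neq 0$ throughout $\C_+$ a posteriori. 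With that, the equivalence is complete.
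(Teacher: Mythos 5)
Your proposal is correct and follows essentially the same route as the paper: differentiate the Markov--Krein relation for (i)$\Rightarrow$(ii), and for (ii)$\Rightarrow$(i) antidifferentiate, then kill the integration constant by comparing $\log[\phi(z)z^c]$ with $-c\int\log\frac{z-u}{z}\,d\mu(u)$ as $z\to\infty$. Your extra care about possible zeros of $\phi$ and the analytic-continuation step is a reasonable refinement of a point the paper leaves implicit, but it does not change the argument.
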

\begin{proof}
It is clear that (i) implies (ii), because 
\[
	\left[\int_\R \log(z - u) d\mu(u)\right]_z  =  \int_{\R}\frac{d\mu(u)}{z - u} = \psi(z).
\]
We now show that (ii) implies (i). It follows from the assumption $\log(\phi)_z = -c \psi$ that
\[
	-c\int_\R \log(z - u) d\mu(u) = \log \phi(z) + const.
\]
Adding both sides by $c \log z$, we get 
\[	
	-c\int_\R \log \frac{(z - u)}{z} d\mu(u) = \log [\phi(z) z^c]+ const.
\]
By letting $z \to \infty$, it follows that the constant vanishes, implying the MKR. The proof is complete.
\end{proof}

\subsubsection{A formal series approach}
We now give another proof of Theorem~\ref{thm:G-MKT} based on a formal series approach.
Recall that the limiting moment processes are recursively defined by $m_0(t) = 1, m_1(t) = a_1,$
\begin{align*}
	m_n(t) &= a_n +  \frac{cn}2 \sum_{j=0}^{n-2}\int_0^t m_j(u) m_{n-2-j}(u) du\notag\\
	&\quad +  \frac12 n(n-1) \int_0^t m_{n-2}(u) du, \quad (n \ge 2).
\end{align*}
Equivalently, they satisfy the following ordinary differential equations (ODEs)
\begin{align*}
	&\frac {d m_n(t)}{dt} =  \frac{cn}2 \sum_{j=0}^{n-2}  m_j(t) m_{n-2-j}(t) +  \frac12 n(n-1) m_{n-2}(t),\quad n \ge 1,
\end{align*}
with initial value $m_n(0) = a_n.$
Observe that $m_n(t)$ is a polynomial in $t$.

Consider a formal power series in $t$ and $z$,
\[
	\psi(t, z) = \sum_{n=0}^{\infty} {m_n(t)} \frac1{ z^{n+1}}.
\]
This is exactly the series expansion of the Stieltjes transform. However, we consider formal power series to avoid convergence issue. The formal partial derivatives with respect to $t$ and $z$ are defined as usual. Then it is clear that the ODEs for $m_n(t)$ are equivalent to the formal PDE
\begin{equation}\label{G-psi}
	\psi_t = -c \psi \psi_z + \frac12 \psi_{zz}.
\end{equation}

Let $h_n(t) = \Ex[(\xi + b_t)^n]$ be the $n$th moment of $\xi + b_t$. Note that $\xi + b_t$ is a Brownian motion starting at $\xi$. We can use It\^o's formula to derive the ODEs
\[
	\frac{d h_n(t)}{dt} = \frac12 n(n-1) h_{n-2}(t), \quad n \ge 1,
\]
with initial condition $h_n(0) = \Ex[\xi^n]$. Again, $h_n(t)$ is a polynomial in $t$. Consider the formal series 
\[
	\phi(t, z) = \sum_{n=0}^\infty \frac{(c)_n}{n!} h_n(t) \frac{1}{z^{c+n}}.
\]
Then the ODEs for $h_n(t)$ are equivalent to the following formal PDE
\begin{equation}\label{G-PDE-phi}
	\phi_t = \frac12 \phi_{zz}.
\end{equation}

%Note that the moment relation of $\{a_n(t)\}$ and $\{m_n(t)\}$ is equivalent to the relation of formal power series 
%\begin{equation}\label{G-phi-psi}
%	\phi_z = -c \phi \psi.
%\end{equation}

Let $\{\tilde m_n(t)\}$ be the moment process defined by the recurrence relation~\eqref{moment-MKR}, that is,
\begin{equation*}
	(c)_n h_n(t) = c \sum_{i=0}^{n-1}  \frac{(n-1)!}{i!} (c)_i h_i(t) \tilde m_{n-i}(t), \quad n = 1, 2,\dots, \quad (\tilde m_0 = 1).
\end{equation*}
Then $\tilde m_n(t)$ is a polynomial in $t$. Define the formal power series 
\[
	\tilde \psi (t, z) = \sum_{n=0}^{\infty} {\tilde m_n(t)}\frac1{ z^{n+1}}.
\]
It follows from Lemma~\ref{lem:phi-psi} that 
\[
	\tilde \psi = -\frac1c \frac{\phi_z}{\phi}.
\]
By the same calculation as in the previous subsection, we get the formal PDE for $\tilde \psi$,
\[
	\tilde \psi_t = -c \tilde \psi \tilde \psi_z + \frac12 \tilde \psi_{zz}.
\]
This PDE implies ODEs for $\tilde m_n(t)$ which are exactly the same as those for $m_n(t)$. We conclude that $\tilde m_n(t) \equiv m_n(t)$, because they have the same initial condition. Consequently, moments of $(\xi + b_t)$ and those of $\mu_t$ satisfy the relation~\eqref{moment-MKR}. It follows from Theorem~\ref{thm:MKT-moment} that
\[
	\cL(\xi + b_t) = \cM_{c}(\mu_t).
\]
The proof is complete.\qed

\section{Beta Laguerre processes}\label{sect:L}
The object in this section is the so-called beta Laguerre processes $0 \le \lambda_1(t) \le \lambda_2(t) \le \cdots \le \lambda_N(t)$ which satisfy the following system of SDEs 
\begin{equation}\label{bLP}
	d\lambda_i(t)=\sqrt{2\lambda_i(t)}  d  b_i(t)   + \alpha  d  t + \frac\beta2 \sum\limits_{j : j \neq i} \dfrac{2\lambda_i(t)}{\lambda_i(t) - \lambda_j(t)} dt,\quad i = 1, \dots, N.
\end{equation}
Here $\alpha > 1/2, \beta > 0$ and $\{b_i(t)\}_{i=1}^N$ are independent standard Brownian motions. 
When $\beta \ge 1$, the above system of SDEs has a unique strong solution  which never collides \cite{Graczyk-Malecki-2014}. For $\beta \in (0,1)$, the eigenvalue processes are defined to be the the squared of type B radial Dunkl processes \cite{Demni-2007-arxiv}. Under the trivial initial condition $\lambda_i(0) = 0, i = 1, \dots, N$, it is known that for $t > 0$, the joint distribution of $\{\lambda_i(t)/t\}_{i=1}^N$ follows the ordered beta Laguerre ensemble 
\begin{equation*}\label{LbE-ordered}
(\lambda_1,\dots,\lambda_N) \propto	\prod_{i<j}|\lambda_j - \lambda_i|^{\beta} \prod_{l=1}^N \lambda_l^{\alpha-1} e^{-\lambda_l}, \quad (0 \le \lambda_1 \le  \cdots \le \lambda_N).
\end{equation*}

We also consider the scaled processes $x_i(t) = \sigma \lambda_i(t)$, for given $\sigma >0$, which satisfy
\begin{equation}
	dx_i(t) = \sqrt \sigma \sqrt{2x_i(t)}  d  b_i(t)   + \sigma \alpha  d  t + \sigma\frac cN \sum\limits_{j : j \neq i} \dfrac{2x_i(t)}{x_i(t) - x_j(t)} dt, \quad i = 1, \dots, N.
\end{equation}
Here we have replaced $\beta$ by ${2c}/N$ in a high temperature regime.

\subsection{Mean-field limit for beta Laguerre processes}
We use the same notations as in the Gaussian case for the empirical measure process
\(
	\mu_t^{(N)}
\)
and its moment processes
\(
	S_n^{(N)}(t), n = 0,1,2,\dots.
\)
The moment method has been used in \cite{Trinh-Trinh-BLP} to study beta Laguerre processes of Ornstein--Uhlenbeck type. Since our model here is slightly different, we only sketch key ideas.

{Step 1.} For $f \in C^2(\R)$,  by It\^o's formula, we obtain that
\begin{align}
	d\bra{\mu^{(N)}_t, f} 
	&=\sqrt\sigma \frac1N\sum_{i=1}^N f'(x_i)\sqrt{2x_i}  d b_i(t) + \sigma \bra{\mu_t^{(N)}, \alpha f'(x) + x f''(x)} dt \notag\\
	&\quad+ \sigma {c}\iint \frac{x f'(x) - y f'(y)}{x - y} d\mu_t^{(N)}(x)d\mu_t^{(N)}(y)  dt \notag\\
	&\quad - \sigma\frac{c}{N} \bra{\mu_t^{(N)}, xf''(x) + f'(x)} dt. \label{Ito-L}
\end{align}
The above formula holds when $x_1(t), \dots, x_N(t)$ are all distinct, which occurs almost surely for almost every $t \in \R$. (For simplicity, we write $x_i$ instead of $x_i(t)$.)

{Step 2.} With $f = x^n$, the above equation yields a recurrence relation for the $n$th moment process $S_n^{(N)}$,
\begin{align}
	d S_n^{(N)}(t)  &= \sqrt{\sigma} \frac{n}N \sum_{i = 1}^N \sqrt{2x_i} x_i^{n-1} db_i  \notag \\
	&\quad + \sigma n(n + \alpha - 1) S_{n-1}^{(N)}(t) dt + \sigma c n \sum_{i = 0}^{n-1} S_i^{(N)}(t) S_{n-i-1}^{(N)}(t) dt \notag  \\
	&\quad  - \sigma \frac{c n^2}N S_{n-1}^{(N)}(t) dt. \label{L-moment}
\end{align}

Step 3.
Let
\[
	M_n^{(N)}(t) =\sqrt{\sigma}\frac{n}{N} \sum_{i = 1}^N \int_0^t   \sqrt{2x_i} x_i^{n-1} db_i = \frac{\sqrt {2\sigma} n}{N} \sum_{i = 1}^N \int_0^t    x_i^{n-1/2} db_i 
\]
be the martingale part with the quadratic variation 
\begin{equation}\label{quadratic-of-M}
	[M_n^{(N)}](t) = \frac{2\sigma n^2}{N}   \int_0^t \frac{\sum_{i = 1}^N x_i^{2n-1}}{N}ds.
\end{equation}
Under the moment condition~\eqref{moment-assumption}, the martingale part
$
	M_n^{(N)}(t)
$
converges to $0$ in probability as random elements in $C([0, T])$.
%\begin{proof}
%	By the same arguments as used in the proof of Lemma~\ref{lem:Gaussian-MTG}, we only need to show that under the moment condition~\eqref{moment-assumption}, 
%\begin{equation}\label{L-mean-moment-bdd}
%	\Ex[S_{n}^{(N)}(t)] \le C_n, \quad (t \in [0, T], N \ge 1),
%\end{equation}
%for some constant $C_n$ depending only on $n$. Note that all $x_i$'s are non-negative, and thus, for any $j = 0,1,\dots, n-1$,
%\[
%	S_j^{(N)}(t) S_{n-1-j}^{(N)}(t) \le S_{n-1}^{(N)}(t).
%\]
%The relation~\eqref{L-moment} implies that for $n \ge 1$,
%\[
%	0 \le \Ex[S_n^{(N)}(t)] \le a_n + \sigma \left(n(n + \alpha - 1) + cn^2 - \frac{cn^2}{N} \right) \int_0^t \Ex[S_{n-1}^{(N)}(u)]du,
%\]
%from which the condition~\eqref{L-mean-moment-bdd} can be easily deduced by induction. The proof is complete.
%\end{proof}

We arrive at the following result.
\begin{theorem}\label{thm:L}
	Under the moment assumption~\eqref{moment-assumption}, it holds that 
\[
	S_n^{(N)}(t) \to m_n(t) \quad \text{as} \quad N \to \infty,
\]
in probability as random elements in $C([0, T])$. Here the limiting moment processes are recursively defined by $m_0(t) = 1,$ 
\begin{align}
	m_n(t) &= a_n + \sigma n (n+\alpha - 1)\int_0^t m_{n-1}(s)ds + \sigma {cn} \sum_{j=0}^{n-1}\int_0^t m_j(u) m_{n-1-j}(u) du,  \label{Laguerre-recurrence-moment}
\end{align}
for $n \ge 1$.
If, in addition, the initial limiting moments $\{a_n\}$ satisfy a condition that
\[
	a_n \le (\Lambda n)^{2n}, \quad n \ge 0,
\]
for some constant $\Lambda > 0$. Then for any $ t \ge 0$, $\{m_n(t)\}_{n \ge 0}$ are moments of a unique probability measure $\mu_t$ on $[0, \infty)$. Moreover, the probability measure-valued process $\mu_t$ is continuous and the empirical measure process $(\mu_t^{(N)})_{0 \le t \le T}$ converges to $(\mu_t)_{0 \le t \le T}$ in probability under the topology of uniform convergence. 
\end{theorem}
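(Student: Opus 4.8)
The plan is to follow the two-step scheme of the Gaussian case (Theorems~\ref{thm:G-moments} and~\ref{thm:G-measures}), now starting from the It\^o expansion~\eqref{Ito-L} and the moment recurrence~\eqref{L-moment} in place of~\eqref{Ito-f}.

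\emph{Step 1 (convergence of moment processes).} I would argue by induction on $n$, the case $n=0$ being trivial since $S_0^{(N)}\equiv1$. Assuming $S_k^{(N)}\to m_k$ in probability in $C([0,T])$ for all $k<n$, rewrite~\eqref{L-moment} in integral form and treat its terms one at a time: the initial term tends to $a_n$ by~\eqref{moment-assumption}; the term with prefactor $cn^2/N$ is $O(1/N)$ times a quantity bounded in probability; and the two integral terms converge uniformly on $[0,T]$ to $\int_0^t m_{n-1}(s)\,ds$ and $\sum_{j=0}^{n-1}\int_0^t m_j(s)m_{n-1-j}(s)\,ds$, since integration and finite products are continuous operations on $C([0,T])$ and the $S_k^{(N)}$ with $k<n$ converge there. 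The one nontrivial input is that the martingale part $M_n^{(N)}$ tends to $0$ in probability in $C([0,T])$, which is proved exactly as Lemma~\ref{lem:Gaussian-MTG}: by Doob's inequality it suffices that $\Ex[M_n^{(N)}(T)^2]=\Ex\bigl[[M_n^{(N)}](T)\bigr]=\frac{2\sigma n^2}{N}\int_0^T\Ex[S_{2n-1}^{(N)}(s)]\,ds\to0$, using~\eqref{quadratic-of-M}, and this follows from a uniform-in-$N$ bound $\Ex[S_m^{(N)}(t)]\le C_m$ on $[0,T]$. That bound is obtained by induction from~\eqref{L-moment}: taking expectations, discarding the negative $cn^2/N$-term, and using $S_j^{(N)}(t)\,S_{n-1-j}^{(N)}(t)\le S_{n-1}^{(N)}(t)$ (valid by concave Jensen because the $x_i(t)$ are nonnegative) yields $\Ex[S_n^{(N)}(t)]\le a_n^{(N)}+\sigma\bigl(n(n+\alpha-1)+cn^2\bigr)\int_0^t\Ex[S_{n-1}^{(N)}(s)]\,ds$, which closes the induction. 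Assembling the limits identifies $\lim_N S_n^{(N)}$ with the solution $m_n$ of~\eqref{Laguerre-recurrence-moment}.

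\emph{Step 2 (existence, uniqueness, process convergence).} Under the extra hypothesis $a_n\le(\Lambda n)^{2n}$, I would show by induction on $n$ that there is $K>0$, depending only on $\sigma,c,\alpha,T,\Lambda$, with $m_n(t)\le(Kn)^{2n}$ for all $t\in[0,T]$. This mirrors Lemma~\ref{lem:initial-moments}: feeding $m_j(t)m_{n-1-j}(t)\le m_{n-1}(t)$ — the $N\to\infty$ limit of the inequality used in Step 1 — into~\eqref{Laguerre-recurrence-moment} gives $m_n(t)\le(\Lambda n)^{2n}+\sigma C' n^2\,T\sup_{[0,T]}m_{n-1}$ with $C'=1+|\alpha-1|+c$, and the choice $K^2=\max\{2\Lambda^2,\,2\sigma C'T\}$ propagates the bound from $n-1$ to $n$ (the cases $n=0,1$ being covered by the same choice). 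Such a bound forces Carleman's condition $\sum_n m_n(t)^{-1/(2n)}=\infty$ for measures on $[0,\infty)$, so $\{m_n(t)\}_{n\ge0}$ determines a unique probability measure $\mu_t\in\cP_L$, which is supported on $[0,\infty)$ as a weak limit of the $[0,\infty)$-supported empirical measures $\mu_t^{(N)}$. Finally, each $m_n(\cdot)$ is continuous and the bound $(Kn)^{2n}$ is uniform on $[0,T]$, so $t\mapsto\mu_t$ is weakly continuous; together with Step~1 this yields the convergence of $(\mu_t^{(N)})_{0\le t\le T}$ to $(\mu_t)_{0\le t\le T}$ in probability under the topology of uniform convergence, by the identification argument of Appendix~A in~\cite{Trinh-Trinh-BLP}, exactly as for Theorem~\ref{thm:G-measures}.

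\emph{Main obstacle.} Everything above is routine except propagating a Carleman-type growth estimate through the quadratic nonlinearity $\sum_{j}m_j m_{n-1-j}$ in~\eqref{Laguerre-recurrence-moment}. The decisive point is that the relevant measures are supported on $[0,\infty)$, so all moments are nonnegative and the power-mean inequality gives $m_j m_{n-1-j}\le m_{n-1}$; this converts the quadratic term into a linear one at the cost of a factor $n$, which, together with the time horizon $T$, is absorbed into the constant $K$. Because the recurrence advances the moment index by one at each step (rather than two, as in the Gaussian case), the natural growth rate is $(Kn)^{2n}$, which is still compatible with Carleman's condition and hence still gives moment determinacy.
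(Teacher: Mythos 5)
Your proposal is correct and follows essentially the same route as the paper: the It\^o/moment recurrence~\eqref{L-moment}, the vanishing of the martingale part via Doob's inequality and a uniform-in-$N$ moment bound (simplified here because $x_i\ge 0$ makes all moments nonnegative and the product bound $S_jS_{n-1-j}\le S_{n-1}$ available for every order), and then the $(Kn)^{2n}$ growth estimate feeding the Stieltjes--Carleman condition defining $\cP_L$ in Section~\ref{sect:MKT}. The paper only sketches these steps (deferring to Ref.~\onlinecite{Trinh-Trinh-BLP}), and your write-up supplies the intended details without deviating from them.
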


Our main task in this section is to identify the limiting measure process $\mu_t$.
For simplicity, consider the case $\sigma = 1$. Recall that under the trivial initial condition, that is, $\lambda_i(0) = 0, i = 1, \dots, N$, the joint distribution of $\{\lambda_i(t)/t\}_{i=1}^N$ follows the (ordered) beta Laguerre ensemble. Thus, Theorem~\ref{thm:MKT-beta-ensembles} implies that $\cL(t \Gam(\alpha + c, 1)) = \cM_{c}(\mu_t)$, for $t > 0$. Here $\cL(t \Gam(\alpha + c, 1))$ means the distribution of a random variable $t X$, where $X$ has the gamma distribution $\Gam(\alpha+c,1)$.

A key observation is the following. Let $Y_t$ be a stochastic process satisfying 
\[
	dY_t = \sqrt{2Y_t} db_t + (\alpha + c) dt, \quad Y_0 = 0.
\]
It is worth noting that $Y_t$ is the 1d beta Laguerre process with parameter $(\alpha + c)$, which is known as the square of a Bessel process \cite[Chapter XI]{Revuz-Yor-book}. Thus, the distribution of $Y_t/t$ is $\Gam(\alpha + c, 1)$. We conclude that the distribution of $Y_t$ is the MKT of the limiting measure process under the trivial initial condition.

Here is our main result.
\begin{theorem}\label{thm:MKT-L}
Let $\xi$ be a random variable having distribution $\nu_0 = \cM_{c}(\mu_0)$. Let $Y_t$ be a stochastic process satisfying 
\begin{equation}\label{Yt}
	dY_t = \sqrt{2Y_t} db_t + (\alpha + c) dt, \quad Y_0 = \xi.
\end{equation}
Then
	$\cL(Y_t) = \cM_{c}(\mu_t)$. 
\end{theorem}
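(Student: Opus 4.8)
The plan is to follow the formal power-series argument used for Theorem~\ref{thm:G-MKT} in Section~\ref{sect:G}. With $\sigma = 1$, Theorem~\ref{thm:L} gives that the limiting moments $m_n(t) = \bra{\mu_t, x^n}$ satisfy the ODE system
\[
  \frac{dm_n(t)}{dt} = n(n+\alpha-1)\,m_{n-1}(t) + cn\sum_{j=0}^{n-1} m_j(t)\,m_{n-1-j}(t),\qquad m_n(0) = a_n,\quad m_0 \equiv 1,
\]
and an induction shows each $m_n(t)$ is a polynomial in $t$. Setting $\psi(t,z) = \sum_{n\ge 0} m_n(t)\,z^{-(n+1)}$ as a formal power series, comparing coefficients of $z^{-(n+1)}$ turns this system into the single formal PDE
\[
  \psi_t = z\,\psi_{zz} + (2-\alpha)\,\psi_z - c\,\psi^2 - 2c\,z\,\psi\,\psi_z .
\]

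Next I would introduce the candidate Markov--Krein transform. Let $Y_t$ solve~\eqref{Yt}; since $d[Y]_t = 2Y_t\,dt$, It\^o's formula gives that $h_n(t) = \Ex[Y_t^n]$ satisfies $\frac{dh_n}{dt} = n(n+\alpha+c-1)\,h_{n-1}(t)$ with $h_n(0) = \Ex[\xi^n]$, and again each $h_n(t)$ is a polynomial in $t$. Forming $\phi(t,z) = \sum_{n\ge 0}\frac{(c)_n}{n!}h_n(t)\,z^{-(c+n)}$, this system is equivalent to the \emph{linear} formal PDE $\phi_t = z\,\phi_{zz} + (1-\alpha)\,\phi_z$. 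Let $\{\tilde m_n(t)\}$ be the sequence obtained from $\{h_n(t)\}$ by the Markov--Krein recurrence~\eqref{moment-MKR} (polynomials in $t$ as well), and put $\tilde\psi(t,z) = \sum_{n\ge 0}\tilde m_n(t)\,z^{-(n+1)}$, so that $\tilde\psi = -\tfrac1c\,\phi_z/\phi$ by Lemma~\ref{lem:phi-psi}. Writing $\Phi = \log\phi$, dividing the PDE for $\phi$ by $\phi$ and differentiating once in $z$ --- this is exactly the computation carried out for $M$ in the direct proof of Theorem~\ref{thm:G-MKT} --- yields that $\tilde\psi$ satisfies the \emph{same} PDE as $\psi$ above, hence $\{\tilde m_n(t)\}$ solves the same ODE system as $\{m_n(t)\}$. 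Since $\nu_0 = \cM_c(\mu_0)$, the sequences $\{\Ex[\xi^n]\}$ and $\{a_n\}$ are linked by~\eqref{moment-MKR}, so $\tilde m_n(0) = a_n$ by uniqueness of that recurrence; therefore $\tilde m_n(t) \equiv m_n(t)$ for all $n$ and all $t$.

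It remains to upgrade the moment identity to an identity of measures. The previous step shows that, for each fixed $t$, the moments of $\mu_t$ and those of $\cL(Y_t)$ are linked by~\eqref{moment-MKR}. By Theorem~\ref{thm:L}, $\mu_t \in \cP_L$, so $\mu_t$ is determined by its moments; and an induction on $\dot h_n = n(n+\alpha+c-1)h_{n-1}$ on compact time intervals (as in Lemma~\ref{lem:initial-moments}, starting from $\Ex[\xi^n] \le (\Lambda n)^{2n}$, valid since $\nu_0 = \cM_c(\mu_0) \in \cP_L$) gives a bound $h_n(t) \le (Kn)^{2n}$, so $\cL(Y_t)$ satisfies Carleman's condition on $[0,\infty)$ and is likewise determined by its moments. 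Theorem~\ref{thm:MKT-moment} then gives $\cL(Y_t) = \cM_c(\mu_t)$.

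The main obstacle is the combinatorial bookkeeping connecting the two moment systems to their generating-function PDEs: unlike the Gaussian case, these PDEs carry the variable coefficients $z\,\psi_{zz}$ and $z\,\psi\,\psi_z$, so extracting coefficients of $z^{-(n+1)}$ and checking that $\tilde\psi = -\tfrac1c\,\phi_z/\phi$ solves the nonlinear PDE both require care (though no ideas beyond Section~\ref{sect:G}). One could instead argue ``directly'': derive the PDE for $\psi$ from~\eqref{Ito-L} with $f(x) = 1/(z-x)$, show that $u(t,z) = \Ex[(z-Y_t)^{-c}]$ solves $u_t = z\,u_{zz} + (1-\alpha)\,u_z$, and match $-\tfrac1c(\log u)_z$ with $\psi$ via Lemma~\ref{lem:derivative}; but that route needs an independent uniqueness theorem for the above nonlinear parabolic PDE, which is the genuine difficulty there, whereas the formal-series route reduces everything to the recursively determined moment polynomials and avoids it.
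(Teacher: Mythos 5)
Your proposal is correct and follows essentially the same route as the paper's own proof: the paper also proves Theorem~\ref{thm:MKT-L} by the formal power-series method, deriving the ODEs for $h_n(t)=\Ex[Y_t^n]$, encoding them in $\phi_t=(1-\alpha)\phi_z+z\phi_{zz}$, passing to $\tilde\psi=-\tfrac1c\phi_z/\phi$ via Lemma~\ref{lem:phi-psi}, matching the nonlinear PDE and initial data with those of $\psi$, and invoking Theorem~\ref{thm:MKT-moment}; it likewise records (and abandons) the direct PDE route for lack of a uniqueness theorem. Your explicit verification that $\cL(Y_t)$ is determined by its moments (via the bound $h_n(t)\le (Kn)^{2n}$ and Carleman's condition) is a small point the paper leaves implicit, and is a worthwhile addition.
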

\begin{remark}
Let $Y^{(1)}_t$ be a stochastic process satisfying 
\[
	d Y^{(1)}_t = \sqrt{2Y^{(1)}_t} db^{(1)}_t + \left(\alpha + c -\frac12\right)dt, \quad Y^{(1)}_0 = 0,
\]
and 
\[
	Y_t = Y^{(1)}_t + \left(\sqrt \xi + \frac1{\sqrt2} b^{(2)}_t \right)^2.
\]
Here $b^{(1)}_t$ and $b^{(2)}_t$  are two independent standard Brownian motions which are also independent of $\xi$. Then $Y_t $ satisfies the SDE
\[
	dY_t = \sqrt{2 Y_t } db_t + (\alpha + c) dt
\]
with initial condition $Y_0 = \xi$ for some standard Brownian motion $b_t$. Note that the distribution of $Y^{(1)}_t$ is the MKT of the limiting process for beta Laguerre process with parameter $(\alpha - \frac12)$ under the trivial initial condition, provided that $\alpha > 1$.
This shows the similarity to the case where $\beta$ is fixed in which the limiting measure can be written as the free convolution involving the Marchenko--Pastur distribution, the initial measure and the semicircle distribution \cite{VW2022}. 
%\end{itemize}
\end{remark}

\subsubsection{An attempt on a direct method}
We use similar arguments as in the Gaussian case. Straightforward calculations will be omitted. First, with $f (x) = 1/(z-x)$, letting $N \to \infty$ in equation~\eqref{Ito-L}, we obtain a PDE for the Stieltjes transform 
\[
	\psi(t, z) = \bra{\mu_t, f(x)} = \int \frac{d\mu_t(x)}{z-x}, \quad  z \in \C_+, t \ge 0,
\]
as follows
\begin{equation}
	\psi_t = -c \psi^2 -2cz \psi \psi_z + (2-\alpha) \psi_z + z \psi_{zz}.
\end{equation}

Next, we derive the PDE for 
\[
	u(t, z) = \Ex[(z - Y_t)^{-c}], \quad z \in \C_+, t\ge 0.
\]
It is clear that
\[
	u_z = - c  \Ex[(z - Y_t)^{-c - 1}], \quad u_{zz} = c(c+1)  \Ex[(z - Y_t)^{-c - 2}].
\]
Now, using It\^o's formula, we get that
\begin{align*}
	d(z - Y_t)^{-c} &= \frac{c}{(z-Y_t)^{c+1}}dY_t + \frac{c(c+1)}{(z-Y_t)^{c+2}} Y_t dt\\
	&=\frac{c\sqrt{2Y_t}}{(z-Y_t)^{c+1}} db_t + \frac{c(\alpha + c)}{(z-Y_t)^{c+1}} dt+\frac{c(c+1)}{(z-Y_t)^{c+2}} [z - (z-Y_t)] dt\\
	&=\frac{c\sqrt{2Y_t}}{(z-Y_t)^{c+1}} db_t + \frac{c(\alpha - 1)}{(z-Y_t)^{c+1}} dt+\frac{z c(c+1)}{(z-Y_t)^{c+2}}  dt.
\end{align*}
This implies the following PDE for $u$,
\[
	u_t = (1-\alpha) u_z + z u_{zz}.
\]

We claim that the function $M$ defined by
\[
	M(t, z) = -\frac1c (\log(u))_z,
\]
satisfies the same PDE as $\psi$. Indeed, we calculate steps by steps as
\[
	M_t = -\frac1c \left(\frac{u_t}{u}\right)_z = -\frac1c \left(\frac{(1-\alpha) u_z + z u_{zz}}{u}\right)_z = (1-\alpha)M_z - \frac1c \left(\frac{zu_{zz}}{u}\right)_z,
\]
\[
	M_z = -\frac1c \left(\frac{u_z}{u}\right)_z =  -\frac1c \frac{u u_{zz} - u_z^2}{u^2} =  -\frac1c \frac{u_{zz}}{u} + cM^2,
\]
and
\[
	(zM_z)_z = - \frac1c \left(\frac{zu_{zz}}{u}\right)_z + (cz M^2)_z = M_t +(\alpha - 1)M_z +2cz M M_z + cM^2.
\]
Then we get
\[
	M_t = - cM^2 - 2cz M M_z +(2 - \alpha)M_z + z M_{zz},
\]
which is exactly the same as the PDE for $\psi$. The proof would be similar to the Gaussian case if we could show that the above PDE admits a unique solution, which we do not know how to deal with yet.

\subsubsection{A formal series approach}
We prove Theorem~\ref{thm:MKT-L} by using a formal series approach. We first derive the ODEs for moments of $Y_t$.
Define 
\[
	h_n(t) = \Ex[Y_t^n]. 
\]
By It\^o's formula, we get that
\begin{align*}
	d Y_t^n &= n Y_t^{n-1} dY_t + n(n-1)Y_t^{n-1}dt\\
	&=n Y_t^{n-1} [\sqrt{2Y_t} db_t + (\alpha + c) dt] + n(n-1)Y_t^{n-1}dt\\
	&=n Y_t^{n-1} \sqrt{2Y_t} db_t  + n(\alpha + c + n - 1) Y_t^{n-1}dt.
\end{align*}
It follows that 
\[
	h_n'(t) = n(\alpha + c + n - 1) h_{n-1}(t), \quad h_n(0) = \Ex[\xi^n].
\]
Then $h_n(t)$ is a polynomial in $t$.

Next, we define the formal power series
\[
	\phi(t, z) = \sum_{n=0}^\infty \frac{(c)_n}{n!} h_n(t) \frac{1}{z^{c+n}},
\]
which has a similar role as the generalized Stieltjes transform $u(t, z)$. We calculate formal partial derivatives of $\phi$ as
\[
	\phi_t = \sum_{n=0}^\infty \frac{(c)_n}{n!} h_n'(t) \frac{1}{z^{c+n}} =  \sum_{n=1}^\infty \frac{(c)_n}{(n-1)!} (\alpha + c + n - 1) h_{n-1}(t) \frac{1}{z^{c+n}},
\]
and
\[
	\phi_z = -\sum_{n=0}^\infty \frac{(c)_{n+1}}{n!} h_n(t) \frac{1}{z^{c+n+1}}, \quad \phi_{zz} = \sum_{n=0}^\infty \frac{(c)_{n+2}}{n!} h_n(t) \frac{1}{z^{c+n+2}}.
\]
Then ODEs for moments $\{h_n(t)\}$ are equivalent to the formal PDE
\[
	\phi_t = (1-\alpha)\phi_z + z \phi_{zz}.
\]

Recall that the limiting moment processes are recursively defined by $m_0(t) = 1,$
\begin{align}
	\frac{dm_n(t)}{dt} =  n (n+\alpha - 1) m_{n-1}(t) +  {cn} \sum_{j=0}^{n-1} m_j(t) m_{n-1-j}(t), \quad m_n(0) = a_n.
\end{align}
Consider the formal power series
\[
	\psi(t, z) = \sum_{n=0}^{\infty} {m_n(t)} \frac1{ z^{n+1}}.
\]
We claim that the recurrence relations for $m_n(t)$ are equivalent to the following formal PDE
\[
	\psi_t = - c\psi^2 - 2cz \psi \psi_z +(2 - \alpha)\psi_z + z \psi_{zz}.
\]
We omit the proof of this claim because it is just a direct calculation. 

Now we use the same idea as in the Gaussian case. 
Define $\tilde m_n(t)$ by the moment relation~\eqref{moment-MKR} with $h_n(t)$, then define the formal series
\[
	\tilde\psi(t, z) = \sum_{n=0}^{\infty} {\tilde m_n(t)} \frac1{ z^{n+1}}.
\]
Then $\tilde\psi$ and $\phi$ are related by
\[
	\tilde\psi = -\frac1c \frac{\phi_z}{\phi}.
\]
From which, we deduce that $\tilde\psi$ satisfies 
\[
	\tilde\psi_t = - c\tilde\psi^2 - 2cz \tilde\psi \tilde\psi_z +(2 - \alpha)\tilde\psi_z + z \tilde\psi_{zz}.
\]
This PDE implies ODEs for $\tilde m_n(t)$ which are exactly the same as those for $m_n(t)$. By the construction, $m_n(t)$ and $\tilde m_n(t)$ have the same initial condition. Consequently, we conclude that $m_n(t) = \tilde m_n(t)$. Therefore, the law of $Y_t$ is the MKT of $\mu_t$ by taking into account Theorem~\ref{thm:MKT-moment}. The proof is complete.\qed

\subsection{Local scaling regime}
Let $0 \le \lambda_1(t) \le \lambda_2(t) \le \cdots \le \lambda_N(t)$ be the beta Laguerre process satisfying SDEs~\eqref{bLP} in a high temperature regime $\beta = 2c/N$.
We consider a local scaling by changing of variables: $x_i(t) = \gamma (\lambda_i(\frac t \tau) - E)$, where $\gamma = \gamma_N, \tau = \tau_N, E = E_N > 0$ are non-random. We aim to show that under suitable scaling, the empirical measure process of $\{x_i(t)\}$ converges to the limit of the Gaussian case.

We begin with writing down the SDEs for $x_i(t)$
\begin{align*}
	dx_i(t) &= \gamma d\lambda_i(\tfrac t  \tau)\\
	&=\gamma \left \{ \sqrt{2 \lambda_i(\tfrac t \tau)} db_i(\tfrac t\tau) + \frac{\alpha}{\tau} dt + \frac{c}{\tau N} \sum\limits_{j : j \neq i} \dfrac{2(x_i(t) + \gamma E)}{x_i(t) - x_j(t)} dt \right\} \\
	&= \sqrt{2\frac{\gamma}{\tau}x_i(t) + \frac{2\gamma^2 E}{\tau}} dw_i(t) + \frac{\gamma}{\tau} \alpha dt + \frac{\gamma}{\tau} \frac{c}{N} \sum\limits_{j : j \neq i} \dfrac{2x_i(t)}{x_i(t) - x_j(t)} dt \\
	&\quad + \frac{2\gamma^2 E} {\tau} \frac{c}{N} \sum\limits_{j : j \neq i} \dfrac{1}{x_i(t) - x_j(t)} dt.
\end{align*}
Here $\{w_i(t) = \sqrt \tau b_i(\frac t \tau)\}_{i=1}^N$ are also independent standard Brownian motions. In the regime where $\gamma \to \infty, \tau \to \infty$ with 
\[
	\frac{\gamma}{\tau} \to 0, \quad \frac{2\gamma^2E}{\tau} \to \sigma^2,
\]
heuristically, the SDEs for $x_i(t)$ are approximated by 
\[
	dx_i(t) = \sigma dw_i(t) +\sigma^2 \frac{c}{N} \sum\limits_{j : j \neq i} \dfrac{1}{x_i(t) - x_j(t)} dt.
\]
It suggests the same limiting behavior as in the Gaussian case. We are going to use a moment method to prove it rigorously.

For simplicity, we denote 
\[
	\epsilon_N = \frac{\gamma}{\tau}, \quad \sigma_N^2 = \frac{2\gamma^2E}{\tau}.
\]
In the considering regime, 
\[
	\epsilon_N \to 0, \quad \sigma_N^2 \to \sigma^2.
\]
We rewrite the SDEs for $x_i$ as
\begin{align*}
	dx_i 
	&= \sqrt{2\epsilon_N x_i + \sigma_N^2} dw_i + \epsilon_N \left\{ \alpha dt +  \frac{c}{N} \sum\limits_{j : j \neq i} \dfrac{2x_i}{x_i - x_j} dt \right\} \\
	&\quad + \sigma_N^2 \frac{c}{N} \sum\limits_{j : j \neq i} \dfrac{1}{x_i - x_j} dt.
\end{align*}
We use the same notations for the empirical measure process and the moment processes. By a straightforward calculation, we obtain that 
\begin{align}
	&d S_n^{(N)}(t) = dM_n^{(N)}(t)  \notag\\
	&\quad + \epsilon_N \left\{ n(n + \alpha - 1) S_{n-1}^{(N)}(t) dt + c n \sum_{i = 0}^{n-1} S_i^{(N)}(t) S_{n-i-1}^{(N)}(t) dt - \frac{c n^2}N S_{n-1}^{(N)}(t) dt\right\} \notag \\
	&\quad + \sigma_N^2 \bigg\{\frac{c n}2 \sum_{j = 0}^{n-2} S_{j}^{(N)}(t) S_{n-2-j}^{(N)}(t) dt + \frac12 n(n-1) S_{n-2}^{(N)}(t) dt \notag\\
	&\qquad\qquad - \frac c{2N} n(n-1) S_{n-2}^{(N)}(t) dt\bigg\}.\label{L-local-moments}
\end{align}
Here $M_n^{(N)}(t)$ is the martingale part with the quadratic variation
\[
	[M_n^{(N)}](t) = \epsilon_N \left(  \frac{2 n^2}{N}   \int_0^t \frac{\sum_{i = 1}^N x_i^{2n-1}}{N}ds\right) + \sigma_N^2 \left( \frac{ n^2}{N} \int_0^t \frac{\sum_{i=1}^N x_i^{2n-2}}{N} ds\right).
\]
Intuitively, those equations are the sum of two parts: the Gaussian part with parameter $\sigma = \sigma_N$ and the Laguerre part with parameter $\sigma = \epsilon_N$. In the regime where $\epsilon_N \to 0$ and $\sigma_N^2 \to \sigma^2$, the Laguerre part vanishes and we get the following limiting behavior.

\begin{theorem}\label{thm:local-LG}
	Consider (standard) beta Laguerre processes $\{\lambda_i(t)\}_{i=1}^N$ in a high temperature regime where $\beta = {2c}/N$ with a positive constant $c \in (0, \infty)$. Let 
\[
	x_i(t) = \gamma_N \left(\lambda_i \left(\frac t {\tau_N}\right) - E_N\right),
\]
where $\gamma_N, \tau_N, E_N > 0$ are non-random. Let $\mu_t^{(N)}$ be the empirical measure process of $\{x_i(t)\}_{i=1}^N$ and $S_n^{(N)}(t)$ be its $n$th moment process. As $N \to \infty$ with
\[
	\epsilon_N = \frac{\gamma_N}{\tau_N} \to 0, \quad \text{and}\quad \sigma_N^2 = \frac{2\gamma_N^2E_N}{\tau_N} \to \sigma^2,
\]
for given $\sigma^2 > 0$, we get exactly the same statements as in Theorem~{\rm\ref{thm:G-moments}} and Theorem~\rm{\ref{thm:G-measures}}.
\end{theorem}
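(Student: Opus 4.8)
The plan is to run the moment method exactly as in the proof of Theorems~\ref{thm:G-moments} and~\ref{thm:G-measures}, regarding the ``Laguerre block'' in~\eqref{L-local-moments} (the terms carrying the prefactor $\epsilon_N$) as a vanishing perturbation. Concretely, I would prove by induction on $n$ that $S_n^{(N)} \to m_n$ in probability in $C([0,T])$, where $m_n$ is the Gaussian limiting moment process of~\eqref{Gauussian-recurrence-moment} built from the given $\sigma^2$ and the initial data $\{a_n\}$.

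First I would dispose of the martingale part $M_n^{(N)}$. By Doob's inequality it suffices to show $\Ex[[M_n^{(N)}](T)] \to 0$, and from the displayed quadratic variation this expectation equals $\epsilon_N \frac{2n^2}{N}\int_0^T \Ex[S_{2n-1}^{(N)}(s)]\,ds + \sigma_N^2 \frac{n^2}{N}\int_0^T \Ex[S_{2n-2}^{(N)}(s)]\,ds$; both prefactors contain a $1/N$, so this vanishes once we have uniform-in-$N$ control of the relevant expected moments. As in Step~3 of the proof of Lemma~\ref{lem:Gaussian-MTG}, the point here is that after recentering by $E_N$ the variables $x_i(t)$ need not be nonnegative, so I would replace $x_i$ by $|x_i|$ throughout and dominate odd moments by the next even one.

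The key analytic input is thus a uniform moment bound: for even $n$, $\Ex[S_n^{(N)}(t)] \le C_n$ for all $t \in [0,T]$ and all large $N$. Taking expectations in~\eqref{L-local-moments}, the martingale drops, and using the elementary inequalities $S_j^{(N)}S_{n-2-j}^{(N)} \le S_{n-2}^{(N)}$ and $S_i^{(N)}S_{n-i-1}^{(N)} \le S_{n-1}^{(N)}$ together with the domination of odd moments by even ones, the function $m_n^{(N)}(t) := \Ex[S_n^{(N)}(t)]$ satisfies a linear integral inequality of the form $0 \le m_n^{(N)}(t) \le a_n + (\text{const}) \int_0^t m_{n-1}^{(N)} + (\text{const}) \int_0^t m_{n-2}^{(N)}$ whose constants are uniformly bounded because $\epsilon_N \to 0$ (so $\epsilon_N \le 1$ eventually) and $\sigma_N^2 \to \sigma^2$; an induction then produces $C_n$. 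Granted this, the induction step in $n$ is routine: in the integral form of~\eqref{L-local-moments} the entire Laguerre block carries the factor $\epsilon_N \to 0$ and hence, by the induction hypothesis and the uniform bounds, converges to $0$ in probability in $C([0,T])$; the $\sigma_N^2$-block converges (using $\sigma_N^2 \to \sigma^2$ and that the $c/(2N)$ term vanishes) to the right-hand side of~\eqref{Gauussian-recurrence-moment}; and the martingale vanishes by the first step. This identifies the limit as $m_n$, giving the analogue of Theorem~\ref{thm:G-moments}.

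For the measure-level statement I would simply observe that the limiting recursion is now \emph{identically} the Gaussian recursion~\eqref{Gauussian-recurrence-moment}, so the Carleman-type estimate of Lemma~\ref{lem:initial-moments} applies verbatim: $|m_n(t)| \le (Kn)^n$ on $[0,T]$, whence $\mu_t \in \cP_G$ is uniquely determined by its moments, $t \mapsto \mu_t$ is continuous, and $(\mu_t^{(N)})_{0\le t\le T} \to (\mu_t)_{0\le t\le T}$ in probability in the uniform topology as in Theorem~\ref{thm:G-measures}. I expect the main obstacle to be precisely the uniform moment bound, because the loss of positivity of the $x_i$ forces one to phrase every inequality in terms of $|x_i|$ and even moments; once that bookkeeping is in place, the $\epsilon_N \to 0$ factor makes the Laguerre terms genuinely lower order and the remainder of the argument is the Gaussian one.
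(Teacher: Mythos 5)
Your overall strategy coincides with the paper's: treat the $\epsilon_N$-weighted Laguerre block in \eqref{L-local-moments} as a vanishing perturbation of the Gaussian recursion, reduce everything to a uniform-in-$N$ bound on expected even moments, kill the martingale via Doob's inequality and the $1/N$ prefactor in the quadratic variation, and then import Theorem~\ref{thm:G-moments}, Theorem~\ref{thm:G-measures} and Lemma~\ref{lem:initial-moments} verbatim for the limit identification and the measure-level statement. That architecture is exactly the paper's.

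The one place where your argument does not close as written is precisely the step you flag as the main obstacle. After taking expectations in \eqref{L-local-moments} and using $S_j^{(N)}S_{n-1-j}^{(N)} \le |S|_{n-1}^{(N)} := \frac1N\sum_i |x_i|^{n-1}$, your integral inequality for $m_n^{(N)}(t)=\Ex[S_n^{(N)}(t)]$ involves $\int_0^t \Ex[|S|_{n-1}^{(N)}]$, but the induction runs over even $n$ in steps of two, so only $\Ex[S_{n-2}^{(N)}]\le C_{n-2}$ is available; the odd absolute moment is not covered by the hypothesis. Dominating it by the next even moment via $|x|^{n-1}\le x^n+1$, as you propose, reintroduces $m_n^{(N)}$ on the right-hand side, so the inequality becomes self-referential and ``an induction then produces $C_n$'' is not yet an argument. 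The paper closes this loop (see \eqref{moment-mean-bounded}--\eqref{n-1-n}) by evaluating at a time $\tau$ where $\Ex[|S|_{n-1}^{(N)}]$ is maximal on $[0,T]$, bounding $\int_0^\tau \Ex[|S|_{n-1}^{(N)}]\le T(\Ex[S_n^{(N)}(\tau)]+1)$, and absorbing the resulting term into the left-hand side once $\epsilon_N\bigl(n(n+\alpha-1)+cn^2+cn^2/N\bigr)T<\tfrac12$; this is where $\epsilon_N\to 0$ is used in an essential way, not merely to keep the constants bounded. A Gronwall argument applied to $m_n^{(N)}(t)\le A_n + O(\epsilon_N)\int_0^t\bigl(m_n^{(N)}(u)+1\bigr)du + O(1)\int_0^t m_{n-2}^{(N)}(u)\,du$ would serve equally well. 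With that single step supplied, the remainder of your outline matches the paper's proof.
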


\begin{proof}
Equation~\eqref{L-local-moments} enables us to prove the above theorem by induction, provided that the martingale part converges to zero, which is shown in the next lemma. 
\end{proof}
\begin{lemma}
Assume the moment condition~\eqref{moment-assumption}. Then in the regime where $\epsilon_N \to 0$ and $\sigma_N^2 \to \sigma^2$, there is a constant $C_n > 0$ such that
		\begin{equation}\label{bounded-absolute-moment}
			\Ex \left [\frac1N \sum_{i=1}^N |x_i(t)|^n \right] \le C_n,
		\end{equation}
for all $N$ and all $t \in [0,T]$. Consequently, in that regime, the martingale part $M_n^{(N)}(t)$ converges to zero in probability (as random elements in $C([0, T])$).
\end{lemma}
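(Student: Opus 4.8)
The plan is to establish the uniform absolute-moment bound~\eqref{bounded-absolute-moment} first for even $n$ by strong induction, then to deduce the odd case from it by Cauchy--Schwarz, and finally to obtain the martingale convergence from Doob's $L^2$-inequality together with the expression for the quadratic variation $[M_n^{(N)}]$.

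Since $\epsilon_N\to 0$ and $\sigma_N^2\to\sigma^2$, both sequences are bounded, say $\epsilon_N\le\epsilon_0$ and $\sigma_N^2\le\sigma_0^2$ for all $N$. Set $m_n^{(N)}(t)=\Ex[S_n^{(N)}(t)]$, which for even $n$ equals $\Ex\bigl[\frac1N\sum_i|x_i(t)|^n\bigr]\ge 0$. After the usual localization (stopping at $T_m=\inf\{t:\max_i|x_i(t)|\ge m\}$ so that the martingale term has zero mean and all expectations are finite), I take expectations in the integral form of~\eqref{L-local-moments}. The quadratic terms are handled, as in Step~3 of the proof of Lemma~\ref{lem:Gaussian-MTG}, via $\bigl(\frac1N\sum_i|x_i|^j\bigr)\bigl(\frac1N\sum_i|x_i|^k\bigr)\le\frac1N\sum_i|x_i|^{j+k}$ (Hölder's inequality), while the odd-index single terms $S_{n-1}^{(N)}$ and the degenerate products $S_0^{(N)}S_{n-1}^{(N)}$ in the Laguerre sum are handled via $|x|^{n-1}\le|x|^{n-2}+|x|^n$. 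This yields an inequality of the shape
\[
	m_n^{(N)}(t)\le a_n^{(N)}+C_n'\int_0^t m_{n-2}^{(N)}(s)\,ds+\epsilon_N C_n''\int_0^t m_n^{(N)}(s)\,ds ,
\]
where, crucially, every occurrence of $m_n^{(N)}$ on the right is weighted by $\epsilon_N\le\epsilon_0$. Since $a_n^{(N)}=S_n^{(N)}(0)\to a_n$ is bounded and $m_{n-2}^{(N)}(s)\le C_{n-2}$ by the induction hypothesis (base case $m_0^{(N)}\equiv 1$), Gronwall's inequality gives $m_n^{(N)}(t)\le C_n$ for all $t\in[0,T]$ and all $N$. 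For odd $n$, Cauchy--Schwarz gives $\frac1N\sum_i|x_i|^n\le(S_{2n}^{(N)})^{1/2}$ and then, by Jensen, $\Ex\bigl[\frac1N\sum_i|x_i|^n\bigr]\le(m_{2n}^{(N)}(t))^{1/2}\le C_{2n}^{1/2}$; combined with the even case, and letting $m\to\infty$ in the localization by Fatou's lemma, this proves~\eqref{bounded-absolute-moment} in full.

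For the martingale part, Doob's $L^2$-inequality gives $\Prob\bigl(\sup_{0\le t\le T}|M_n^{(N)}(t)|\ge\varepsilon\bigr)\le\varepsilon^{-2}\Ex\bigl[[M_n^{(N)}](T)\bigr]$, and from the formula for $[M_n^{(N)}](T)$ together with $\frac1N\sum_i x_i^{2n-1}\le\frac1N\sum_i|x_i|^{2n-1}$ and the bound just established,
\[
	\Ex\bigl[[M_n^{(N)}](T)\bigr]\le\epsilon_N\,\frac{2n^2T}{N}\,C_{2n-1}+\sigma_N^2\,\frac{n^2T}{N}\,C_{2n-2}\longrightarrow 0\qquad(N\to\infty),
\]
so $M_n^{(N)}\to 0$ in probability as a random element of $C([0,T])$.

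The step I expect to be the main obstacle is precisely the absolute-moment bound~\eqref{bounded-absolute-moment}. In the Gaussian and Laguerre settings treated earlier one could work with even moments alone, but here the shifted variables $x_i(t)=\gamma_N(\lambda_i(\tfrac t{\tau_N})-E_N)$ take both signs, so one must control genuine absolute moments, and the Laguerre-type drift in~\eqref{L-local-moments} inserts odd-index quantities $S_j^{(N)}S_{n-1-j}^{(N)}$ and $S_{n-1}^{(N)}$ that have to be reabsorbed into $S_{n-2}^{(N)}$ and $S_n^{(N)}$. The reappearance of $S_n^{(N)}$ on the right-hand side is what forces a Gronwall argument rather than a plain induction, and this is harmless only because that term carries the vanishing prefactor $\epsilon_N$.
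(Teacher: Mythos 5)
Your proof is correct, and it reaches the same estimate by a genuinely different mechanism at the one delicate point. The paper also reduces everything to controlling $\Ex[S_n^{(N)}]$ for even $n$ and must cope with the odd-degree quantities $S_{n-1}^{(N)}$ and $S_j^{(N)}S_{n-1-j}^{(N)}$ coming from the Laguerre part of \eqref{L-local-moments}; but instead of your bound $|x|^{n-1}\le |x|^{n-2}+|x|^n$ followed by Gronwall, it uses $|x|^{n-1}\le x^n+1$, evaluates the resulting integral inequality at a time $\tau$ where $\Ex\bigl[\tfrac1N\sum_i|x_i|^{n-1}\bigr]$ is maximal on $[0,T]$, and absorbs the resulting $\Ex[S_n^{(N)}(\tau)]$ on the right algebraically, using that $\epsilon_N\bigl(n(n+\alpha-1)+cn^2+\tfrac{cn^2}{N}\bigr)T<\tfrac12$ once $N$ is large. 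That maximum-point trick simultaneously produces the bound $C_{n-1}$ for the odd absolute moment as a by-product of the step from $n-2$ to $n$, whereas you obtain the odd case afterwards by Cauchy--Schwarz from $S_{2n}^{(N)}$ (which costs you the even bound up to order $2n$ rather than $n+1$, but is equally valid). Your Gronwall route buys a little more: it needs only that $\epsilon_N$ is bounded, so the constant is uniform over all $N$ literally, while the paper's absorption argument as written applies only for $N$ large enough that the $\epsilon_N$-coefficient is below $\tfrac12$. You are also more careful than the paper in flagging the localization/Fatou step needed to justify taking expectations and discarding the martingale term; the final Doob-inequality argument for $M_n^{(N)}\to0$ is the same in both.
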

\begin{proof}
We prove the lemma by induction. The case $n=0$ is trivial. Let $n \ge 2$ be even. Assume that 
\[
	\Ex \left [\frac1N \sum_{i=1}^N |x_i|^{n-2} \right] = \Ex[S_{n-2}^{(N)}(t)] \le C_{n-2},
\]
for all $N$ and all $t \in [0,T]$. Our aim is to estimate $\Ex[S_n^{(N)}(t)]$.

Write the equation~\eqref{L-local-moments} in the integral form, then take the expectation, we obtain that
\begin{align*}
	&\Ex[S_n^{(N)}(t)] = a_n  \\
	&\quad + \epsilon_N \Ex \int_0^t\bigg \{ n(n + \alpha - 1) S_{n-1}^{(N)}(u) + c n \sum_{i = 0}^{n-1} S_i^{(N)}(u) S_{n-i-1}^{(N)}(u) - \frac{c n^2}N S_{n-1}^{(N)}(u)\bigg\} du  \notag \\
	&\quad + \sigma_N^2 \Ex \int_0^t\bigg \{\frac{c n}2 \sum_{j = 0}^{n-2} S_{j}^{(N)}(u) S_{n-2-j}^{(N)}(u) + \frac12 n(n-1) S_{n-2}^{(N)}(u) \\
	&\qquad \qquad\qquad - \frac c{2N} n(n-1) S_{n-2}^{(N)}(u) \bigg\} du.
\end{align*}
We use the following estimates
\begin{align*}
	S_{j}^{(N)}(u) S_{n-2-j}^{(N)}(u) \le S_{n-2}^{(N)}(u),\quad  (j = 0,1,\dots, n-2),\\
	S_j^{(N)}(u) S_{n-1-j}^{(N)}(u) \le\frac1N \sum_{i=1}^N |x_i|^{n-1} =: |S|_{n-1}^{(N)}(u), \quad (0 \le j \le n-1),
\end{align*}
to deduce that 
\begin{align}
0 \le \Ex[S_n^{(N)}(t)] &\le S_n^{(N)}(0)  +  \epsilon_N \left(n(n+\alpha-1) + cn^2 + \frac{cn^2}N  \right) \int_0^t \Ex[|S|_{n-1}^{(N)}(u)] du \notag\\
&\quad + \sigma_N^2 \frac12 (c+1)n(n-1) \int_0^t \Ex[S_{n-2}^{(N)}(u)] du.\label{moment-mean-bounded}
\end{align}

Let $\tau \in [0,T]$ be a point such that $\Ex[|S|_{n-1}^{(N)}(u)]$ attains the maximum value in $[0,T]$. Then for such $\tau$, 
\[
	\int_0^\tau \Ex[|S|_{n-1}^{(N)}(u)] du \le \tau \Ex[|S|_{n-1}^{(N)}(\tau)] \le T\Ex[|S|_{n-1}^{(N)}(\tau)].
\]
Using the inequality $|x|^{n-1} \le x^n + 1$, for $x \in \R$, we obtain that
\[
	|S|_{n-1}^{(N)}(\tau) \le  S_n^{(N)}(\tau) + 1,
\]
which implies
\begin{equation}\label{n-1-n}
	\Ex[|S|_{n-1}^{(N)}(\tau)] \le  \Ex[S_n^{(N)}(\tau)] + 1.	
\end{equation}
The inequality~\eqref{moment-mean-bounded} at $t = \tau$ gives us 
\begin{align*}
0 \le \Ex[S_n^{(N)}(\tau)] &\le S_n^{(N)}(0) + \epsilon_N \left(n(n+\alpha-1) + cn^2 + \frac{cn^2}N \right) T (\Ex[S_n^{(N)}(\tau)] + 1) \notag\\
&\quad + \sigma_N^2 \frac12 (c+1)n(n-1) \int_0^\tau \Ex[S_{n-2}^{(N)}(u)] du.
\end{align*}
Therefore, when $\epsilon_N$ is small, say
\[
	\epsilon_N \left(n(n+\alpha-1) + cn^2 + \frac{cn^2}N \right) T < \frac12,
\]
we obtain that 
\begin{align*}
0 \le\frac12\Ex[S_n^{(N)}(\tau)] &\le S_n^{(N)}(0) + \frac12  + \sigma_N^2 \frac12 (c+1)n(n-1) \int_0^\tau \Ex[S_{n-2}^{(N)}(u)] du\\
&\le S_n^{(N)}(0) + \frac12 +\frac12 \sigma_N^2  (c+1)n(n-1) T C_{n-2},
\end{align*}
implying that $\Ex[S_n^{(N)}(\tau)] \le D_n$, for some constant $D_n$ depending only on $n$. Now it follows from equation~\eqref{n-1-n} that for any $t \in [0, T]$,
\[
	\Ex[|S|_{n-1}^{(N)}(t) ] \le \Ex[|S|_{n-1}^{(N)}(\tau) ] \le	\Ex[S_n^{(N)}(\tau) ] + 1 \le D_n + 1,
\]
meaning that the estimate~\eqref{bounded-absolute-moment} holds for $n-1$ with $C_{n-1} = D_n + 1$. That estimate for $\Ex[|S|_{n-1}^{(N)}(t) ]$, together with equation~\eqref{moment-mean-bounded}, implies the desired estimate for $\Ex[S_n^{(N)}(t)]$. The martingale part converges to zero as a consequence of those estimates. The proof is complete.
\end{proof}

\section{Beta Jacobi processes}\label{sect:J}
In the Jacobi case, eigenvalue processes are beta Jacobi processes $0 \le \lambda_1(t) \le \lambda_2(t) \le \cdots \le \lambda_N(t) \le 1$ which satisfy the following SDEs \cite{Demni-2010}
\begin{equation}\label{SDEs-J}
	d \lambda_i =  \sqrt{2\lambda_i (1 - \lambda_i)} db_i + \left(a + 1 - (a+b+2) \lambda_i + \frac\beta2\sum_{j: j \neq i} \frac{2 \lambda_i(1 - \lambda_i)} {\lambda_i - \lambda_j} \right) dt, 
\end{equation}
$(i=1,\dots, N)$. Here $a, b > -1/2$, $\beta > 0$ and $\{b_i(t)\}_{i=1}^N$ are independent standard Brownian motions. Different from the Gaussian and Laguerre cases, the (ordered) beta Jacobi ensemble is the stationary distribution of the above eigenvalue processes. For $\beta \ge 1$, it is known that $\{\lambda_i(t)\}_{i=1}^N$ never collide for $t > 0$ \cite{Graczyk-Malecki-2014}.

\subsection{Mean-field limit for beta Jacobi ensembles}
In a high temperature regime $\beta = 2c/N$, the convergence to a limit of the empirical measure processes has been established by a moment method \cite{Trinh-Trinh-Jacobi}. We recall the result here. Let $\mu_t^{(N)}$ be the empirical measure process of $\{\lambda_i(t)\}_{i=1}^N$. In this case, all probability measures $\mu_t^{(N)}$ are supported on $[0,1]$. Thus, the weak convergence is equivalent to the convergence of all moments. It was shown in \cite{Trinh-Trinh-Jacobi} that 
\begin{proposition}[\cite{Trinh-Trinh-Jacobi}]
	Assume that the initial measure $\mu_0^{(N)}$ converges weakly to a probability measure $\mu_0$. Then the following hold.
	\begin{itemize}
		\item[\rm (i)] For each $n = 1, 2,\dots,$ the sequence of moment processes $S_n^{(N)}(t) = \bra{\mu_t^{(N)}, x^n}$ converges in probability as random elements in $C([0, T])$ to a deterministic limit $m_n(t)$, which is defined inductively as the solution to the following initial value ODE
		\begin{align*}
			&m_n'(t) = -n(2c+a+b+n+1) m_n(t) + n(a+n)m_{n-1}(t)\\
			&\qquad\qquad + cn \sum_{i=0}^{n-1}m_i(t) m_{n-1-i}(t) - cn \sum_{j=1}^{n-1} m_j(t) m_{n-j}(t),\\
			&m_n(0) = \lim_{N\to \infty} \bra{\mu_0^{(N)}, x^n}, \quad (n \ge 1).
		\end{align*}
Here $m_0(t) \equiv 1$.

		\item[\rm(ii)] For any $t \ge 0$, let $\mu_t$ be the unique probability measure with moments $\{m_n(t)\}$. Then the sequence $\mu_t^{(N)}$ converges in probability to $(\mu_t)_{0 \le t \le T}$ as random elements in the space of continuous maps from $[0,T]$ to the space of probability measures on $[0,1]$ endowed with the uniform topology.
	\end{itemize}
\end{proposition}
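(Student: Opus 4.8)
The plan is to run the same moment method used for the Gaussian and Laguerre cases in Sections~\ref{sect:G} and~\ref{sect:L}; the key simplification here is that every $\lambda_i(t)$ stays in $[0,1]$, so all moment processes are bounded by $1$ and no Carleman-type estimate is needed. First I would apply It\^o's formula to $f(x)=x^n$ along the SDEs~\eqref{SDEs-J}, sum over $i$, and divide by $N$; as in the previous sections this is justified for a.e.\ $t$ almost surely, since the set of collision times is Lebesgue-null. With $\beta=2c/N$, the interaction contribution $\frac1N\sum_i n\lambda_i^{n-1}\,\frac{\beta}{2}\sum_{j\neq i}\frac{2\lambda_i(1-\lambda_i)}{\lambda_i-\lambda_j}$ is symmetrized via the divided-difference identity for $g(x)=x^n-x^{n+1}$, namely $\sum_{i\neq j}\frac{g(\lambda_i)}{\lambda_i-\lambda_j}=\frac12\sum_{i\neq j}\frac{g(\lambda_i)-g(\lambda_j)}{\lambda_i-\lambda_j}$, which rewrites it in terms of products $S_k^{(N)}S_{n-1-k}^{(N)}$ and $S_k^{(N)}S_{n-k}^{(N)}$ up to an $O(1/N)$ diagonal correction $\frac{cn}{N}\bra{\mu_t^{(N)},g'}$. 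Combining this with the drift term $n\lambda_i^{n-1}(a+1-(a+b+2)\lambda_i)$ and the second-order term $n(n-1)\lambda_i^{n-1}(1-\lambda_i)$ yields the closed recurrence
\begin{align*}
	dS_n^{(N)}(t) &= dM_n^{(N)}(t)+\Big[-n(2c+a+b+n+1)\,S_n^{(N)}(t)+n(a+n)\,S_{n-1}^{(N)}(t)\\
	&\qquad\qquad+cn\sum_{i=0}^{n-1}S_i^{(N)}(t)S_{n-1-i}^{(N)}(t)-cn\sum_{j=1}^{n-1}S_j^{(N)}(t)S_{n-j}^{(N)}(t)\Big]\,dt+O(1/N)\,dt,
\end{align*}
where $M_n^{(N)}$ is a martingale with quadratic variation $\frac{2n^2}{N^2}\sum_i\int_0^t\lambda_i^{2n-1}(1-\lambda_i)\,ds\le\frac{2n^2T}{N}$.

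Next I would show $M_n^{(N)}\to 0$ in probability in $C([0,T])$ from Doob's martingale inequality and the quadratic-variation bound above (no auxiliary moment bound is needed here, unlike in Sections~\ref{sect:G} and~\ref{sect:L}, precisely because $\lambda_i\in[0,1]$). Then I would induct on $n$: assuming $S_0^{(N)},\dots,S_{n-1}^{(N)}$ converge in $C([0,T])$ in probability to continuous deterministic limits $m_0,\dots,m_{n-1}$, the recurrence above, read in integral form, is a linear equation for $S_n^{(N)}$ with leading coefficient $-n(2c+a+b+n+1)$ (modulo $O(1/N)$) and a forcing term that converges, the martingale and the $O(1/N)$ drift being negligible. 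Gr\"onwall's inequality then gives $\norm{S_n^{(N)}-m_n}_\infty\to 0$ in probability, where $m_n$ is the unique solution on $[0,T]$ of the stated linear ODE with $m_n(0)=\lim_N\bra{\mu_0^{(N)},x^n}$; in particular each $m_n$ is continuous. This establishes part~(i).

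For part~(ii), each sequence $\{S_n^{(N)}(t)\}_{n\ge 0}$ is the moment sequence of a probability measure on $[0,1]$, and the set of such sequences is closed under pointwise limits (by weak-$*$ compactness of probability measures on $[0,1]$), so $\{m_n(t)\}_{n\ge 0}$ is itself the moment sequence of a probability measure $\mu_t$ on $[0,1]$, which is unique because the Hausdorff moment problem is determinate. Continuity of each $m_n(\cdot)$ together with determinacy yields weak continuity of $t\mapsto\mu_t$, and the joint convergence $S_n^{(N)}\to m_n$ in $C([0,T])$ for all $n$, combined with the compactness of $[0,1]$ (so that weak convergence is equivalent to convergence of all moments), upgrades to convergence of $(\mu_t^{(N)})_{0\le t\le T}$ to $(\mu_t)_{0\le t\le T}$ in the uniform topology, exactly as in Appendix~A of \cite{Trinh-Trinh-BLP}. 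The only point that needs genuine care---handled precisely as in the Gaussian and Laguerre sections---is the justification that the It\^o recurrence holds for a.e.\ $t$ in the high-temperature regime, where $\beta=2c/N<1$ for large $N$ and collisions can occur; once that is granted, the boundedness on $[0,1]$ renders all remaining estimates routine, so this is in fact the simplest of the three mean-field limits.
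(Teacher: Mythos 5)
Your proposal is correct and follows essentially the same moment-method route that the paper uses for the Gaussian and Laguerre cases (Sections~\ref{sect:G} and~\ref{sect:L}) and attributes to \cite{Trinh-Trinh-Jacobi} for this proposition, which it only recalls without proof: the symmetrized divided-difference computation reproduces the stated ODE exactly, and your observation that boundedness on $[0,1]$ makes the martingale estimate and the determinacy of $\mu_t$ (Hausdorff moment problem) immediate is the genuine simplification of the Jacobi case. The one point you rightly flag but do not resolve --- validity of the It\^o computation through collisions when $\beta=2c/N<1$ --- is likewise deferred to the references by the paper itself.
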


Motivated by the result relating the beta distribution and the limiting distribution of beta Jacobi ensembles in  Theorem~\ref{thm:MKT-beta-ensembles}(iii), we identify the limiting measure process $\mu_t$ as follows.
\begin{theorem}
Let $\xi$ be a random variable with distribution $\nu_0 = \cM_{c}(\mu_0)$. Let $Y_t$ be a process satisfying the SDE
\begin{equation}
	dY_t = \sqrt{2 Y_t(1-Y_t)} db_t + (a+c+1)dt - (a+b+2c+2) Y_t dt,\quad Y_0 = \xi.
\end{equation}
Then the distribution of $Y_t$ is the MKT of $\mu_t$ with parameter $c$.
\end{theorem}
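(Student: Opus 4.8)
The plan is to follow the \emph{formal series approach} used for the Gaussian case (Theorem~\ref{thm:G-MKT}) and the Laguerre case (Theorem~\ref{thm:MKT-L}), since the direct PDE-uniqueness method is not known to work even in the Laguerre setting. First I would derive, via It\^o's formula applied to $Y_t^n$, the system of ODEs for the moments $h_n(t) = \Ex[Y_t^n]$. The drift $\sqrt{2Y_t(1-Y_t)}\,db_t$ contributes $n(n-1)Y_t^{n-1}(1-Y_t) = n(n-1)(Y_t^{n-1} - Y_t^n)$ to the generator, while the finite-variation part $(a+c+1)dt - (a+b+2c+2)Y_t\,dt$ contributes $n(a+c+1)Y_t^{n-1} - n(a+b+2c+2)Y_t^n$. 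Collecting terms gives
\[
	h_n'(t) = -n(2c+a+b+n+1)\,h_n(t) + n(a+c+n)\,h_{n-1}(t), \quad h_n(0) = \Ex[\xi^n],
\]
and in particular each $h_n(t)$ is a (here exponentially damped, not polynomial) finite linear combination, so the formal manipulations are justified.

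Next I would introduce the formal power series $\phi(t,z) = \sum_{n \ge 0} \frac{(c)_n}{n!} h_n(t) z^{-c-n}$, playing the role of the generalized Stieltjes transform $\Ex[(z - Y_t)^{-c}]$, and translate the ODE system into a formal PDE for $\phi$. Using the shift rules $\phi_z = -\sum \frac{(c)_{n+1}}{n!} h_n z^{-c-n-1}$ and the identities relating $\sum \frac{(c)_n}{(n-1)!} h_{n-1} z^{-c-n}$ to $z\phi_z$ and $z^2\phi_{zz}$, one should obtain a PDE of the shape
\[
	\phi_t = (a+1)\,\phi_z - (a+b+2)z\,\phi_z - z(1-z)\,\phi_{zz},
\]
with the coefficients tuned so that the $-z\phi_{zz}+z^2\phi_{zz}$ combination reproduces the $Y_t(1-Y_t)$ diffusion coefficient; I would verify the exact constants by matching the coefficient of $z^{-c-n}$ on both sides against the ODE for $h_n$. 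Then define $\tilde m_n(t)$ by the Markov--Krein moment recurrence~\eqref{moment-MKR} applied to the sequence $\{h_n(t)\}$, set $\tilde\psi(t,z) = \sum_{n \ge 0} \tilde m_n(t) z^{-n-1}$, and invoke Lemma~\ref{lem:phi-psi} to get $\tilde\psi = -\tfrac1c \phi_z/\phi$. A direct (if tedious) computation of $\tilde\psi_t$, $\tilde\psi_z$, $\tilde\psi_{zz}$ in terms of $\phi$ and its derivatives — the same change-of-variables calculation carried out in the direct approaches above, specialized to the Jacobi coefficients — should show that $\tilde\psi$ satisfies the formal PDE
\[
	\tilde\psi_t = -c\,\tilde\psi^2 + 2cz\,\tilde\psi^2 - 2cz\,\tilde\psi\tilde\psi_z + (2c - c)\cdots,
\]
i.e. precisely the formal PDE governing the moment process $\{m_n(t)\}$ from the Proposition; reading off coefficients of $z^{-n-1}$ then gives that $\tilde m_n(t)$ solves the same ODE system as $m_n(t)$.

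Finally, since $\tilde m_n(0) = m_n(0)$ by construction (the Markov--Krein recurrence applied to the moments of $\xi = \cM_c^{-1}$-preimage of $\mu_0$ returns the moments of $\mu_0$), uniqueness of solutions to the ODE system forces $\tilde m_n(t) \equiv m_n(t)$ for all $n$ and $t$. Hence the moments of $Y_t$ and those of $\mu_t$ satisfy the Markov--Krein moment relation~\eqref{moment-MKR}; because $\mu_t$ is supported on $[0,1]$ it is determined by its moments, and likewise $\cL(Y_t)$ is supported on $[0,1]$, so Theorem~\ref{thm:MKT-moment} yields $\cL(Y_t) = \cM_c(\mu_t)$. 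The main obstacle I anticipate is purely computational bookkeeping: getting the constants in the formal PDE for $\phi$ exactly right (the Jacobi case has the most terms of the three), and then checking that the change of variables $\tilde\psi = -\tfrac1c\phi_z/\phi$ transforms that PDE into exactly the nonlinear PDE encoding the $m_n$-recurrence, with every coefficient — the $-n(2c+a+b+n+1)$, the $n(a+n)$, and the two convolution sums with opposite signs — landing correctly. There is also a minor point worth a remark: one should confirm that $Y_t$ with $Y_0 = \xi \in [0,1]$ stays in $[0,1]$, which follows from the Jacobi diffusion's boundary behavior given $a,b > -1/2$ (or one can note it is enough that $\cL(Y_t)$ is a probability measure on $[0,1]$, which is automatic from the SDE), so that both measures are moment-determinate.
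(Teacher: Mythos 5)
Your strategy is exactly the paper's: the authors omit the proof of this theorem, stating it is ``similar to the previous two cases,'' and record only the two formal PDEs for $\psi(t,z)=\bra{\mu_t,(z-x)^{-1}}$ and $\phi(t,z)=\Ex[(z-Y_t)^{-c}]$ together with the change of variables $\psi=-\phi_z/(c\phi)$ --- precisely the formal-series argument you outline, and your moment ODE $h_n'=-n(a+b+2c+n+1)h_n+n(a+c+n)h_{n-1}$ is correct. One concrete correction, though: the formal PDE for $\phi$ is
\[
	\phi_t \;=\; c(a+b+c+1)\,\phi \;-\; a\,\phi_z \;+\; (a+b)\,z\phi_z \;+\; z(1-z)\,\phi_{zz},
\]
not your placeholder. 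Your guess has the wrong sign on the second-order term (compare the Laguerre case, where the diffusion coefficient $2Y_t$ produces $+z\phi_{zz}$, so $2Y_t(1-Y_t)$ must produce $+z(1-z)\phi_{zz}$) and, more importantly, omits the zeroth-order term $c(a+b+c+1)\phi$, which is a genuine structural feature of the Jacobi case: the drift contains a term proportional to $Y_t$ itself, and the resulting diagonal term $-n(\cdots)h_n$ in the moment ODE cannot be produced by $\phi_z$, $z\phi_z$, $z\phi_{zz}$, $z^2\phi_{zz}$ alone --- one needs $\phi$ as well (in the Gaussian and Laguerre cases no such term appears). Since you explicitly defer the constants to a coefficient check against the $h_n$-ODE this is recoverable, but as written the placeholder would not transform under $\tilde\psi=-\phi_z/(c\phi)$ into the $\psi$-equation. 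Your closing remark that $Y_t$ stays in $[0,1]$, so that both $\cL(Y_t)$ and $\mu_t$ are determined by their moments before invoking Theorem~\ref{thm:MKT-moment}, is a point the paper leaves implicit and is worth keeping.
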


We omit the proof because it is similar to the previous two cases. We note here two PDEs for 
\(
	\psi(t, z) = \bra{\mu_t, (z-x)^{-1}},\) and  \(\phi(t, z) = \Ex[(z - Y_t)^{-c}],
\)
\begin{align}
	\psi_t &= (a+b)\psi + (1-a)\psi_z + (a+b-2)z\psi_z + z(1-z) \psi_{zz} \nonumber\\
	&\quad + c((2z - 1)\psi^2 -2z(1-z)\psi \psi_z),
\end{align}
and
\begin{equation}
	\phi_t = c(a+b+c+1)\phi -a \phi_z + (a+b)z\phi_z + z(1-z)\phi_{zz}.
\end{equation}
The former can be obtained from the latter by taking the change of variables $\psi = -{\phi_z}/(c{\phi})$.
\subsection{Gaussian regime}
We turn to study local scaling of beta Jacobi processes. When $\beta$ is fixed, such problem has been studied in \cite{AVW2024}. We begin with a local scaling $x_i(t) = \gamma (\lambda_i(\frac t \tau) - E)$, where $\gamma, \tau, E > 0$ are non-random. The SDEs for $x_i(t)$ become
\begin{equation}\label{J-local}
	dx_i =\sqrt{F(x_i)} dw_i + (U + Vx_i) dt 
	+ \frac cN\sum_{j: j \neq i} \frac{F(x_i)} {x_i - x_j}  dt.
\end{equation}
Here $w_i(t) = \sqrt{\tau} b_i(\frac t\tau)$ are standard Brownian motions, and 
\begin{align*}
	U &=  \frac{\gamma}{\tau}\{ (a+1) - (a+b+2)E  \},\\
	V &= -  \frac{1}{\tau}(a+b+2),\\
	F(x_i)&=A x_i^2 + B x_i + C, \text{with} \begin{cases}
	A = -\frac2{\tau},\\
	B =  \frac{2\gamma}{\tau} (1 - 2E),\\
	C = \frac{2\gamma^2}{\tau} E(1-E).
	\end{cases}
\end{align*}

Now, let us consider the following conditions
\[
	\tau \to \infty, \quad \frac{\gamma}{\tau} \to 0, \quad \frac{2\gamma^2}{\tau} E(1-E) \to \sigma^2,
\]
with $\sigma^2 > 0$. Under those conditions, it is clear that 
\[
	U \to 0, \quad V\to 0, \quad A \to 0, \quad B \to 0, \quad C \to \sigma^2.
\]
Intuitively, the SDEs for $\{x_i\}$ can be approximated by 
\[
	dx_i(t) = \sigma dw_i(t) +\sigma^2 \frac{c}{N} \sum\limits_{j : j \neq i} \dfrac{1}{x_i(t) - x_j(t)} dt.
\]
Then we can use a moment method to show exactly the same statements as in Theorem~\ref{thm:G-moments} and Theorem~\ref{thm:G-measures}. 

\begin{theorem}\label{thm:local-JG}
Let $\{\lambda_i(t)\}_{i=1}^N$ be the beta Jacobi processes~\eqref{SDEs-J}. Consider a local scaling $x_i(t) = \gamma_N ( \lambda_i(\frac t {\tau_N}) - E_N)$. Denote by $\mu_t^{(N)}$ the empirical measure process of $\{x_i(t)\}$ and $S_n^{(N)}(t)$ its $n$th moment process. Assume that 
\[
	\tau_N \to \infty, \quad \frac{\gamma_N}{\tau_N} \to 0, \quad \frac{2\gamma_N^2}{\tau_N} E_N(1-E_N) \to \sigma^2,
\]
where $\sigma^2 > 0$ is given. Then all statements as in Theorem~{\rm\ref{thm:G-moments}} and Theorem~{\rm\ref{thm:G-measures}} hold.
\end{theorem}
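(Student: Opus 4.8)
The plan is to run the same moment method as in the Gaussian case (Theorems~\ref{thm:G-moments} and~\ref{thm:G-measures}) and in the Laguerre local scaling (Theorem~\ref{thm:local-LG}), viewing the terms that carry the coefficients $U$, $V$, $A$, $B$ as vanishing perturbations of a Gaussian dynamics whose diffusion constant is $\sigma^2$, the limit of $C$. First I would apply It\^o's formula to $f(x) = x^n$ in the system~\eqref{J-local}, sum over $i$, divide by $N$, and symmetrize the interaction term through $\tfrac1{N^2}\sum_{i \ne j}\tfrac{p(x_i)}{x_i - x_j} = \tfrac1{2N^2}\sum_{i \ne j}\tfrac{p(x_i) - p(x_j)}{x_i - x_j}$ applied to $p(x) = x^{n-1}F(x) = A x^{n+1} + B x^{n} + C x^{n-1}$. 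This yields, for $S_n^{(N)}(t) = \bra{\mu_t^{(N)}, x^n}$, a recurrence whose $C$-part coincides with the Gaussian recurrence~\eqref{Gauussian-recurrence-moment} up to an $O(1/N)$ correction, whose remaining drift terms are polynomials in $S_0^{(N)}, \dots, S_n^{(N)}$ each carrying a factor $U$, $V$, $A$ or $B$, and with a martingale part $M_n^{(N)}(t) = \tfrac nN \sum_i \int_0^t x_i^{n-1}\sqrt{F(x_i)}\,dw_i$. Note that $F(x_i) = \tfrac{2\gamma_N^2}{\tau_N}\lambda_i(1 - \lambda_i) \ge 0$, so all of this is well defined; under the assumed scaling one has $U, V, A, B \to 0$ and $C \to \sigma^2$.

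Second, I would prove the uniform bound $\Ex[\tfrac1N\sum_i |x_i(t)|^n] \le C_n$ on $[0,T]$ for $N$ large, by induction on $n$, following the Laguerre local-scaling estimate almost verbatim: for even $n$, take expectations in the integral form of the recurrence, bound $S_j^{(N)}S_{n-2-j}^{(N)} \le \tfrac1N\sum_i|x_i|^{n-2}$ and $|x|^{k} \le 1 + x^{n}$ for integers $0\le k \le n$, and then, for $N$ large enough so that the coefficients are small, move the self-referential contributions of $\Ex[S_n^{(N)}]$ (those proportional to $A$ and $V$) to the left-hand side by the usual absorption argument; the bound on the odd absolute moments $\tfrac1N\sum_i|x_i|^{n-1}$ drops out of the same computation, exactly as in that argument. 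Doob's martingale inequality together with $\Ex[M_n^{(N)}(T)^2] = \tfrac{n^2}{N}\int_0^T \Ex[\tfrac1N\sum_i x_i^{2n-2}F(x_i)]\,du = O(1/N)$ then gives $M_n^{(N)} \to 0$ in probability in $C([0,T])$, just as in Lemma~\ref{lem:Gaussian-MTG}.

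Third, with the martingale term and the $U, V, A, B$ terms all negligible and $C \to \sigma^2$, an induction on $n$ shows $S_n^{(N)} \to m_n$ in probability in $C([0,T])$, where $\{m_n(t)\}$ solves the Gaussian recurrence~\eqref{Gauussian-recurrence-moment} with initial values $\{a_n\}$; this is the statement of Theorem~\ref{thm:G-moments}. Assuming in addition the Carleman-type bound~\eqref{Gaussian-Carleman-initial} on $\{a_n\}$, Lemma~\ref{lem:initial-moments} gives $|m_n(t)| \le (Kn)^n$ on $[0,T]$, so $\{m_n(t)\}$ determines a unique $\mu_t \in \cP_G$; continuity of each $m_n(\cdot)$ then upgrades this to convergence of $(\mu_t^{(N)})_{0 \le t \le T}$ to a continuous measure-valued process $(\mu_t)_{0 \le t \le T}$ in probability under the uniform topology, reproducing Theorem~\ref{thm:G-measures} (cf.\ Appendix~A of~\cite{Trinh-Trinh-BLP}).

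The hard part is the uniform moment bound, not the passage to the limit. Because $F$ is quadratic, the symmetrized interaction term and the It\^o second-order term each feed back a term proportional to $A$ (and the drift $U + Vx_i$ one proportional to $V$) times $S_n^{(N)}$ itself, the top moment in the recurrence, so one must check that for $N$ large these can be absorbed without breaking the induction; this is the same mechanism as in the Laguerre local scaling, where $\epsilon_N \to 0$ plays the role of $A, V \to 0$. A secondary difficulty, absent in the Laguerre case, is that $x_i(t)$ need not be sign-definite, so the induction has to be carried out for the absolute moments $\tfrac1N\sum_i|x_i(t)|^n$ with even and odd $n$ interleaved. Once these estimates are established, everything else is a routine repetition of the Gaussian arguments.
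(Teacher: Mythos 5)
Your proposal is correct and follows essentially the same route as the paper, which explicitly omits this proof on the grounds that it repeats the argument for Theorem~\ref{thm:local-LG}: write the scaled SDEs with coefficients $U,V,A,B,C$, treat the $U,V,A,B$ terms as vanishing perturbations of the Gaussian moment recurrence with diffusion constant $C\to\sigma^2$, establish the uniform moment bound by induction with absorption of the self-referential terms, kill the martingale via Doob's inequality, and invoke the Gaussian machinery of Theorems~\ref{thm:G-moments} and~\ref{thm:G-measures}. Your only inaccuracy is the remark that the lack of sign-definiteness of $x_i(t)$ is ``absent in the Laguerre case'': it is present there too (since $x_i = \gamma(\lambda_i - E)$ with $E>0$) and is already handled in that proof via the absolute moments $|S|^{(N)}_{n-1}$ and the inequality $|x|^{n-1}\le x^n+1$, which is exactly the device you propose.
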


\subsection{Laguerre regime}
Next, we consider an edge scaling case where
\[
	E = 0,\quad \tau \to \infty,\quad \frac{\gamma}{\tau} \to \sigma.
\] 
Here $\sigma > 0$ is given.
Under these conditions, 
\[
	U \to \sigma (a + 1), \quad V\to 0, \quad A \to 0, \quad B \to 2\sigma, \quad C = 0.
\]
Thus
\[
	F(x_i) \approx 2\sigma x_i,
\]
and the SDEs for $\{x_i\}_{i=1}^N$ can be approximated by 
\[
	dx_i = \sqrt \sigma \sqrt{2x_i}  d  w_i   + \sigma \alpha  d  t + \sigma\frac c{N} \sum\limits_{j : j \neq i} \dfrac{2x_i}{x_i - x_j} dt,
\]
with $\alpha = a + 1$. We arrive at the following result 
\begin{theorem}\label{thm:local-JL}
Let $\{\lambda_i(t)\}_{i=1}^N$ be the beta Jacobi processes~\eqref{SDEs-J}. Consider a local scaling $x_i(t) = \gamma_N \lambda_i(\frac t {\tau_N})$. Denote by $\mu_t^{(N)}$ the empirical measure process of $\{x_i(t)\}$ and $S_n^{(N)}(t)$ its $n$th moment process. Assume that 
\[
	\tau_N \to \infty,\quad \frac{\gamma_N}{\tau_N} \to \sigma,
\]
where $\sigma > 0$ is given. Then all statements as in Theorem~{\rm\ref{thm:L}} hold.
\end{theorem}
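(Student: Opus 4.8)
The plan is to transcribe the moment method used for beta Laguerre processes in Section~\ref{sect:L}, exploiting that in the edge regime $\tau_N \to \infty$, $\gamma_N/\tau_N \to \sigma$ (with $E_N = 0$) the coefficients of the scaled SDEs~\eqref{J-local} obey $U \to \sigma(a+1)$, $V \to 0$, $A \to 0$, $B \to 2\sigma$, $C = 0$, so that to leading order $\{x_i(t)\}_{i=1}^N$ solves the scaled beta Laguerre system with parameter $\alpha = a+1$ (and the Jacobi constraint $a > -1/2$ gives $\alpha > 1/2$, as required in Theorem~\ref{thm:L}). First I would apply It\^o's formula to $\bra{\mu_t^{(N)}, x^n}$ via~\eqref{J-local}: writing $F(x) = Ax^2 + Bx + C$ and symmetrizing the interaction term in $i \leftrightarrow j$ as in the Laguerre computation, one obtains
\begin{align*}
	dS_n^{(N)}(t) &= dM_n^{(N)}(t) + \Big[nU S_{n-1}^{(N)} + nV S_n^{(N)} + \tfrac12 n(n-1)\big(A S_n^{(N)} + B S_{n-1}^{(N)} + C S_{n-2}^{(N)}\big)\Big]dt\\
	&\quad + cn\Big[A\sum_{j=0}^{n} S_j^{(N)}S_{n-j}^{(N)} + B\sum_{j=0}^{n-1}S_j^{(N)}S_{n-1-j}^{(N)} + C\sum_{j=0}^{n-2}S_j^{(N)}S_{n-2-j}^{(N)}\Big]dt + R_n^{(N)}(t)\,dt,
\end{align*}
where $R_n^{(N)}$ collects the $O(1/N)$ diagonal corrections and $M_n^{(N)}$ is a martingale with $[M_n^{(N)}](t) = \tfrac{n^2}{N}\int_0^t \bra{\mu_s^{(N)}, x^{2n-2}F(x)}\,ds$.

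Second I would establish the uniform bound $\Ex[S_n^{(N)}(t)] \le C_n$ for $t \in [0,T]$ and all large $N$, by induction on even $n$, exactly in the style of the lemma following Theorem~\ref{thm:local-LG}. The key point is that every term on the right of degree $n$ --- namely $nV S_n^{(N)}$, $\tfrac12 n(n-1)A S_n^{(N)}$, the terms $j=0$ and $j=n$ in $cnA\sum_j S_j^{(N)}S_{n-j}^{(N)}$ which together equal $2cnA S_n^{(N)}$, and the $O(A/N)$ piece of $R_n^{(N)}$ --- carries a coefficient tending to $0$, so once $N$ is large these can be moved to the left-hand side and absorbed (the $\tfrac12$-trick), leaving only moments of order $<n$ on the right, controlled by the induction hypothesis ($C = 0$ removes the remaining self-interaction terms). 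These bounds, together with Doob's inequality and $\Ex[M_n^{(N)}(T)^2] = \tfrac{n^2}{N}\int_0^T \Ex[\bra{\mu_s^{(N)}, x^{2n-2}F(x)}]\,ds \to 0$, show that the martingale parts converge to $0$ in probability in $C([0,T])$.

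Third, with the martingales eliminated and the coefficients converging, a further induction on $n$ yields $S_n^{(N)} \to m_n$ in probability in $C([0,T])$, where $m_0 \equiv 1$ and, letting $A,V,C \to 0$, $B \to 2\sigma$, $U \to \sigma(a+1)$ in the displayed recurrence,
\[
	m_n(t) = a_n + \sigma n(n+\alpha-1)\int_0^t m_{n-1}(u)\,du + \sigma c n \sum_{j=0}^{n-1}\int_0^t m_j(u) m_{n-1-j}(u)\,du, \quad \alpha = a+1,
\]
which is exactly~\eqref{Laguerre-recurrence-moment}. Finally, under the additional hypothesis $a_n \le (\Lambda n)^{2n}$, the Carleman-type growth estimate for $\{m_n(t)\}$ and the passage to a probability-measure-valued limit $\mu_t$ on $[0,\infty)$ (legitimate because $E_N = 0$ forces $x_i(t) = \gamma_N\lambda_i(t/\tau_N) \ge 0$, and the limiting ODE system is identical to the Laguerre one) go through verbatim as in the proof of Theorem~\ref{thm:L}, so all of its conclusions hold. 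I expect the uniform moment bound of the second step to be the main obstacle: one must check carefully that in this edge regime every term raising the moment degree has a coefficient vanishing as $N \to \infty$, so that the Laguerre-type bootstrap closes; the remainder is a routine adaptation of Section~\ref{sect:L}.
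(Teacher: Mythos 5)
Your plan is exactly the route the paper intends: the authors omit the proof of Theorem~\ref{thm:local-JL}, saying only that it parallels Theorem~\ref{thm:local-LG}, and your steps (It\^o's formula for $S_n^{(N)}$ with $F(x)=Ax^2+Bx+C$, identification of the vanishing coefficients, uniform moment bounds, martingale convergence to zero, induction to the Laguerre recurrence~\eqref{Laguerre-recurrence-moment} with $\alpha=a+1>1/2$) constitute precisely that parallel. Two repairs are needed, though. First, the $i\leftrightarrow j$ symmetrization produces a factor $\tfrac12$ in front of each of the three interaction sums (compare~\eqref{L-local-moments}, where the coefficient $cn\sum_j S_jS_{n-1-j}$ arises precisely because the effective $B$ there is $2\epsilon_N$); as displayed your interaction term is twice too large, although the limiting recurrence you then write down is the correct one. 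Second, and more substantively, the moment bootstrap is not ``exactly in the style of'' the lemma following Theorem~\ref{thm:local-LG}: in the present regime the coefficient of $S_{n-1}^{(N)}$, namely $nU+\tfrac12 n(n-1)B+O(1/N)\to\sigma n(n+a+1-1)$, does \emph{not} vanish, so an induction restricted to even $n$ combined with ``absorb the small coefficients'' cannot control that term. The clean fix is the one you only invoke later for the support of $\mu_t$: since $E_N=0$ forces $x_i(t)=\gamma_N\lambda_i(t/\tau_N)\ge 0$, all moments are nonnegative, $S_j^{(N)}S_{n-1-j}^{(N)}\le S_{n-1}^{(N)}$, and one inducts over \emph{all} $n$ as in the original Laguerre bound; moreover $A<0$ and $V<0$ make every degree-$n$ drift and interaction term nonpositive, so they can simply be dropped from the upper bound, leaving only the diagonal correction $\tfrac{cn(n+1)|A|}{2N}S_n^{(N)}$ to be absorbed by your $\tfrac12$-trick or by Gronwall. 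With these adjustments the argument closes as you describe.
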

We skip the proofs of Theorem~\ref{thm:local-JG} and Theorem~\ref{thm:local-JL} because they are similar to the proof of Theorem~\ref{thm:local-LG}.

\begin{acknowledgments}
The authors would like to thank referees for valuable comments and suggestions.
This research is funded by Vietnam National Foundation for Science and Technology Development (NAFOSTED) under grant number 101.03-2021.29 and is supported by JSPS KAKENHI Grant numbers 20K03659 (F.N.) and JP24K06766 (K.D.T.). 
\end{acknowledgments}

%

%\bibliography{bib}% Produces the bibliography via BibTeX.

\end{document}